\title{\vspace*{-1pc}%
       Curvature and Weitzenbock formula for the  Podle\'{s} quantum sphere}
\author{Bram Mesland\S$^*$, Adam Rennie\dag
\thanks{email: 
\texttt{b.mesland@math.leidenuniv.nl},\ \texttt{renniea@uow.edu.au}
}
\\[3pt]
\S Mathematisch Instituut, Universiteit Leiden, Netherlands
\\[3pt]
\dag School of Mathematics and Applied Statistics, University of Wollongong\\
Wollongong, Australia
}
\def\section{\@startsection{section}{1}{\z@}{-3.5ex plus -1ex minus
  -.2ex}{2.3ex plus .2ex}{\large\bf}}
\def\subsection{\@startsection{subsection}{2}{\z@}{-3.25ex plus -1ex
  minus -.2ex}{1.5ex plus .2ex}{\normalsize\bf}}
\numberwithin{equation}{section} 
\theoremstyle{plain} 
\newtheorem{introthm}{Theorem}
\newtheorem{thm}{Theorem}[section]
\newtheorem{lemma}[thm]{Lemma}
\newtheorem{prop}[thm]{Proposition}
\newtheorem{corl}[thm]{Corollary}
\theoremstyle{definition} 
\newtheorem{defn}[thm]{Definition}
\newtheorem{example}[thm]{Example} 
\theoremstyle{remark} 
\newtheorem{rmk}[thm]{Remark}
\DeclareMathOperator{\End}{End}   
\DeclareMathOperator{\Tr}{Tr}     
\newcommand{\A}{\mathcal{A}}  
\newcommand{\B}{\mathcal{B}}  
\newcommand{\C}{\mathbb{C}}   
\newcommand{\CDB}{\mathcal{C_\D(B)}} 
\newcommand{\D}{\mathcal{D}}  
\renewcommand{\d}{\mathrm{d}_\Psi} 
\newcommand{\dee}{\mathrm{d}} 
\renewcommand{\H}{\mathcal{H}}  
\newcommand{\N}{\mathbb{N}}   
\newcommand{\nablar}{\overrightarrow{\nabla}} 
\newcommand{\nablal}{\overleftarrow{\nabla}} 
\newcommand{\ox}{\otimes}     
\newcommand{\R}{\mathbb{R}}   
\newcommand{\X}{\mathcal{X}}  
\newcommand{\ketbra}[2]{\lvert#1\rangle\langle#2\rvert} 
\newcommand{\pairing}[2]{\langle#1\mathbin{|}#2\rangle} 
\newcommand{\stroke}{\mathbin|}     
\newcommand{\alphar}{\overrightarrow{\alpha}}
\newcommand{\alphal}{\overleftarrow{\alpha}}
\newcommand{\Ch}{\textnormal{Ch}}
\def\pairL_#1(#2|#3){{}_{#1}(#2\stroke#3)} 
\def\pairR(#1|#2)_#3{(#1\stroke#2)_{#3}} 
\def\scal<#1|#2>{\langle#1\stroke#2\rangle} 
\newbox\ncintdbox \newbox\ncinttbox 
\begin{document}
\allowdisplaybreaks
\maketitle

\vspace{-2pc}

\begin{abstract}
We prove that there is a unique Levi-Civita connection on the one-forms of the Dabrowski-Sitarz spectral triple for the Podle\'{s} sphere $S^{2}_{q}$.
We compute the full curvature tensor, as well as the Ricci and scalar curvature of the Podle\'{s} sphere using the framework of \cite{MRLC}. The scalar curvature is a constant, and as the parameter $q\to 1$, the scalar curvature converges to the classical value $2$. We prove a generalised Weitzenbock formula for the spinor bundle, which differs from the classical Lichnerowicz formula for $q\neq 1$, yet recovers it for $q\to 1$.
\end{abstract}

\tableofcontents

\parskip=6pt
\parindent=0pt

\section{Introduction}

As an example of the constructions presented in \cite{MRLC}, we compute the Levi-Civita connection and curvature of the Podle\'{s} quantum sphere $S^{2}_{q}$.
The geometry of the Podle\'{s} sphere has been investigated by
numerous authors \cite{BMBook,DS,KW,Maj-pods,NT,RS,S,SW,W}. One important feature is that the classical ($q=1$) version has positive curvature, and there is no zero curvature metric, so this represents an example outside the conformally flat setting. Our results are broadly in line with those of the existing algebraic works \cite{BMBook,Maj-pods}, with the principal difference being the starting point and the techniques.

We work with the differential one-forms built  from the spectral triple \cite{DS} for the Podle\'s sphere. As is known \cite{SW,W}, this differential calculus agrees with the usual abstract differential calculus. The bimodule of one-forms in this case has trivial center, putting it outside of the scope of techniques developed in \cite{BGJ2} to construct a Levi-Civita connection for it. The general theory of \cite{MRLC, MRC} can be applied in this context and the first main result of the paper reads
\begin{introthm}
The module of differential one-forms on the Podle\'{s} sphere $S^{2}_{q}$ equipped with the quantum metric defined in Section \ref{sec: qmetric} admits a unique Levi-Civita connection whose scalar curvature equals $r=[2]_q(1+(q^{-2}-q^2)^2)$.
\end{introthm}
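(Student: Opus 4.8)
The plan is to realise everything concretely in terms of the two spectral line modules into which the one-forms decompose, and then to reduce the Levi-Civita conditions to a finite linear system. First I would fix an explicit description of $\Om^1$ over $\A=\O(S^2_q)$: as is standard for the Podle\'s sphere, the bimodule of one-forms splits as $\Om^1=\E_+\oplus\E_-$ into the two spectral subspaces for the circle action (the ``holomorphic'' and ``antiholomorphic'' parts), with the exterior derivative decomposing accordingly as $\dee=\del_++\del_-$ read off from the off-diagonal components of the Dirac operator $\D$ of \cite{DS}. Each $\E_\pm$ is a rank-one projective module generated by a single one-form $\om_\pm$, and I would record the commutation relations of $\om_\pm$ with the generators of $\A$ together with the inner products supplied by the quantum metric $g$ of Section \ref{sec: qmetric}.

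Next I would write the most general connection $\nb\colon\Om^1\to\Om^1\ox_\A\Om^1$ compatible with the bimodule structure, that is, a left connection together with the generalised braiding $\sg\colon\Om^1\ox_\A\Om^1\to\Om^1\ox_\A\Om^1$ encoding the right Leibniz rule, parametrised by its Christoffel coefficients $\Ga_\pm$. The two defining conditions of \cite{MRLC} then become linear constraints on these coefficients: torsion-freeness, $\w\op\nb=\dee$, and compatibility of $\nb$ with both the metric $g$ and the associated reality structure. I would impose torsion-freeness first, which is readily solved using the explicit bimodule relations, and then the metric/reality condition, which pins down the remaining freedom. The content of the existence-and-uniqueness assertion is that this combined system is consistent and has exactly one solution; verifying that the a priori over-determined system is nonetheless solvable, and that the compatibility hypotheses of the general theorem of \cite{MRLC} genuinely hold in this centreless example, is the step I expect to be the main obstacle.

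Finally, with the unique $\nb$ in hand, I would compute the curvature operator $R_\nb=\nb^2$ as a bimodule map on $\Om^1$, contract it against the metric to obtain the Ricci tensor, and contract once more to obtain the scalar curvature. Carrying the explicit $\E_\pm$ data through these contractions should produce $r=[2]_q(1+(q^{-2}-q^2)^2)$; the two summands reflect, respectively, an undeformed spherical contribution $[2]_q$ (which limits to $2$) and a purely quantum correction $[2]_q(q^{-2}-q^2)^2$ that vanishes as $q\to1$, consistent with the stated classical limit. This last part is a direct, if lengthy, computation once $\sg$ and the $\Ga_\pm$ have been determined, so the genuine difficulty lies earlier, in establishing uniqueness for a module with trivial centre.
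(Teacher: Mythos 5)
Your outline contains a step that fails at the outset: the spectral subspaces $\mathcal{E}_{\pm}$ into which $\Omega^{1}_{\D}(\B)$ decomposes are line modules of nonzero degree, hence projective of rank one but \emph{not free}, so neither is ``generated by a single one-form $\omega_{\pm}$'' with respect to which you could record commutation relations and write Christoffel coefficients $\Gamma_{\pm}$. This is precisely why the paper works instead with a three-element exact frame $\omega_{j}=q^{-2+j}(\kappa^{1}_{1})^{-1}[\D,t^{1}_{-2+j,0}]$, $j=1,2,3$, and obtains existence essentially for free: the Grassmann connection of an exact frame is automatically Hermitian and torsion-free once the junk submodule is shown to be complemented. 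Relatedly, you write the torsion-free condition as $\wedge\circ\nabla=\dee$ without ever determining what the two-forms are. In this framework that requires computing the junk two-tensors $JT^{2}_{\D}$ (spanned by the quantum metric $G$ and the families $X_{k},Y_{k}$) and exhibiting the orthogonal complement, which the paper obtains from the twisted Chern character $\pi_{\D}(\Ch_{2}(P_{+}))$, producing the single ``volume'' two-tensor $C$ and the projection $1-\Psi=\frac{1}{\alpha}\ketbra{C}{C}$. Without this computation the torsion-free condition cannot even be stated, let alone solved.

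The second genuine gap is uniqueness. You correctly identify that the module has trivial centre and that this is where the difficulty lies, but you offer no mechanism: solving an ``over-determined linear system'' presupposes the free parametrisation that does not exist here. The paper's route is to verify the hypotheses of the general uniqueness theorem of \cite{MRLC} for a $\sigma$-$\dag$-bimodule connection: one must construct the braiding $\sigma$ explicitly (it acts by $q^{\pm2}$ on the various graded pieces and satisfies $\sigma\circ\nablar^{G}=\nablal^{G}$), prove concordance by showing $(\Psi\ox1\circ1\ox\Psi)^{n}$ converges to $P_{X}\ox1\circ1\ox P_{X}+P_{Y}\ox1\circ1\ox P_{Y}$, and then use the triviality of the commutant of $\B$ in $\mathcal{O}(SU_{q}(2))$ to conclude $\mathcal{Z}(\mathrm{Im}(\Pi))=\{0\}$, which renders the injectivity condition vacuous. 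The final contractions giving $r=[2]_q(1+(q^{-2}-q^2)^2)$ are indeed routine once all of this is in place, but the two summands do not split as you suggest into an ``undeformed'' and a ``quantum'' contribution of the Riemann tensor; the full curvature is a single term proportional to $\omega_{i}\ox C\ox(\cdot)\,\omega_{i}^{\dag}$ and the factorisation of $r$ is an algebraic identity appearing only after both contractions.
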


The relevance of this example goes beyond being able to compute junk, exterior derivative and Levi-Civita connection. Unlike classical and $\theta$-deformed manifolds, both the uniqueness of the Levi-Civita connection and the Weitzenbock formula for the Podle\'s sphere hold with a non-trivial generalised braiding. Thus  starting from connections on modules and spectral triples, we arrive at essentially the same structures as the algebraic theory, where such braidings appear naturally for other reasons.

To state a Weitzenbock formula requires a special setting.
The Dirac spectral triple of the Podle\'s sphere involves the Dirac operator $\D$  acting on a spinor module $\mathcal{S}=\mathcal{S}^{+}\oplus \mathcal{S}^{-}$. The generalised Weitzenbock formula involves both the spinor Laplacian $\Delta^{\mathcal{S}}$. Our second main result establishes a close relationship between these operators and the Clifford representation of the curvature of the spinor bundle.

\begin{introthm} The Dirac operator $\D$ and spin Laplacian $\Delta^{\mathcal{S}}$ for the Podle\'{s} sphere $S^{2}_{q}$ are related by the Weitzenbock formula
\[\D^{2}=\Delta^{\mathcal{S}}+\frac{1}{q^{-2}+q^2}\begin{pmatrix} q^{2}&0\\0&q^{-2}\end{pmatrix},\]
and the spin Laplacian $\Delta^{\mathcal{S}}$ is a positive operator.
\end{introthm}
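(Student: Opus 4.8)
The plan is to put the Dirac operator in the factorised form $\D=\cc\circ\nb^{\mathcal{S}}$, where $\nb^{\mathcal{S}}\colon\mathcal{S}\to\Omega^1\ox\mathcal{S}$ is the spin connection obtained by lifting the Levi-Civita connection of the first main theorem through the Clifford module structure, and $\cc\colon\Omega^1\ox\mathcal{S}\to\mathcal{S}$ is Clifford multiplication. Granting this factorisation, the argument mirrors the classical Weitzenbock computation: one expands $\D^2=\cc\circ\nb^{\mathcal{S}}\circ\cc\circ\nb^{\mathcal{S}}$, uses the compatibility of $\nb^{\mathcal{S}}$ with $\cc$ to commute the inner Clifford action past the connection, and reorganises the result into the connection Laplacian $\Delta^{\mathcal{S}}=(\nb^{\mathcal{S}})^{*}\nb^{\mathcal{S}}$ together with a zeroth-order term that is the Clifford contraction $\cc(R^{\mathcal{S}})$ of the curvature of $\nb^{\mathcal{S}}$. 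The theorem then amounts to identifying $\cc(R^{\mathcal{S}})$ with the stated constant matrix $C=\tfrac{1}{q^{-2}+q^{2}}\left(\begin{smallmatrix}q^{2}&0\\0&q^{-2}\end{smallmatrix}\right)$ and establishing positivity of $\Delta^{\mathcal{S}}$.

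Concretely, I would first build $\nb^{\mathcal{S}}$ and check the factorisation on the explicit operators on $\mathcal{S}=\mathcal{S}^{+}\oplus\mathcal{S}^{-}$ coming from the Dabrowski-Sitarz triple. Since $\D$ is odd for the grading, $\D^{2}$ and $\Delta^{\mathcal{S}}$ are both even, so $\cc(R^{\mathcal{S}})$ is diagonal, consistent with the form of $C$. Because $S^{2}_{q}$ is two-dimensional, its curvature tensor, already computed for the first main theorem, is determined by a single scalar quantity, so the remaining work is to feed that curvature into the Clifford contraction and read off the two diagonal entries on $\mathcal{S}^{+}$ and $\mathcal{S}^{-}$. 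I would cross-check the outcome by computing $\D^{2}$ directly from the known spectrum of the Dabrowski-Sitarz Dirac operator, computing $\Delta^{\mathcal{S}}$ on each summand, and subtracting; the difference should be the same constant matrix $C$ on each eigenspace.

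The main obstacle is the nontrivial generalised braiding. Classically the flip permits a clean split of $\D^{2}$ into a symmetric second-order part (the Laplacian) and an antisymmetric zeroth-order part (the curvature), and the contraction collapses to the scalar $\tfrac{r}{4}\,\Id$. Here one must carry the braiding $\sigma$ through the expansion of $\D^{2}$ and through the very definition of $\Delta^{\mathcal{S}}$; it is exactly this braiding that prevents the curvature term from being scalar and produces the asymmetry between the values $q^{2}$ and $q^{-2}$ on the two spinor summands. The delicate point is to verify that, after moving the inner $\cc$ past $\nb^{\mathcal{S}}$ with the correct braiding, the surviving second-order operator is genuinely $(\nb^{\mathcal{S}})^{*}\nb^{\mathcal{S}}$ rather than some other braided combination; I expect this to be where the framework of \cite{MRLC} does the real work. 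Note that $\tfrac14$ of the scalar curvature $r=[2]_{q}(1+(q^{-2}-q^{2})^{2})$ is not equal to $C$ for $q\neq1$, which is the precise sense in which the formula departs from the classical Lichnerowicz identity, while $C\to\tfrac12\Id$ and $r/4\to\tfrac12$ as $q\to1$.

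Finally, positivity of $\Delta^{\mathcal{S}}$ follows because it is a connection Laplacian, manifestly of the form $T^{*}T$ with $T=\nb^{\mathcal{S}}$ an adjointable operator for the Hermitian structure on $\mathcal{S}$; the only thing to check is that the braided Hermitian structure makes $\nb^{\mathcal{S}}$ adjointable, after which $\langle\Delta^{\mathcal{S}}\xi,\xi\rangle=\langle\nb^{\mathcal{S}}\xi,\nb^{\mathcal{S}}\xi\rangle\geq0$ is automatic. Combined with the Weitzenbock formula this yields the Lichnerowicz-type lower bound $\D^{2}\geq C>0$, so $\D$ has no harmonic spinors, the expected consequence of strictly positive scalar curvature.
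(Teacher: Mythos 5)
Your strategy for the Weitzenbock identity itself --- factorise $\D=c\circ\nablal^{\mathcal{S}}$, push the Clifford action past the connection using the braided Leibniz compatibility, and identify the zeroth-order remainder as the Clifford contraction of the spinor curvature --- is essentially the route the paper takes: it verifies the four conditions of Definition \ref{def: Dirac-spectral-triple} (the braided compatibility \eqref{eq:twisted-Leibniz} being the substantial one), invokes the general Theorem \ref{thm:Weitz1}, and then computes $c\circ m\circ\sigma\ox1(R^{\nablal^{\mathcal{S}}})$ explicitly. Two caveats on the details you defer. First, Theorem \ref{thm:Weitz1} also requires $m\circ\sigma\circ\Psi=m\circ\Psi$ and $\Psi(G)=G$; these hold here because $\sigma$ preserves $X\oplus Y\subset\ker m$ and fixes $G$, but they are hypotheses you must record and check. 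Second, the spinor curvature is not read off from the already-computed curvature of $\Omega^1_\D(\B)$: the paper computes $R^{\nablal^{\mathcal{S}}}$ from scratch as $\tfrac{q}{2}C\ox\bigl(\begin{smallmatrix}-q^{-1}&0\\0&q\end{smallmatrix}\bigr)$, and the asymmetric diagonal matrix only emerges after applying the braiding to $C$ via \eqref{eq:sigma-cee}. These are deferred computations rather than errors.

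The genuine gap is in the positivity argument. You assert that $\Delta^{\mathcal{S}}$ is ``manifestly of the form $T^{*}T$'', but in this framework the connection Laplacian is \emph{defined} by the algebraic contraction \eqref{eq:conn-lap},
\[
\Delta^{\mathcal{S}}(x)=e^{-\beta}m(G)\pairing{G}{(\nablar^{G}\ox1+1\ox\nablal^{\mathcal{S}})\circ\nablal^{\mathcal{S}}(x)}_\B,
\]
not as an operator adjoint on $L^{2}(S,h)$. Identifying this with $(\nablal^{\mathcal{S}})^{*}\nablal^{\mathcal{S}}$ requires an integration-by-parts identity against the Haar state: one must show that the divergence term $h\bigl(\pairing{\nablar^{G}({}_{\Omega^1}\pairing{\nablal^{\mathcal{S}}x}{y})}{G}\bigr)$ vanishes. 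This is the content of Proposition \ref{prop:Lap-pos}: it reduces to showing $h\bigl(\Tr\bigl(\bigl(\begin{smallmatrix}q&0\\0&q^{-1}\end{smallmatrix}\bigr)\,m\circ\nablar^{G}(\omega)\bigr)\bigr)=0$ for every one-form $\omega$, which uses $m\circ\nablar^{G}([\D,b]a)=\mathrm{diag}\bigl(q\,\partial_f(\partial_e(b)a),\,q^{-1}\partial_e(\partial_f(b)a)\bigr)$ together with $h(t^\ell_{ij})=\delta_{\ell,0}$. Adjointability of $\nablal^{\mathcal{S}}$ is not the obstruction; the point is that without the vanishing of this divergence the algebraically defined $\Delta^{\mathcal{S}}$ and the manifestly positive $(\nablal^{\mathcal{S}})^{*}\nablal^{\mathcal{S}}$ differ by a first-order term of indefinite sign, so positivity is not automatic and your concluding inequality $\D^{2}\geq\tfrac{1}{q^{-2}+q^{2}}\bigl(\begin{smallmatrix}q^{2}&0\\0&q^{-2}\end{smallmatrix}\bigr)>0$ is unjustified until that computation is supplied.
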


The matrix operator appearing in the Weitzenbock formula is to be interpreted as acting on the graded module $\mathcal{S}=\mathcal{S}^{+}\oplus\mathcal{S}^{-}$. Thus, the close relationship between the curvature of the spinor module and the curvature of the module of one-forms breaks down for $q\neq 1$. Whilst a Weitzenbock formula holds, the specific Lichnerowicz relationship that 
$\D^2=\Delta+\frac{1}{4}r$ where $r$ is the scalar curvature does not hold for $q\neq 1$. That said, as $q\to 1$, we do recover the classical formula.

{\bf Acknowledgements} The authors thank the Erwin Schr\"{o}dinger 
Institute, 
Vienna, for hospitality and support during the 
production of this work. 
BM thanks the University of Wollongong for hospitality at 
an early stage of this project. AR thanks the Universiteit  Leiden for hospitality in 2022 and 2024.
The authors thank Alan Carey, Uli Kr\"{a}hmer, Giovanni Landi and Walter van Suijlekom for valuable 
discussions. 


\section{Background}
\label{sec:ass1}
We present a summary of the theoretical framework from \cite{MRLC,MRC}
which we use to define and compute the Levi-Civita connection and curvature. 

\subsection{Differentials and junk}

Throughout this article we are looking at the differential structure provided by a spectral triple. We never require a compact resolvent condition, and so have omitted it from the following, otherwise standard, definition.
See \cite[Examples 2.4--2.6]{MRLC}.
\begin{defn}
\label{defn:NDS}
Let $B$ be a $C^{*}$-algebra. A spectral triple for $B$ is a triple $(\B,\H,\D)$ where $\B\subset B$ is a local \cite[Definition 2.1]{MRLC} dense $*$-subalgebra, $\H$ is a  Hilbert space equipped with a $*$-representation $B\to \mathbb{B}(\H)$, and $\D$ an unbounded self-adjoint regular operator $\D:\,{\rm dom}(\D)\subset \H\to \H$ such that for all $a\in\B$
\[
a\cdot{\rm dom}(\D)\subset{\rm dom}(\D)\quad \mbox{and} \quad [\D,a]\ \mbox{is bounded}.
\]
\end{defn}

Given a spectral triple $(\B,\H,\D)$, the module of one-forms is the $\B$-bimodule
\[
\Omega^{1}_{\D}(\B):=\textnormal{span}\left\{a[\D,b]:a,b\in\B\right\}\subset\mathbb{B}(\H).
\]
We obtain a first order differential calculus $\mathrm{d}:\B\to \Omega^{1}_{\D}(\B)$ by setting $\mathrm{d}(b):=[\D,b]$. This calculus carries an involution $(a[\D,b])^{\dag}:=[\D,b]^{*}a^{*}$ induced by the operator adjoint. Thus $(\Omega^{1}_{\D}(\B),\dag)$ is a first order differential structure in the sense of   \cite{MRLC}.

We recollect some of the constructions of \cite{MRLC} for $(\Omega^{1}_{\D}(\B),\dag)$. Write $T^{k}_{\D}(\B):=\Omega_\D^1(\B)^{\ox_\B k}$ and $\Omega_\D^1(\B)^{k}={\rm span}\{b_0[\D,b_1]\cdots[\D,b_k]:b_j\in\B\}$. The universal differential
forms $\Omega^*_{u}(\B)$ admit  representations
\begin{align}
\pi_\D:\Omega^k_{u}(\B)\to T^{k}_{\D}(\B)\quad \pi_\D(a_0\delta(a_1)\cdots\delta(a_k))&=a_0[\D,a_1]\ox_\B\cdots\ox_\B[\D,a_k],\\
m\circ\pi_{\D}:\Omega^k_{u}(\B)\to \Omega^k_\D(\B)\quad m\circ\pi_\D(a_0\delta(a_1)\cdots\delta(a_k))&=a_0[\D,a_1]\cdots[\D,a_k],
\end{align}
where $m:T^{k}_{\D}(\B)\to\Omega^k_\D(\B)$ is the multiplication map.
Neither $\pi_\D$ nor $m\circ\pi_\D$ are maps of differential algebras, but are $\B$-bilinear maps of associative $*$-$\B$-algebras, \cite{Landi,MRLC}. The $*$-structure on $\Omega_\D^*(\B)$ is determined by the adjoint of linear maps on $\H$, while the $*$-structure on $\oplus_kT^{k}_{\D}(\B)$ is given by the operator adjoint and
\[
(\omega_1\ox_\B\omega_2\ox_\B\cdots\ox_\B\omega_k)^\dag:= 
\omega_k^*\ox_\B\cdots\ox_\B\omega_2^*\ox_\B\omega_1^*.
\]
We will write $\omega^\dag:=\omega^*$ for one forms $\omega$ as well. 
 
The maps $\pi_{\D}:\Omega^{*}_{u}(\B)\to T^*_\D$ and $\delta:\Omega^{k}_{u}(\B)\to \Omega^{k+1}_{u}(\B)$ are typically not compatible  in the sense that $\delta$ need not map $\ker \pi_{\D}$ to itself. Thus in general,  $T^{*}_{\D}(\B)$ can not be made into a differential algebra.
The issue to address is that
there are universal
forms $\omega\in \Omega^n_u(\B)$ for which $\pi_{\D}(\omega)=0$
but $\pi_{\D}(\delta(\omega))\neq0$. The latter are known as {\em junk forms},  \cite[Chapter VI]{BRB}. 
We denote the $\B$-bimodules of junk forms by
\begin{align*}
J^k_\D(\B)&=\{\pi_\D(\delta(\omega)):\,m\circ\pi_\D(\omega)=0\}\quad\mbox{and}\quad
JT^k_\D(\B)=\{\pi_\D(\delta(\omega)):\,\pi_\D(\omega)=0\}.
\end{align*}
Observe that the junk submodules only depend on the representation of the universal forms.
\begin{defn}
\label{ass:fgp-metric-junk}
A second order differential structure $(\Omega^{1}_{\D},\dag,\Psi)$ is a first order differential structure $(\Omega^{1}_{\D}(\B),\dag)$ together with an idempotent $\Psi:T^{2}_{\D}\to T^{2}_{\D}$ satisfying $\Psi\circ\dag=\dag\circ\Psi$ and $JT^2_\D(\B)\subset{\rm Im}(\Psi)\subset m^{-1}(J^2_\D(\B))$. A second order differential structure is Hermitian if
$\Omega^1_\D(\B)$ is a finitely generated projective right $\B$-module with right inner product $\pairing{\cdot}{\cdot}_\B$, such that $\Psi=\Psi^{2}=\Psi^{*}$ is a projection. 
\end{defn}

A second order differential structure admits an exterior derivative $\d:\Omega^1_\D(\B)\to T^2_\D(\B)$ 
via 
\begin{equation}
\label{eq: second-order-diff}
\d(\rho)=(1-\Psi)\circ \pi_\D\circ\delta\circ \pi_\D^{-1}(\rho).
\end{equation} 
The differential satisfies $\d([\D,b])=0$ for all $b\in\B$. A differential on one-forms allows us to define curvature for modules, and formulate torsion for connections on one-forms. 

For an Hermitian differential structure $(\Omega^{1}_{\D}(\B),\dag,\Psi,\pairing{\cdot}{\cdot})$, the module of one-forms $\Omega^1_\D(\B)$ is also a finite projective left module \cite[Lemma 2.12]{MRLC} with inner product ${}_\B\pairing{\omega}{\rho}=\pairing{\omega^\dag}{\rho^\dag}_\B$. Thus all tensor powers $T^{k}_{\D}(\B)$ carry right and left inner products. Inner products on $\Omega^k_\D(\B)$ do not arise automatically.

The two inner products on $\Omega^1_\D(\B)$ give rise to equivalent norms on $\Omega^1_\D(\B)$, and using results of \cite{KajPinWat}, $\Omega^1_\D(\B)$ is a bi-Hilbertian bimodule of finite index. To explain what this means for us, recall \cite{FL02} that a (right) frame for $\Omega^1_\D(\B)$ is a (finite) collection of elements $(\omega_j)$ that satisfy
\[
\rho=\sum_j\omega_j\pairing{\omega_j}{\rho}_\B
\]
for all $\rho\in \Omega^1_\D(\B)$. 
A finite projective bi-Hilbertian module has a  ``line element'' or ``quantum metric'' \cite{BMBook} given by 
\begin{equation}
G=\sum_j\omega_j\ox\omega_j^\dag.
\label{eq:Gee}
\end{equation} 
The line element $G$ is independent of the choice of frame, is central, meaning that $bG=Gb$ for all $b\in\B$, and
\[
{\rm span}_\B\Big\{\sum_j\omega_j\ox\omega_j^\dag:\,\mbox{for any frame }(\omega_j)\Big\}
\]
is a complemented submodule of $T^{2}_{\D}$. The endomorphisms $\alphar(G)$ and $\alphal(G)$ coincide with the identity operator on $\Omega^{1}_{\D}(\B)$. The inner product is computed  via 
\begin{equation}
-g(\omega\ox\rho):=\pairing{G}{\omega\ox\rho}_\B=\pairing{\omega^\dag}{\rho}_\B.
\label{eq:Riemann}
\end{equation}
Such bilinear inner products appear in \cite{BMBook,BGJ2,BGJ1}. The element 
\begin{equation}
e^\beta:=\sum_j{}_\B\pairing{\omega_j}{\omega_j}=-g(G)\in\B
\label{eq:ee-beta}
\end{equation} 
is independent of the choice of right frame, and
is central, positive and invertible (provided the left action of $\B$ on $\Omega^1_\D(\B)$ is faithful), and we define the normalised version $Z=e^{-\beta/2}\sum_j\omega_j\ox\omega_j^\dag$.

\subsection{Hermitian torsion-free connections}
A right connection on a right $\B$-module $\X$ is a $\C$-linear map
\[
\nablar:\X\to \X\otimes_{\B}\Omega^{1}_{\D},\quad\mbox{such that}\quad
\overrightarrow{\nabla}(x a)=\overrightarrow{\nabla}(x)a+x\ox[\D,a].
\]
There is a similar definition for left connections on left modules.
Connections always exist on finite projective modules.

Given a connection $\overrightarrow{\nabla}$ on a right inner product $\B$-module $\X$ we say that $\overrightarrow{\nabla}$ is Hermitian \cite[Definition 2.23]{MRLC} if
for all $x,y\in\X$ we have
\[
-\pairing{\overrightarrow{\nabla}x}{y}_\B+\pairing{x}{\overrightarrow{\nabla}y}_\B=[\D,\pairing{x}{y}_\B].
\]
For left connections we instead require
\[
{}_\B\pairing{\overleftarrow{\nabla}x}{y}-{}_\B\pairing{x}{\overleftarrow{\nabla}y}=[\D,{}_\B\pairing{x}{y}].
\]
If furthermore $\X$ is a $\dag$-bimodule \cite[Definition 2.8]{MRLC} such as $\X=T^k_\D$, then for each right connection $\overrightarrow{\nabla}$ on $\X$ there is a conjugate left connection $\overleftarrow{\nabla}$ given by
$\overleftarrow{\nabla}=-\dag\circ\overrightarrow{\nabla}\circ\dag$ which is Hermitian if and only if $\overrightarrow{\nabla}$ is Hermitian.
\begin{example}

Given a (right) frame $v=(x_j)\subset \X$ for a $\dag$-bimodule $\X$ we obtain left- and right Grassmann connections via
\[
\nablal^{v}(x):=[\D,{}_\B\pairing{x}{x_{j}^{\dag}}]\otimes x_{j}^{\dag},\quad \nablar^{v}(x):=x_j\ox[\D,\pairing{x_j}{x}_\B],\qquad x\in\X.
\]
The Grassmann connections are Hermitian and conjugate, that is $\nablal^{v}=-\dag\circ\nablar^{v}\circ\dag$. 
A pair of conjugate connections on $\X$ are both Hermitian if and only if for any right frame $(x_j)$ \cite[Proposition 2.30]{MRLC}
\begin{equation}
\overrightarrow{\nabla}(x_j)\ox x_j^\dag+x_j\ox\overleftarrow{\nabla}(x^\dag_j)=0.
\label{eq:conj-pair-Herm}
\end{equation}
\end{example}

The differential \eqref{eq: second-order-diff} allows us to  ask whether a connection on $\Omega^1_\D(\B)$ is torsion-free, meaning \cite[Section 4.1]{MRLC} that for any frame
\[
1\ox(1-\Psi)\big(\overrightarrow{\nabla}(\omega_j)\ox\omega_j^\dag+\omega_j\ox\d(\omega_j^\dag)\big)=0.
\]
For a Hermitian right connection, being torsion-free is equivalent to 
$(1-\Psi)\circ\overrightarrow{\nabla}=-\d$. For the conjugate left connection this becomes $(1-\Psi)\circ\overleftarrow{\nabla}=\d$, \cite[Proposition 4.5]{MRLC}.
\begin{defn} A frame $(\omega_{j})$ for $\Omega^{1}_{\D}(\B)$ is \emph{exact} if there exist $b_{j}\in\B$ such that $\omega_{j}=[\D,b_{j}]$.
\end{defn}

The existence result we will employ for Hermitian torsion-free connections is tied to the existence of an exact frame for the one-forms.

\begin{thm}
\label{thm:existence}\cite[Theorem 4.7, Corollary 4.9]{MRLC}
Let $(\Omega^1_\D,\dag,\Psi,\pairing{\cdot}{\cdot}_\B)$ be an Hermitian differential structure. Suppose there is an exact frame $v=(\omega_j)$  for $\Omega^1_\D$. Then for
the Grassmann connection $\nablar^v:\Omega^1_\D\to \Omega^1_\D\ox_\B\Omega^1_\D$ of the frame $(\omega_j)$ there is an equality of $\B$-bimodules 
\begin{equation}
JT^2_\D=\B\cdot\nablar^{v}(\dee(\B))\cdot\B.
\end{equation} 
Moreover the Grassmann connection $\nablar^{v}$ is Hermitian and torsion-free. 
\end{thm}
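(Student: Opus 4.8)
The plan is to exploit the exact frame to build a right $\B$-linear splitting of $\pi_\D$ and to observe that $\pi_\D\circ\delta$ becomes $\B$-bilinear once restricted to $\ker\pi_\D$; the whole statement then reduces to understanding the ``frame relations'' in the universal calculus. First I would set $c_j^b:=\pairing{\omega_j}{[\D,b]}_\B$ and, using the exactness $\omega_j=[\D,b_j]$, define the right $\B$-linear map $s:\Omega^1_\D(\B)\to\Omega^1_u(\B)$, $s(\rho)=\sum_j\delta(b_j)\pairing{\omega_j}{\rho}_\B$. Since $\pi_\D\circ s=\mathrm{id}$ by the frame property, $s$ splits $\pi_\D$, and the universal one-form $\eta_b:=(1-s\pi_\D)\delta(b)=\delta(b)-\sum_j\delta(b_j)c_j^b$ lies in $\ker\pi_\D$. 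A direct computation with $\delta^2=0$ and the graded Leibniz rule gives $\delta(\delta(b_j)c_j^b)=-\delta(b_j)\delta(c_j^b)$, hence
\[\pi_\D\delta(\eta_b)=\sum_j[\D,b_j]\ox[\D,c_j^b]=\sum_j\omega_j\ox[\D,\pairing{\omega_j}{[\D,b]}_\B]=\nablar^v(\dee(b)).\]
Because $\eta_b\in\ker\pi_\D$, this shows $\nablar^v(\dee(\B))\subseteq JT^2_\D$, and as $JT^2_\D$ is a bimodule the inclusion $\B\cdot\nablar^{v}(\dee(\B))\cdot\B\subseteq JT^2_\D$ follows.

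For the reverse inclusion I would first record that on $\ker\pi_\D$ the map $\pi_\D\circ\delta$ is $\B$-bilinear: for $\omega\in\ker\pi_\D$ the Leibniz rule gives $\pi_\D\delta(a\omega)=[\D,a]\ox\pi_\D(\omega)+a\,\pi_\D\delta(\omega)=a\,\pi_\D\delta(\omega)$ and likewise $\pi_\D\delta(\omega a)=\pi_\D\delta(\omega)\cdot a$, the extra terms vanishing because $\pi_\D(\omega)=0$. Next, using $a\,\delta(b)=\delta(ab)-\delta(a)\cdot b$ together with the right $\B$-linearity of $1-s\pi_\D$, one finds $(1-s\pi_\D)(a\,\delta(b))=\eta_{ab}-\eta_a\cdot b$. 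Writing a general $\omega\in\ker\pi_\D$ as $\sum_i a_0^i\delta(a_1^i)$ and applying $1-s\pi_\D$ (which fixes $\omega$) expresses $\omega=\sum_i(\eta_{a_0^i a_1^i}-\eta_{a_0^i}\cdot a_1^i)$ as a right $\B$-combination of frame relations. Applying the bilinear map $\pi_\D\delta$ then yields $JT^2_\D=\pi_\D\delta(\ker\pi_\D)\subseteq\nablar^v(\dee(\B))\cdot\B$, which completes the equality.

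For the ``moreover'' part, the Grassmann connection is Hermitian by the general fact for exact frames recorded in the Example above, so only torsion-freeness remains. Taking the frame lift $\pi_\D^{-1}(\rho)=s(\rho)$, the same computation as for $\eta_b$ gives $\pi_\D\delta(s(\rho))=-\nablar^v(\rho)$; since $\d$ is independent of the chosen lift (the ambiguity lies in $JT^2_\D\subseteq{\rm Im}(\Psi)$, which $1-\Psi$ annihilates), we obtain $\d(\rho)=-(1-\Psi)\nablar^v(\rho)$, that is $(1-\Psi)\circ\nablar^v=-\d$, which is exactly the torsion-free condition for a Hermitian right connection. The main obstacle is the reverse junk inclusion: one must verify both that the frame relations $\eta_b$ generate all of $\ker\pi_\D$ as a right module and that $\pi_\D\delta$ is genuinely $\B$-bilinear there, each resting on careful graded-Leibniz bookkeeping. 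The well-definedness of $\d$ used throughout in turn hinges on the containment $JT^2_\D\subseteq{\rm Im}(\Psi)$ built into the notion of Hermitian differential structure.
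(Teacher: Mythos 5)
The paper does not prove this theorem itself: it is imported verbatim from \cite[Theorem 4.7, Corollary 4.9]{MRLC}, so there is no in-text argument to compare yours against. On its own merits your reconstruction is correct and complete. The right $\B$-linear splitting $s(\rho)=\sum_j\delta(b_j)\pairing{\omega_j}{\rho}_\B$ of $\pi_\D$ is exactly the device that makes an exact frame useful: the computation $\pi_\D\delta(\eta_b)=\sum_j\omega_j\ox[\D,\pairing{\omega_j}{[\D,b]}_\B]=\nablar^v(\dee(b))$ with $\eta_b\in\ker\pi_\D$ gives the forward inclusion, and the identity $(1-s\pi_\D)(a\,\delta(b))=\eta_{ab}-\eta_a\cdot b$ together with the right $\B$-linearity of $\pi_\D\circ\delta$ on $\ker\pi_\D$ gives the reverse one; both Leibniz-rule verifications check out, including the vanishing of $\pi_\D(\omega\,\delta(a))=\pi_\D(\omega)\ox[\D,a]$ for $\omega\in\ker\pi_\D$. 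The torsion-freeness argument via $\pi_\D\delta(s(\rho))=-\nablar^v(\rho)$ and the well-definedness of $\d$ modulo $JT^2_\D\subset{\rm Im}(\Psi)$ is likewise sound, and Hermitian-ness is indeed already recorded for Grassmann connections of frames. The only point worth flagging is that you implicitly use unitality of $\B$ to absorb $\nablar^v(\dee(\B))$ into $\B\cdot\nablar^v(\dee(\B))\cdot\B$; that holds here and in the setting of \cite{MRLC}.
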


For uniqueness, which we discuss in the Appendix, we need  the definition of a special kind of bimodule connection.
\begin{defn}
\label{ass:ess}
Suppose that $\sigma: T^{2}_\D(\B)\to T^2_\D(\B)$ is an invertible bimodule map such that $\dag\circ\sigma=\sigma^{-1}\circ\dag$ and such that
the conjugate connections 
$\nablar,\nablal$ satisfy
\[
\sigma\circ\overrightarrow{\nabla}=\overleftarrow{\nabla}.
\]
Then we say that $\sigma$ is a braiding and that  $(\nablar,\sigma)$ is a $\dag$-bimodule connection.
\end{defn}
Under some further technical assumptions on the Hermitian differential structure \cite[Sections 4,5]{MRLC}, an Hermitian torsion-free $\sigma$-bimodule connection is unique if it exists. See Theorem \ref{thm:uniqueness} in the Appendix for details on uniqueness.

\subsection{Curvature and Weitzenbock formula}
\label{subsec:curvweitz}
Given a second order differential structure, the curvature of connections can be defined in the well-known algebraic manner. For Hermitian differential structures, can also define Ricci- and scalar curvature. Curvature and the square of the Dirac operator are related via a Weitzenbock formula on a class of Dirac spectral triples which encode the key features of Dirac bundles over manifolds.
\begin{defn}
\label{defn:Lambda}
Given a second order differential structure $(\Omega^1_{\D}(\B),\dag,\Psi)$ we set 
\[
\Lambda^2_{\D}(\B):=(1-\Psi)T^2_{\D}(\B).
\]
\end{defn}

To define curvature, we require the second order differential $\d:\Omega^1_{\D}\to \Lambda^2_{\D}$, but not any higher degree forms. For curvature, then,   the classical definition is available.

\begin{defn}
\label{def:Riemann}
If $(\Omega^1_{\D}(\B),\dag,\Psi)$ is a second order differential structure,  define the curvature of any right connection $\nablar$ on a finite projective right module $\X_\B$ to be
\[
R^{\nablar}(x)=(1\ox(1-\Psi))\circ (\nablar\ox 1+1\ox\d)\circ\nablar(x)\in \X\ox_\B\Lambda^2_{\D}(\B),\qquad x\in\X.
\]
For a connection $\nablal$ on a left module ${}_\B\X$ we define
the curvature to be
\[
R^{\nablal}(x)=((1-\Psi)\ox1)\circ (1\ox\nablal-\d\ox1)\circ\nablal(x)\in \Lambda^2_\dee(\B)\ox_\B\X,\qquad x\in\X.
\]
\end{defn}
\begin{lemma}
\label{lem:GrCurve}
If $v=(x_j)$ is a right frame for $\X$ and $\nablar^{\X}=\nablar^{v}$ the associated Grassmann connection then
\begin{align*}
R^{\nablar^\X}(x)=1\ox(1-\Psi)&\Big(x_k\ox[\D,\pairing{x_k}{x_j}_\B]\ox[\D,\pairing{x_j}{x_p}_\B]\pairing{x_p}{x}_\B\Big)
\end{align*}
Similarly if $v=(x_j)$ is a left frame for a left module $\X$ and $\nablal^{\X}=\nablal^{v}$ then
\begin{align*}
R^{\nablal^\X}(x)=(1-\Psi)\ox1&\Big({}_\B\pairing{x}{x_p}[\D,{}_\B\pairing{x_p}{x_j}]\ox[\D,{}_\B\pairing{x_j}{x_k}]\ox x_k\Big).
\end{align*}
If $\X$ is a $\dag$-bimodule, the corresponding elements in $\X\ox_{\B}\Lambda^{2}_{\D}\ox_{\B}\X$ are
\[R^{\nablar^{\X}}=x_{k}\ox (1-\Psi)\left( [\D,\pairing{x_k}{x_j}_\B]\ox[\D,\pairing{x_j}{x_p}_\B]\right)\ox x_p^{\dag},\]
and
\[R^{\nablal^\X}= x_p^{\dag}\ox (1-\Psi)\left([\D,{}_\B\pairing{x_p}{x_j}]\ox[\D,{}_\B\pairing{x_j}{x_k}]\right)\ox x_k.\]
\end{lemma}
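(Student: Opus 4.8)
The plan is to read off $R^{\nablar^\X}$ directly from Definition \ref{def:Riemann}, exploiting two structural features of the Grassmann connection: its one-form output is always \emph{exact}, and curvature is a right $\B$-module map. Since $R^{\nablar^\X}\colon\X\to\X\ox_\B\Lambda^2_\D$ is right $\B$-linear, it is determined by its values on a frame through the reconstruction identity $x=\sum_p x_p\pairing{x_p}{x}_\B$; hence it suffices to compute $R^{\nablar^\X}(x_p)$ and then set $R^{\nablar^\X}(x)=\sum_p R^{\nablar^\X}(x_p)\pairing{x_p}{x}_\B$, which is exactly what produces the trailing coefficient $\pairing{x_p}{x}_\B$ in the asserted formula.

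For the core computation I would begin from $\nablar^{v}(x_p)=x_j\ox[\D,\pairing{x_j}{x_p}_\B]$ and apply the operator $\nablar\ox1+1\ox\d$. The decisive simplification is that every one-form here has the form $[\D,b]$ with $b=\pairing{x_j}{x_p}_\B\in\B$, so that $\d([\D,b])=0$ and the entire $1\ox\d$ contribution drops out. A second application of the Grassmann formula to the $\nablar\ox1$ term then gives $x_k\ox[\D,\pairing{x_k}{x_j}_\B]\ox[\D,\pairing{x_j}{x_p}_\B]$, and composing with $1\ox(1-\Psi)$ lands the result in $\X\ox_\B\Lambda^2_\D$. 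Reinserting $\pairing{x_p}{x}_\B$ by right $\B$-linearity yields the stated expression for $R^{\nablar^\X}(x)$. The left-module statement follows by the mirror argument: one uses the left reconstruction $x={}_\B\pairing{x}{x_p}x_p$, the left Grassmann connection $\nablal^{v}(x)=[\D,{}_\B\pairing{x}{x_j}]\ox x_j$ of the left frame, left $\B$-linearity of $R^{\nablal^\X}$, and the vanishing of the $\d\ox1$ term, with $1\ox\nablal$ supplying the iterated inner products.

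For the $\dag$-bimodule reformulation I would reinterpret the right-linear map $R^{\nablar^\X}$ as an element of $\X\ox_\B\Lambda^2_\D\ox_\B\X$ under the identification sending $y\ox\eta\ox z$ to the operator $x\mapsto y\ox\eta\,\pairing{z^\dag}{x}_\B$. Since $\dag$ is an involution, the choice $z=x_p^\dag$ gives $\pairing{(x_p^\dag)^\dag}{x}_\B=\pairing{x_p}{x}_\B$, so placing $x_p^\dag$ in the third tensor slot reproduces exactly the coefficient $\pairing{x_p}{x}_\B$ and yields the bimodule form of $R^{\nablar^\X}$. The left case is symmetric, with $x_p^\dag$ occupying the first slot via ${}_\B\pairing{x}{(x_p^\dag)^\dag}={}_\B\pairing{x}{x_p}$.

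The only point requiring genuine care is the bookkeeping of the balanced tensor products over $\B$, in particular the legitimacy of evaluating $\nablar\ox1$ and $1\ox\d$ separately. Although neither term is individually well defined on $\X\ox_\B\Omega^1_\D$, their sum is, so computing both on the single representative $x_j\ox[\D,\pairing{x_j}{x_p}_\B]$ is valid; the vanishing of the $\d$-term then removes the only delicate contribution, and the remainder is a direct substitution.
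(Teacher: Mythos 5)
Your argument is correct. The paper itself offers no computation here, simply citing \cite[Proposition 3.3]{MRC}, and what you write is the natural direct verification: the $1\ox\d$ term dies because the Grassmann connection outputs exact one-forms, a second application of the frame formula produces the iterated commutators, and right (resp.\ left) $\B$-linearity of the curvature inserts the trailing coefficient $\pairing{x_p}{x}_\B$ (resp.\ leading ${}_\B\pairing{x}{x_p}$); the passage to the element of $\X\ox_\B\Lambda^2_\D\ox_\B\X$ via $z=x_p^\dag$ is also right. One small imprecision: the sum $\nablar\ox1+1\ox\d$ is still not well defined on the balanced tensor product (different representatives of $x b\ox\omega=x\ox b\omega$ differ by a term of the form $x\ox\Psi([\D,b]\ox\omega)$); it is only the composite with $1\ox(1-\Psi)$, i.e.\ $R^{\nablar}$ itself, that is representative-independent. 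This does not affect your computation, since you evaluate the full composite on a specific representative, but the justification should be phrased in terms of the composite rather than the bare sum.
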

\begin{proof}
This follows directly from \cite[Proposition 3.3]{MRC}.
\end{proof}

\begin{defn}
Let $(\Omega^{1}_{\D}(\B),\dag,\Psi,\pairing{\cdot}{\cdot})$ be a Hermitian differential structure and $\nablar$ a right connection on $\Omega^{1}_{\D}$ with curvature $R^{\nablar}\in \Omega^1_\D(\B)\ox\Lambda^2_\D(\B)\ox\Omega^1_\D(\B)$.
The Ricci curvature of $\nablar$ is
\[
{\rm Ric}^{\nablar}={}_\B\pairing{R^{\nablar}}{G}\in T^2_\D(\B)
\]
and the scalar curvature is
\[
r^{\nablar}=\pairing{G}{{\rm Ric}^{\nablar}}_\B.
\]
\end{defn}
These definitions mirror those of \cite{BMBook} and references therein, and agree when both apply.

We now recall from \cite{MRC} a class of spectral triples for which the Weitzenbock formula holds.  Given a left inner product module $\X$ and a positive functional $\phi:\B\to\C$, the Hilbert space $L^2(\X,\phi)$ is the completion of
$\X$ in the scalar product $\langle x,y\rangle:=\phi({}_\B\pairing{x}{y})$.
\begin{defn} 
\label{def: Dirac-spectral-triple}
Let $(\B,\H,\D)$ be a spectral triple equipped with a braided Hermitian differential structure $(\Omega^{1}_{\D}(\B),\dag,\Psi,\pairing{\cdot}{\cdot},\sigma)$. Then $(\B,\H,\D)$ is a \emph{Dirac spectral triple} relative to $(\Omega^{1}_{\D}(\B),\dag,\Psi,\pairing{\cdot}{\cdot},\sigma)$ 
if 
\begin{enumerate}[itemsep=0em]
\item for $\omega,\eta\in\Omega^{1}_{\D}(\B)$ we have
\begin{equation}
\label{eq:Clifford}
(m\circ\Psi)(\rho\ox\eta)=e^{-\beta}m(G)\pairing{\rho^\dag}{\eta}_\B=-e^{-\beta}m(G)g(\rho\ox\eta); 
\end{equation}
\item there is a left inner product module $\X$ over $\B$ and a positive functional $\phi:\B\to\mathbb{C}$ such that $\H=L^{2}(\X,\phi)$ and the natural map $c:\Omega^{1}_{\D}(\B)\otimes_{\B}L^{2}(\X,\phi)\to L^{2}(\X,\phi)$ restricts to a map $c:\Omega^{1}_{\D}(\B)\otimes_{\B}\X\to \X$;
\item there is a left connection $\nablal^{\X}:\X\to \Omega^{1}_{\D}(\B)\otimes_{\B}\X$ such that $\D=c\circ\nablal^{\X}:\X\to L^{2}(\X,\phi)$;
\item there is an Hermitian torsion free $\dag$-bimodule connection $(\nablar^{G},\sigma)$ on $\Omega^{1}_{\D}$ such that
\begin{align}
\D(\omega x)=c\circ\nablal^\X(c(\omega\ox x))&=c\circ (m\circ\sigma\ox 1)(\nablar^{G}(\omega)\ox x+\omega\ox\nablal^\X(x))
\label{eq:compatible2}
\end{align}
\end{enumerate}
\end{defn}
We note that for spectral triples of Dirac-type operators on Riemannian manifolds we have $e^{-\beta}m(G)={\rm Id}$, \cite[Lemma 4.4]{MRC}.
A spectral triple satisfying the conditions has a natural connection Laplacian 
$\Delta^\X$, \cite[Definition 4.3]{MRC}, defined on $x\in\X$ by
\begin{equation}
\Delta^\X(x)=e^{-\beta}m(G)\pairing{G}{(\nablar^G\ox1+1\ox\nablal^\X)\circ\nablal^\X(x)}_\B,
\label{eq:conn-lap}
\end{equation} 
and there is a Weitzenbock formula relating $\D^2$ and $\Delta^\X$.
\begin{thm}
\label{thm:Weitz1} 
Let $(\B,L^2(\X,\phi),\D)$ be a Dirac spectral triple relative to the braided Hermitian differential structure $(\Omega^{1}_{\D}(\B),\dag,\Psi,\pairing{\cdot}{\cdot},\sigma)$, and let $\Delta^{\X}$ be the connection Laplacian of the left connection $\nablal^\X$.
If $m\circ\sigma\circ\Psi=m\circ \Psi$ and $\Psi(G)=G$ then
\begin{align}
\label{eq:Weitzy}
\D^2(x)
&=\Delta^\X(x)+c\circ (m\circ\sigma\ox 1)\big(R^{\nablal^\X}(x)\big).
\end{align}
\end{thm}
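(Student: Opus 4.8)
The plan is to iterate the compatibility identity \eqref{eq:compatible2} so as to express $\D^2$ as a single application of $c\circ(m\circ\sigma\ox 1)$ to an element of $T^2_\D(\B)\ox_\B\X$, and then to split that element along the idempotent $\Psi$: the $(1-\Psi)$-part will reproduce the left curvature of Definition \ref{def:Riemann}, while the $\Psi$-part will reproduce the connection Laplacian \eqref{eq:conn-lap}. Fix $x\in\X$ and write $\nablal^\X(x)=\sum_i\omega_i\ox y_i$ with $\omega_i\in\Omega^1_\D(\B)$ and $y_i\in\X$; since $c$ restricts to a map $\Omega^1_\D(\B)\ox_\B\X\to\X$ the vector $\D(x)=c(\nablal^\X(x))$ again lies in $\X$, so the whole computation stays inside $\X$. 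Applying $\D$ a second time and invoking \eqref{eq:compatible2} on each summand gives
\[
\D^2(x)=c\circ(m\circ\sigma\ox 1)\circ(\nablar^G\ox 1+1\ox\nablal^\X)\circ\nablal^\X(x).
\]
Writing $\Theta:=(\nablar^G\ox 1+1\ox\nablal^\X)\circ\nablal^\X(x)\in T^2_\D(\B)\ox_\B\X$, I would decompose $\Theta=(\Psi\ox 1)\Theta+((1-\Psi)\ox 1)\Theta$ and treat the two pieces separately after applying $c\circ(m\circ\sigma\ox 1)$.

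For the $(1-\Psi)$-part I would show $((1-\Psi)\ox 1)\Theta=R^{\nablal^\X}(x)$. Since $(\nablar^G,\sigma)$ is torsion-free we have $(1-\Psi)\circ\nablar^G=-\d$, hence $((1-\Psi)\ox 1)(\nablar^G\ox 1)\nablal^\X(x)=-(\d\ox 1)\nablal^\X(x)$. The differential $\d$ takes values in $\Lambda^2_\D(\B)=(1-\Psi)T^2_\D(\B)$, so $(1-\Psi)\ox 1$ acts as the identity on $(\d\ox 1)\nablal^\X(x)$; comparing with Definition \ref{def:Riemann} then gives $((1-\Psi)\ox 1)\Theta=((1-\Psi)\ox 1)(1\ox\nablal^\X)\nablal^\X(x)-(\d\ox 1)\nablal^\X(x)=R^{\nablal^\X}(x)$. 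Applying $c\circ(m\circ\sigma\ox 1)$ therefore yields exactly the curvature term $c\circ(m\circ\sigma\ox 1)(R^{\nablal^\X}(x))$.

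For the $\Psi$-part I would use the hypothesis $m\circ\sigma\circ\Psi=m\circ\Psi$ together with the idempotency of $\Psi$ to obtain $c\circ(m\circ\sigma\ox 1)(\Psi\ox 1)\Theta=c\circ(m\circ\Psi\ox 1)\Theta$. Expanding the $T^2_\D(\B)$-leg of $\Theta$ as $\sum\rho\ox\eta$, the Clifford identity \eqref{eq:Clifford} rewrites $(m\circ\Psi)(\rho\ox\eta)=e^{-\beta}m(G)\pairing{G}{\rho\ox\eta}_\B$; acting by $c$ and using that $e^{-\beta}$ is central then identifies this contribution with $e^{-\beta}m(G)\pairing{G}{\Theta}_\B=\Delta^\X(x)$. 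The remaining hypothesis $\Psi(G)=G$, together with $\Psi=\Psi^*$, is what guarantees that $\pairing{G}{\cdot}_\B$ annihilates the component $((1-\Psi)\ox 1)\Theta$, so that the line element records precisely the metric (trace) direction and the Laplacian is cleanly separated from the curvature. Summing the two contributions gives \eqref{eq:Weitzy}.

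The main obstacle is the bookkeeping in the $(1-\Psi)$-part: one must track the three tensor legs of $T^2_\D(\B)\ox_\B\X$, confirm that torsion-freeness converts the first-leg derivative $\nablar^G$ into $-\d$, and verify that the $\d$-term is already fixed by $(1-\Psi)\ox 1$, so that the combination collapses to exactly the left curvature $R^{\nablal^\X}$ of Definition \ref{def:Riemann} rather than to a nearby expression. A secondary subtlety is that $c$ realises Clifford multiplication through the operator product on $\H$, so the identification of the $\Psi$-part with $\Delta^\X$ must respect the right $\B$-module structure of $\X$; this is precisely where the centrality of $e^{-\beta}$ and the two hypotheses $m\circ\sigma\circ\Psi=m\circ\Psi$ and $\Psi(G)=G$ do their work, decoupling the Laplacian direction from the curvature direction.
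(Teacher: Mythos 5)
Your argument is correct, and it is the expected one: iterate the compatibility relation \eqref{eq:compatible2} to write $\D^2(x)=c\circ(m\circ\sigma\ox 1)\circ(\nablar^G\ox1+1\ox\nablal^\X)\circ\nablal^\X(x)$, then split along $\Psi\ox1$, using torsion-freeness $(1-\Psi)\circ\nablar^G=-\d$ to identify the $(1-\Psi)$-part with $R^{\nablal^\X}(x)$ and the hypothesis $m\circ\sigma\circ\Psi=m\circ\Psi$ together with the Clifford identity \eqref{eq:Clifford} to identify the $\Psi$-part with $\Delta^\X(x)$ as defined in \eqref{eq:conn-lap}. Note that the present paper does not prove this theorem at all but recalls it from \cite{MRC}, so there is no in-text proof to diverge from; your reconstruction is a faithful version of the argument given there, and your gloss on $\Psi(G)=G$ (ensuring $\pairing{G}{\cdot}_\B$ vanishes on the range of $(1-\Psi)\ox1$, so the Laplacian and curvature directions decouple) is the correct reading of that hypothesis.
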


In Section \ref{sec:weitzy} we will check that these conditions hold for the Podle\'{s} sphere spectral triple, and then derive the Weitzenbock formula in that case. We also use \cite[Corollary 4.9]{MRC} to show that $\Delta^\X\geq0$.

\section{The Levi-Civita connection for the Podle\'{s} sphere}
\label{sec:LC-pods}

For the remainder of this paper we will study connections on the one-forms
$\Omega^1_\D(\B)=\{\sum a^i[\D,b^i]:\,a^i,b^i\in\B\}$ of a specific spectral triple $(\B,\H,\D)$. Here $\B$ is the (coordinate algebra of) the Podle\'{s} sphere, which we describe along with the spectral triple  below. In the Appendix we will require a completion of $\B$ to use the framework of \cite{MRLC} to address the uniqueness of the algebraic Hermitian torsion-free connection we construct below.

\subsection{Coordinates, spectral triple and inner product for Podle\'s sphere}
\label{sec:pods}

We start with the polynomial algebra $\mathcal{A}:=\mathcal{O}(SU_{q}(2))$ on quantum $SU(2)$ spanned by the
matrix elements $t^l_{ij}$, with $l\in \frac{1}{2}\N$ and 
$i,\,j\in\{-l,-l+1,\dots,l-1,l\}$. While the coproduct is easy to describe in
this picture
$$
\Delta(t^l_{ij})=\sum_kt^l_{ik}\ox t^l_{kj},
$$
the product  involves (quantum) Clebsch-Gordan coefficients.
We summarise the basic algebraic relations we require using the conventions of \cite{RS}. 
The generators and relations of $\mathcal{O}(SU_q(2))$ are
\begin{equation}
\begin{array}{c}
a b=q b a, \quad a c=q c a, \quad b d=q d b, \quad c d=q d c, \quad b c=c b \\
a d=1+q b c, \quad d a=1+q^{-1} b c
\end{array}
\end{equation}
with adjoints
\begin{equation}
a^{*}=d, \quad b^{*}=-q c, \quad c^{*}=-q^{-1} b, \quad d^{*}=a
\end{equation}
The relation to the matrix elements of the defining corepresentation is 
\begin{equation}
a=t_{-\frac{1}{2},-\frac{1}{2}}^{\frac{1}{2}}, \quad b=t_{-\frac{1}{2}, \frac{1}{2}}^{\frac{1}{2}}, \quad c=t_{\frac{1}{2},-\frac{1}{2}}^{\frac{1}{2}}, \quad \quad d=t_{\frac{1}{2}, \frac{1}{2}}^{\frac{1}{2}} .
\end{equation}
The $C^{*}$-algebra  $C(SU_{q}(2))$ and polynomial algebra $\mathcal{O}(SU_{q}(2))$ carry a one parameter group of automorphisms which for $s\in\R$ is given by  $U_{s}(t^l_{ij})=q^{\sqrt{-1}sj}t^l_{ij}$. The one parameter group is periodic and thus gives an action of the circle $\mathbb{T}$. The fixed point $C^*$-algebra $C(SU_{q}(2))^{\mathbb{T}}$ for this circle action is the $C^{*}$-algebra $C(S^{2}_{q})$ of the Podle\'{s} sphere, 
$$
C(S^{2}_{q}):=C(SU_{q}(2))^{\mathbb{T}}\subset C(SU_{q}(2)).
$$ 
For future reference we recall the definition of the $q$-numbers 
$$
[x]_q=\frac{q^{-x}-q^x}{q^{-1}-q}.
$$
The Podle\'s sphere $\B:=\mathcal{O}(S^{2}_{q})\subset C(S^2_q)$ is the polynomial dense $*$-subalgebra of $C(S^{2}_{q})$ spanned by the matrix elements $t^l_{i0}$.
The generators of the Podle\'s sphere are
\begin{align*}
A&=-q^{-1}bc=c^*c=t^{1/2*}_{1/2,-1/2}t^{1/2}_{1/2,-1/2}=q^{-2}t^{1/2}_{-1/2,1/2}t^{1/2*}_{-1/2,1/2}=-q^{-1}[2]_q^{-1}t^1_{00}\\
B&=ac^*=-q^{-1}ab=t^{1/2}_{-1/2,-1/2}t^{1/2*}_{1/2,-1/2}=-q^{-1/2}[2]_q^{-1/2}t^1_{-10}\\
B^*&=cd=t^{1/2}_{1/2,-1/2}t^{1/2}_{1/2,1/2}=q^{-1/2}t^{1/2}_{1/2,-1/2}t^{1/2*}_{-1/2,-1/2}=[2]_q^{-1/2}q^{1/2}t^1_{10}.
\end{align*}
The equalities with matrix elements $t^1_{k0}$ come from Clebsch-Gordan relations summarised in \cite[Appendix A]{S}.
The generators $A,B,B^*$ obey the relations
\[
B A=q^{2} A B, \quad A B^{*}=q^{2} B^{*} A, \quad B^{*} B=A-A^{2}, \quad B B^{*}=q^{2} A-q^{4} A^{2}.
\]

The spinor module $S=S^+\oplus S^-$ is realised as the direct sum of the finitely generated projective modules $S^{\pm}:=P_{\pm}C(S^{2}_{q})^{\oplus2}$. In our conventions \cite[Section 4.3]{S} the projections $P_{\pm}$ are given by
\[
P_+=\begin{pmatrix} t^{1/2}_{1/2,1/2}t^{1/2*}_{1/2,1/2} & t^{1/2}_{1/2,1/2}t^{1/2*}_{-1/2,1/2}\\ t^{1/2}_{-1/2,1/2}t^{1/2*}_{1/2,1/2} & t^{1/2}_{-1/2,1/2}t^{1/2*}_{-1/2,1/2}\end{pmatrix}
=\begin{pmatrix} 1-A & -B^*\\ -B & q^2A\end{pmatrix},
\]
and
\[ 
P_{-}=1-P_{+}=\begin{pmatrix} t^{1/2}_{1/2,-1/2}t^{1/2*}_{1/2,-1/2} & t^{1/2}_{1/2,-1/2}t^{1/2*}_{-1/2,-1/2}\\ t^{1/2}_{-1/2,-1/2}t^{1/2*}_{1/2,-1/2} & t^{1/2}_{-1/2,-1/2}t^{1/2*}_{-1/2,-1/2} \end{pmatrix}
=\begin{pmatrix} A & B^*\\ B & 1-q^2A\end{pmatrix}.
\]
Observe that $P_++P_{-}={\rm Id}_2$ and $P_+P_{-}=0$. Hence $\End_{C(S^{2}_{q})}(S^+\oplus S^-)\cong M_2(C(S^{2}_{q}))$.

 The (smooth sections of the) spinor bundle over the Podle\'s sphere is the module
$$
\mathcal{S}=\mathcal{S}^+\oplus \mathcal{S}^-={\rm span}\{(t^l_{i,1/2}b_+,t^l_{i,-1/2}b_-)^T:\,b_\pm\in \B\},
$$
with (right) $\B$-valued inner product 
$$\pairing{(w_+,w_-)^T}{(z_+,z_-)^T}_\B=w_{+}^*z_++w_{-}^*z_-.$$ The formula for the multiplication in terms of Clebsch-Gordan coefficients shows that $\mathcal{S}$ is also a left $\B$-module.
Together with the Haar state $h:C(SU_q(2))\to\C$ we can then build
 a Hilbert space $\H=L^2(S,h)$ which carries a left representation 
 of $\B$ (see \cite{S} for formulae in our conventions). 
 
 The natural Dirac operator yielding a spectral triple $(\B,\H,\D)$ is \cite{DS,NT}
 $$
 \D=\begin{pmatrix} 0 & \partial_e\\ \partial_f & 0\end{pmatrix}
 $$
where $\partial_{e}t^l_{i,j}=\sqrt{[l+1/2]_q^2-[j+1/2]_q^2}\,\,t^l_{i,j+1}$
and $\partial_{f}t^l_{i,j}=\sqrt{[l+1/2]_q^2-[j-1/2]_q^2}\,\,t^l_{i,j-1}$.

We will abbreviate the coefficients in $\partial_e,\partial_f$ as
$
\kappa^l_k=\sqrt{[l+1/2]_q^2-[k-1/2]_q^2}.
$

With $\partial_k(t^l_{ij}):=q^{j}t^l_{ij}$, and $a,b\in \mathcal{O}(SU_q(2))$ we have
\begin{equation}
\partial_e(ab)=\partial_e(a)\partial_k(b)+\partial_k^{-1}(a)\partial_e(b)\qquad \partial_f(ab)=\partial_f(a)\partial_k(b)+\partial_k^{-1}(a)\partial_f(b).
\label{eq:bi-twisted}
\end{equation}
For $b\in\B$ we have $\partial_e\partial_f(b)=\partial_f\partial_e(b)$.
For $b\in \B$, the commutator of $\D$ with the left multiplication by $b$ is
\begin{equation}
\dee(b):=[\D,b]=\begin{pmatrix} 0 & q^{-1/2}\partial_e(b)\\ q^{1/2}\partial_f(b) & 0\end{pmatrix}.
\label{eq:dee-comm}
\end{equation}

We denote by $\CDB\subset\mathbb{B}(\H)$ the
``Clifford algebra'' generated by $\B$ 
and commutators $[\D,\B]$.
We define an operator-valued weight $\Phi:\CDB\to\B$ by
$$
\Phi(\rho)=\Tr\left(\begin{pmatrix} q & 0\\ 0 & q^{-1}\end{pmatrix} \rho\right).
$$
The functional $\Phi$ 
gives a right inner product $\pairing{\rho}{\eta}_\B:=\Phi(\rho^*\eta)$ on the Clifford algebra which restricts to one-forms
as
\begin{align}
\pairing{[\D,b_1]a_1}{[\D,b_2]a_2}_\B
&=a_1^*\Tr\left(\begin{pmatrix} q & 0\\ 0 & q^{-1}\end{pmatrix}[\D,b_1]^\dag[\D,b_2]\right)a_2\nonumber\\
&=a_1^*(q^2\partial_f(b_1)^*\partial_f(b_2)+q^{-2}\partial_e(b_1)^*\partial_e(b_2))a_2.
\label{eq:inner-prod}
\end{align}
Other metrics could be considered, and are in \cite[8.47]{BMBook}, but we will focus on this one. This metric reduces to the round metric on $S^2$ for $q=1$, and like the classical round metric, we will see that the metric \eqref{eq:inner-prod} enjoys an exact frame. 

\begin{rmk}
For $q=1$ we should consider $\Phi(T)=\frac{1}{2}\Tr(T)$ for $T\in\End^*_\B(S)$ so that $\Phi(b)=b$ for $b\in\B$. This is to access the actual (inverse) metric via $\Phi((\dee x^\mu)^*\dee x^\nu)=g^{\mu\nu}$. This means that our current definition scales the inverse metric by 2, and so the metric $g_{\mu\nu}$ by $\frac{1}{2}$. This is compatible with using the Dolbeault Laplacian $(\sqrt{2}(\partial+\bar{\partial}))^2$, \cite[Section 3.1]{H05}. The combined effect of not normalising $\Phi$ and using the Dolbealt Dirac cancel out, and we will see that the inner product on one-forms corresponds to the metric of radius 1 when $q=1$.
\end{rmk}

We recall the following facts as we will use them 
extensively in the computations to come:
\begin{equation}
(t^l_{ij})^*=(-q)^{j-i}t^l_{-i,-j}\quad\mbox{and}\quad 
t^l_{ij}=(-q)^{j-i}(t^l_{-i,-j})^*.
\label{eq:q-adjoints}
\end{equation}
\begin{equation}
\delta_{ij}=\sum_{p=-l}^l(t^l_{pi})^*t^l_{pj}=\sum_{p=-l}^lt^l_{ip}(t^l_{jp})^*\quad \mbox{Orthogonality relations.}
\label{eq:q-orthog}
\end{equation}
\begin{equation}
\kappa_1^1=\kappa^1_0=[2]_q^{1/2}=\sqrt{q+q^{-1}}\qquad \kappa^l_1=\kappa^l_0.
\label{eq:q-kappa}
\end{equation}

\begin{lemma}
Using the spectral triple $(\B,\H,\D)$, we define $\dee:\B\to\Omega^1_\D(\B)$ as $\dee(b)=[\D,b]$. Then $\Omega^1_\D(\B)$ is a first order differential structure and a $\dag$-bimodule.
\end{lemma}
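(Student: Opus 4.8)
The plan is to check the two assertions directly, after confirming that $(\B,\H,\D)$ satisfies Definition~\ref{defn:NDS}, so that the general constructions recalled in Section~\ref{sec:ass1} apply verbatim. Self-adjointness (and regularity) of $\D$ is part of the data taken from \cite{DS,NT}. Boundedness of the commutators is elementary in this setting: any $b\in\B$ is a finite linear combination of the $t^l_{i0}$, so $\partial_e(b)$ and $\partial_f(b)$ are polynomial elements of $\O(SU_q(2))$ and hence bounded multiplication operators on $\H$; by the explicit matrix form \eqref{eq:dee-comm}, $[\D,b]$ is then a bounded operator, and clearly $b\cdot\mathrm{dom}(\D)\subset\mathrm{dom}(\D)$.

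First I would verify that $\Omega^1_\D(\B)$ is a $\B$-bimodule and that $\dee=[\D,\cdot]$ is a derivation. The left action is built into the definition $\Omega^1_\D(\B)=\mathrm{span}\{a[\D,b]\}$. For the right action and the Leibniz rule, the commutator identities $[\D,ab]=a[\D,b]+[\D,a]b$ and $[\D,b]a=[\D,ba]-b[\D,a]$ show, respectively, that $\dee(ab)=\dee(a)b+a\,\dee(b)$ and that $[\D,b]a\in\Omega^1_\D(\B)$; the latter extends by linearity to give $\omega a\in\Omega^1_\D(\B)$ for every $\omega\in\Omega^1_\D(\B)$. Thus $\dee:\B\to\Omega^1_\D(\B)$ is a first order differential calculus over the bimodule $\Omega^1_\D(\B)$.

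The essential point is that the operator adjoint restricts to an antilinear involution on $\Omega^1_\D(\B)$. Using $\D=\D^*$ one obtains $[\D,b]^*=-[\D,b^*]$, so that $(a[\D,b])^\dag=[\D,b]^*a^*=-[\D,b^*]a^*$ lies in $\Omega^1_\D(\B)$ by the right-module identity just established. Because $\dag$ is literally the Hilbert space adjoint, it is automatically well defined on the span (a vanishing operator has vanishing adjoint), antilinear, and satisfies $\dag^2=\mathrm{id}$, while $(xy)^*=y^*x^*$ yields the bimodule compatibility $(a\omega b)^\dag=b^*\omega^\dag a^*$ required by \cite[Definition 2.8]{MRLC}. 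Assembling these facts shows that $(\Omega^1_\D(\B),\dag)$ is a first order differential structure and a $\dag$-bimodule. I expect no real obstacle here: everything reduces to the two commutator identities together with the self-adjointness relation $[\D,b]^*=-[\D,b^*]$, the only mildly analytic input being the boundedness of $[\D,b]$, which is immediate from the polynomial nature of $\partial_e(b)$ and $\partial_f(b)$.
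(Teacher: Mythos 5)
Your proof is correct and is exactly the routine verification the paper leaves implicit: the lemma is stated without proof, being regarded as an immediate consequence of the Leibniz identity $[\D,ab]=a[\D,b]+[\D,a]b$, the relation $[\D,b]^*=-[\D,b^*]$ from self-adjointness of $\D$, and the boundedness of $[\D,b]$ visible from Equation \eqref{eq:dee-comm}. Nothing in your argument deviates from, or goes beyond, what the paper takes for granted.
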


\subsection{Frame and quantum metric for the Podle\'s sphere}
\label{sec: qmetric}
We now define a finite exact frame for $\Omega^1_\D(\B)$. We will use this frame in all our  computations.
\begin{lemma}
\label{lem:pods-frame}
A frame for the right $\B$-module $\Omega^1_\D(\B)$ is given by
the three module elements
\begin{align}
\omega_j&=q^{-2+j}(\kappa^1_1)^{-1}[\D,t^1_{-2+j,0}]
=q^{-2+j}\begin{pmatrix} 0 & q^{-1/2}t^1_{-2+j,1}\\ q^{1/2}t^1_{-2+j,-1} & 0\end{pmatrix}\label{eq:omega}\\
&=(-1)^{1-j}\begin{pmatrix} 0 & q^{1/2}(t^1_{2-j,-1})^*\\ q^{-1/2}(t^1_{2-j,1})^* & 0\end{pmatrix},\qquad j=1,2,3.
\label{eq:omega-alt}
\end{align}
\end{lemma}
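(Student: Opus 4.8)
The plan is to verify the frame reconstruction identity $\rho=\sum_{j=1}^{3}\omega_{j}\pairing{\omega_{j}}{\rho}_\B$ directly for every one-form $\rho$, exploiting the fact that every element of $\Omega^1_\D(\B)$ is an off-diagonal matrix. First I would record the preliminary check that the two expressions \eqref{eq:omega} and \eqref{eq:omega-alt} for $\omega_j$ agree: this follows from the twisted adjoint relation \eqref{eq:q-adjoints}, the identities $\partial_e(t^1_{-2+j,0})=\kappa^1_1 t^1_{-2+j,1}$ and $\partial_f(t^1_{-2+j,0})=\kappa^1_0 t^1_{-2+j,-1}$, and the normalisation $\kappa^1_0=\kappa^1_1$ from \eqref{eq:q-kappa}, after tracking the powers of $-q$. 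In particular each $\omega_j$ is a scalar multiple of $[\D,t^1_{-2+j,0}]$, hence an exact element of $\Omega^1_\D(\B)$.

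Since the reconstruction map $\rho\mapsto\sum_j\omega_j\pairing{\omega_j}{\rho}_\B$ is $\C$-linear and $\B$ acts diagonally while $[\D,b]$ is off-diagonal by \eqref{eq:dee-comm}, every $\rho\in\Omega^1_\D(\B)$ has the form $\rho=\begin{pmatrix}0 & q^{-1/2}x\\ q^{1/2}y & 0\end{pmatrix}$ for some algebra elements $x,y$, so it suffices to prove the identity for such $\rho$. Writing $\omega_j$ in the form \eqref{eq:omega} and using $\pairing{\omega_j}{\rho}_\B=\Phi(\omega_j^{*}\rho)$ with $\Phi(\cdot)=\Tr(\mathrm{diag}(q,q^{-1})\,\cdot\,)$, I would compute
\[
\pairing{\omega_j}{\rho}_\B=q^{-2+j}\big(q^{2}(t^1_{-2+j,-1})^{*}y+q^{-2}(t^1_{-2+j,1})^{*}x\big).
\]

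Substituting this back and setting $m=-2+j\in\{-1,0,1\}$, the sum $\sum_j\omega_j\pairing{\omega_j}{\rho}_\B$ is again off-diagonal, and its top-right and bottom-left entries are governed by the four $q$-twisted sums
\[
\sum_m q^{2m-2}t^1_{m,1}(t^1_{m,1})^{*},\quad \sum_m q^{2m+2}t^1_{m,1}(t^1_{m,-1})^{*},\quad \sum_m q^{2m+2}t^1_{m,-1}(t^1_{m,-1})^{*},\quad \sum_m q^{2m-2}t^1_{m,-1}(t^1_{m,1})^{*},
\]
which multiply $x$, $y$, $y$ and $x$ respectively. The crux is that the first and third equal $1$ while the second and fourth vanish, so that the reconstructed entries are exactly $q^{-1/2}x$ and $q^{1/2}y$, i.e. $\sum_j\omega_j\pairing{\omega_j}{\rho}_\B=\rho$. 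Each sum contracts the \emph{first} index $m$, whereas the orthogonality relations \eqref{eq:q-orthog} contract the second index; I would bridge this gap with \eqref{eq:q-adjoints}, which rewrites $t^1_{m,n}(t^1_{m,n'})^{*}$ as $(-q)^{n+n'-2m}(t^1_{-m,-n})^{*}t^1_{-m,-n'}$. The prefactor $(-q)^{n+n'-2m}$ then combines with the weight $q^{2m\pm2}$ to leave a constant, and reindexing $p=-m$ turns each sum into a multiple of $\sum_p (t^1_{p,\cdot})^{*}t^1_{p,\cdot}=\delta$ via \eqref{eq:q-orthog}, yielding $1$ on the two diagonal sums and $0$ on the two cross sums.

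The main obstacle is purely bookkeeping: keeping track of the powers of $q$ and the signs in $(-q)$ when converting the first-index sums into second-index sums. The whole argument hinges on these powers cancelling exactly against the metric weights $q^{\pm2}$ produced by $\Phi$, so that the surviving contributions are precisely Kronecker deltas; once this cancellation is checked, the off-diagonal cross terms disappear, $\rho$ is recovered exactly, and the frame property follows with no further analytic input.
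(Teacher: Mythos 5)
Your proposal is correct and follows essentially the same route as the paper's proof: both verify the reconstruction identity $\rho=\sum_j\omega_j\pairing{\omega_j}{\rho}_\B$ for a general off-diagonal $\rho$ by computing $\pairing{\omega_j}{\rho}_\B$ explicitly, then using the twisted adjoint relation \eqref{eq:q-adjoints} to convert the first-index sums into second-index sums so that the orthogonality relations \eqref{eq:q-orthog} reduce the four resulting $q$-weighted sums to $1,1,0,0$. The only difference is organisational (your parametrisation $\rho_\pm=q^{\mp1/2}x,q^{\pm1/2}y$ and isolation of the four scalar sums), and your prefactor bookkeeping checks out.
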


\begin{proof}
We first compute $\omega_j^\dag$ using the formula for adjoints
\eqref{eq:q-adjoints} and the orthogonality relations \eqref{eq:q-orthog}, finding that
\begin{equation}
\omega_j^\dag=(-1)^{1-j}\begin{pmatrix} 0 & q^{-1/2}t^1_{2-j,1}\\ q^{1/2}t^1_{2-j,-1} & 0\end{pmatrix}.
\label{eq:omega-adjoint}
\end{equation}
Then for $\rho =\big(\begin{smallmatrix} 0&\rho_+\\\rho_-&0\end{smallmatrix}\big)\in\Omega^1_\D(\B)$ the orthogonality relations \eqref{eq:q-orthog} and definition of the adjoints \eqref{eq:q-adjoints} yield
\begin{align*}
&\sum_j\omega_j\pairing{\omega_j}{\rho}_\B
=\sum_j\omega_jq^{-2+j}(q^{3/2}(t^{1}_{-2+j,-1})^*\rho_-+q^{-3/2}(t^{1}_{-2+j,1})^*\rho_+)\\
&=\sum_jq^{-4+2j}\begin{pmatrix} \hspace{-60mm}0 & \hspace{-60mm}qt^1_{-2+j,1}(t^{1}_{-2+j,-1})^*\rho_-+q^{-2}t^1_{-2+j,1}(t^{1}_{-2+j,1})^*\rho_+\\
q^{2}t^1_{-2+j,-1}(t^{1}_{-2+j,-1})^*\rho_-+q^{-1}t^1_{-2+j,-1}(t^{1}_{-2+j,1})^*\rho_+&\hspace{-5mm}0
\end{pmatrix}\\
&=\sum_jq^{-4+2j}\begin{pmatrix} \hspace{-90mm}0 &  \hspace{-80mm}q^{}(-q)^{4-2j}(t^1_{2-j,-1})^*t^{1}_{2-j,1}\rho_-+q^{-2}(-q)^{6-2j}(t^{1}_{2-j,-1})^*t^{1}_{2-j,-1}\rho_+ \\
q^{2}(-q)^{2-2j}(t^{1}_{2-j,1})^*t^{1}_{2-j,1}\rho_-+q^{-1}(-q)^{4-2j}(t^{1}_{2-j,1})^*t^{1}_{2-j,-1}\rho_+&\hspace{-8mm}0
\end{pmatrix}\\
&=\sum_j\begin{pmatrix} \hspace{-60mm}0 & \hspace{-60mm}q^{}(t^1_{2-j,-1})^*t^{1}_{2-j,1}\rho_-+(t^{1}_{2-j,-1})^*t^{1}_{2-j,-1}\rho_+\\
(t^{1}_{2-j,1})^*t^{1}_{2-j,1}\rho_-+q^{-1}(t^{1}_{2-j,1})^*t^{1}_{2-j,-1}\rho_+&\hspace{-10mm}0
\end{pmatrix}\\
&=\begin{pmatrix} 0 & \rho_+\\
\rho_- & 0\end{pmatrix}=\rho. \qedhere
\end{align*}
\end{proof}

\begin{corl}
\label{cor:Wat-index}
The Watatani index $e^\beta:=\sum_j{}_\B\pairing{\omega_j}{\omega_j}$ \cite{KajPinWat} of $\Omega^1_\D(\B)$ is $e^\beta:=q^2+q^{-2}$ and the line element (or quantum metric \cite{BMBook}) is
\[
G=\sum_j\omega_j\ox\omega_j^\dag=\sum_jq\begin{pmatrix} 0 & (t^1_{2-j,-1})^*\\0&0\end{pmatrix}\ox\begin{pmatrix} 0 & 0\\t^1_{2-j,-1}&0\end{pmatrix}+q^{-1}\begin{pmatrix} 0 & 0\\(t^1_{2-j,1})^*&0\end{pmatrix}\ox\begin{pmatrix} 0 & t^1_{2-j,1}\\0&0\end{pmatrix}.
\]
Normalising gives $Z:=e^{-\beta/2}G$
and $z:=m(Z)=e^{-\beta/2}\begin{pmatrix}q &0\\ 0 & q^{-1}\end{pmatrix}$. 
\end{corl}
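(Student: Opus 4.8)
The plan is to obtain all three assertions by a direct computation from the explicit frame of Lemma~\ref{lem:pods-frame}, its adjoint \eqref{eq:omega-adjoint}, and the orthogonality relations \eqref{eq:q-orthog}. The index $e^\beta$ and the normalisation $z=m(Z)$ are pure bookkeeping; the one genuinely delicate point is pinning down the precise two-term shape of the line element $G$ inside the balanced tensor product $T^2_\D$.

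For the Watatani index I would use ${}_\B\pairing{\omega_j}{\omega_j}=\pairing{\omega_j^\dag}{\omega_j^\dag}_\B$. From the definition $\pairing{\rho}{\eta}_\B=\Phi(\rho^*\eta)$ one finds that on one-forms $\rho=\big(\begin{smallmatrix}0&\rho_+\\\rho_-&0\end{smallmatrix}\big)$ the inner product collapses to $\pairing{\rho}{\eta}_\B=q\,\rho_-^*\eta_-+q^{-1}\rho_+^*\eta_+$. Substituting the entries of $\omega_j^\dag$ from \eqref{eq:omega-adjoint}, the sign $(-1)^{1-j}$ squares to $1$ and the $q$-powers collect to $q^2(t^1_{2-j,-1})^*t^1_{2-j,-1}+q^{-2}(t^1_{2-j,1})^*t^1_{2-j,1}$. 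Summing over $j=1,2,3$ the first index $2-j$ runs through $\{-1,0,1\}$, so each orthogonality relation \eqref{eq:q-orthog} returns $1$ and $e^\beta=q^2+q^{-2}$.

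For $G=\sum_j\omega_j\ox\omega_j^\dag$ I would expand each summand using the starred presentation \eqref{eq:omega-alt} of $\omega_j$ together with \eqref{eq:omega-adjoint}; the two signs cancel and, writing $e_{12},e_{21}$ for the off-diagonal matrix units, each product breaks into four corner terms. Two of them, $q\,(t^1_{2-j,-1})^*e_{12}\ox t^1_{2-j,-1}e_{21}$ and $q^{-1}(t^1_{2-j,1})^*e_{21}\ox t^1_{2-j,1}e_{12}$, are exactly the summands displayed for $G$. The content of the statement is that the two remaining ``same-corner'' families $\sum_j(t^1_{2-j,-1})^*e_{12}\ox t^1_{2-j,1}e_{12}$ and $\sum_j(t^1_{2-j,1})^*e_{21}\ox t^1_{2-j,-1}e_{21}$ vanish in $T^2_\D$.

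Proving this vanishing is the main obstacle. Since the entries $t^1_{2-j,\pm1}$ carry nonzero circle-weight they do not lie in $\B$, so one cannot simply carry one factor across the $\B$-balanced tensor and apply orthogonality. I would instead use the splitting of $\Om^1_\D(\B)$ as a $\B$-bimodule into its $(1,2)$- and $(2,1)$-corners, which are the weight $\pm1$ line modules inside $\A=\mathcal{O}(SU_q(2))$. As the quantum Hopf fibration is strongly graded, multiplication of matrix entries embeds the tensor square of a corner into the weight $\pm2$ module of $\A$; under this injection the first same-corner family maps to $\sum_j(t^1_{2-j,-1})^*t^1_{2-j,1}=\delta_{-1,1}=0$ by \eqref{eq:q-orthog}, so the family itself is zero, and symmetrically for the second. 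This establishes the stated form of $G$. Finally $m(G)=\sum_j\omega_j\omega_j^\dag$ is diagonal with entries $q\sum_j(t^1_{2-j,-1})^*t^1_{2-j,-1}=q$ and $q^{-1}\sum_j(t^1_{2-j,1})^*t^1_{2-j,1}=q^{-1}$, and multiplying by $e^{-\beta/2}=(q^2+q^{-2})^{-1/2}$ gives $z$. As a consistency check, evaluating $-g(G)$ through \eqref{eq:Riemann} on the two-term form returns $q^2+q^{-2}=e^\beta$, in agreement with \eqref{eq:ee-beta}.
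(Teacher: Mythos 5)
Your computation is correct and in outline it is the paper's own proof: a direct expansion of $\omega_j\ox\omega_j^\dag$ using \eqref{eq:omega-alt} and \eqref{eq:omega-adjoint} followed by the orthogonality relations \eqref{eq:q-orthog} (the paper simply computes $m(G)=\sum_j\omega_j\omega_j^\dag=\mathrm{diag}(q,q^{-1})$ and applies $\Phi$ to get $e^\beta$, which is equivalent to your route through $\pairing{\omega_j^\dag}{\omega_j^\dag}_\B$). What you add, and what the paper's two-line proof leaves entirely implicit, is the observation that the two ``same-corner'' families $\sum_j(t^1_{2-j,-1})^*E_{12}\ox t^1_{2-j,1}E_{12}$ and $\sum_j(t^1_{2-j,1})^*E_{21}\ox t^1_{2-j,-1}E_{21}$ must vanish in $T^2_\D$ for the displayed two-term expression for $G$ to be literally correct -- this is genuinely the only delicate point, since these terms are invisible to $m$ (as $E_{12}^2=E_{21}^2=0$) but not a priori zero as two-tensors. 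Your justification via strong gradedness of the quantum Hopf fibration, i.e.\ injectivity of multiplication $\A_1\ox_\B\A_1\to\A_2$ on the weight-graded pieces, is valid but imports an external fact. A more self-contained argument, in the same spirit as the manipulations the paper performs in Lemma \ref{lem:kernel} and in the Grassmann connection computation, is to insert $1=\sum_p(t^1_{p,1})^*t^1_{p,1}$ on the right of the second tensor factor: since $t^1_{2-j,1}(t^1_{p,1})^*$ has circle weight $0$ it lies in $\B$ and may be moved across $\ox_\B$, after which $\sum_j(t^1_{2-j,-1})^*t^1_{2-j,1}=\delta_{-1,1}=0$ kills the family outright (and symmetrically for the other one). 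Either way the statement follows, and your closing consistency check that $-g(G)=q^2+q^{-2}=e^\beta$ against \eqref{eq:ee-beta} is a worthwhile sanity check.
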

\begin{proof}
The first statement comes from 
Equations \eqref{eq:omega-alt} and \eqref{eq:omega-adjoint} for the frame elements, and the orthogonality relations \eqref{eq:q-orthog}. Applying the multiplication map
gives
\[
\sum_j\omega_j\omega_j^\dag
=\begin{pmatrix}q &0\\ 0 & q^{-1}\end{pmatrix}.
\]
Applying $\Phi$ gives
$
\sum_j{}_\B\pairing{\omega_j}{\omega_j}=q^2+q^{-2}.
$
\end{proof}

Observe that $\pairing{Z}{Z}_\B^{T^2_\D}=1$, so that $\ketbra{Z}{Z}$ is the orthongal projection onto $\textnormal{span } G$.

\subsection{Existence of an Hermitian torsion-free connection}
\label{sec:junk-diff}

Since our frame consists of exact one-forms (i.e. of the form $[\D,b]$ for $b\in\B$), Theorem \ref{thm:existence} applies to $\Omega^{1}_{\D}(\B)$. Hence the junk two-tensors are given by the bimodule generated by $\nablar^{G}\circ\dee(\B)$ where $\nablar^{G}$ is the Grassmann connection of the exact frame. Moreover, we know from $\nablar^{G}$ is Hermitian and in order to establish that it is also torsion-free, it suffices to show that $JT^{2}_{\D}\subset T^{2}_{\D}$ is a complemented submodule. 
Thus our immediate aim is to compute the Grassmann connection on exact forms, and determine the junk bimodule and its complement.
\begin{lemma}
The Grassmann connection $\nablar^{G}$ of the frame from Lemma \ref{lem:pods-frame} applied to an exact form $[\D,b]$ is 
\begin{align}
\nablar^{G}([\D,b])&=G\partial_e\partial_f(b)
+\sum_{j,k}\begin{pmatrix} 0 & (t^1_{2-j,-1})^*\\ 0 & 0\end{pmatrix}\ox\begin{pmatrix} 0 & t^1_{2-j,-1}(t^2_{k,-2})^*\\0&0\end{pmatrix}t^2_{k,-2}\partial_e^2(b)\nonumber\\
&\qquad\qquad+\sum_{j,k}\begin{pmatrix} 0&0\\(t^1_{2-j,1})^*&0\end{pmatrix}\ox \begin{pmatrix} 0 & 0\\ t^1_{2-j,1}(t^2_{k,2})^*&0\end{pmatrix}t^2_{k,2}\partial_f^2(b).
\label{eq:junk-decomp}
\end{align}
\end{lemma}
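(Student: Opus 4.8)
The plan is to evaluate the Grassmann connection directly from its definition,
\[
\nablar^{G}([\D,b])=\sum_j\omega_j\ox[\D,\pairing{\omega_j}{[\D,b]}_\B],
\]
and then to reorganise the result into a metric part and two junk parts. First I would compute the $\B$-valued coefficients $\psi_j:=\pairing{\omega_j}{[\D,b]}_\B$. Writing $\omega_j=q^{-2+j}(\kappa^1_1)^{-1}[\D,t^1_{-2+j,0}]$ and using \eqref{eq:inner-prod} together with $\partial_e(t^1_{-2+j,0})=\kappa^1_1t^1_{-2+j,1}$, $\partial_f(t^1_{-2+j,0})=\kappa^1_0t^1_{-2+j,-1}$ and $\kappa^1_0=\kappa^1_1$ from \eqref{eq:q-kappa}, the $\kappa$-factors cancel and one obtains
\[
\psi_j=q^{j}(t^1_{-2+j,-1})^*\partial_f(b)+q^{j-4}(t^1_{-2+j,1})^*\partial_e(b).
\]

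Next I would compute the exact one-form $[\D,\psi_j]$ from \eqref{eq:dee-comm}, that is, by applying $\partial_e$ and $\partial_f$ to $\psi_j$ via the twisted Leibniz rules \eqref{eq:bi-twisted}. The crucial simplifications are: the would-be terms in which a derivative leaves the spin-$1$ range drop out because $\kappa^1_2=\kappa^1_{-1}=0$; the identity $\partial_e\partial_f(b)=\partial_f\partial_e(b)$ on $\B$; and a systematic collapse of the powers of $q$ and the signs using the adjoint formula \eqref{eq:q-adjoints}. The outcome, which is the heart of the computation, is the clean decomposition
\[
[\D,\psi_j]=\omega_j^\dag\,\partial_e\partial_f(b)+(-1)^{1-j}\kappa^1_0\,t^1_{2-j,0}\,[\D,b]+\Xi_j,
\]
where $\Xi_j$ is the purely second-order remainder, with upper-right entry a multiple of $t^1_{2-j,-1}\partial_e^2(b)$ and lower-left entry a multiple of $t^1_{2-j,1}\partial_f^2(b)$. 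Recognising the mixed-derivative contribution as the single operator $\omega_j^\dag\partial_e\partial_f(b)$, and the first-order contribution as the left $\B$-multiple $(-1)^{1-j}\kappa^1_0t^1_{2-j,0}[\D,b]$ (using $t^1_{2-j,0}\in\B$ and $\kappa^1_0=\kappa^1_1$), is the step I expect to demand the most care.

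Finally I would tensor with $\omega_j$ and sum over $j$, treating the three summands in turn. The mixed-derivative summand gives $\sum_j\omega_j\ox\omega_j^\dag\,\partial_e\partial_f(b)=G\,\partial_e\partial_f(b)$ at once from \eqref{eq:Gee}. The first-order summand vanishes: moving the central factor $t^1_{2-j,0}\in\B$ across the balanced tensor product turns it into $\big(\sum_j(-1)^{1-j}\kappa^1_0\,\omega_j\,t^1_{2-j,0}\big)\ox[\D,b]$, and the orthogonality relations \eqref{eq:q-orthog}, summed over the first index with the unequal second indices $\{-1,0\}$ and $\{1,0\}$, annihilate the bracketed sum. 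The remaining summand $\sum_j\omega_j\ox\Xi_j$ produces the two junk terms: inserting the resolution $1=\sum_k(t^2_{k,\mp2})^*t^2_{k,\mp2}$ coming from the $l=2$ orthogonality relations rewrites the upper-right term $t^1_{2-j,-1}\partial_e^2(b)$ and the lower-left term $t^1_{2-j,1}\partial_f^2(b)$ in the stated form, with the central elements $t^2_{k,-2}\partial_e^2(b)$ and $t^2_{k,2}\partial_f^2(b)$ appearing as right coefficients. The only subtlety, and the main obstacle, is to show that the ``cross'' contributions, in which the lower-left block of $\omega_j$ meets the upper-right block of $\Xi_j$ and vice versa, vanish: applying the multiplication map to each such contribution produces the factor $\sum_i(t^1_{i,1})^*t^1_{i,-1}=0$ or $\sum_i(t^1_{i,-1})^*t^1_{i,1}=0$ by \eqref{eq:q-orthog}, and since each cross contribution lies in a single block on which multiplication is injective, it must already be zero in $T^2_\D(\B)$.
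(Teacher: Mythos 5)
Your proposal is correct and follows essentially the same route as the paper's proof: compute $\pairing{\omega_j}{[\D,b]}_\B$, split $[\D,\pairing{\omega_j}{[\D,b]}_\B]$ into a mixed-derivative piece $\omega_j^\dag\partial_e\partial_f(b)$, a first-order piece, and a second-order remainder, kill the first-order piece via $\sum_j(-1)^{1-j}\omega_jt^1_{2-j,0}=0$, and insert the spin-$2$ orthogonality relations to put the remainder in the stated form. The only addition is your explicit check that the degree-zero cross contributions vanish --- a point the paper subsumes under ``the orthogonality relations'' --- and your argument for that (apply $m$, use $\sum_i(t^1_{i,\pm1})^*t^1_{i,\mp1}=0$, and injectivity of $m$ on each off-diagonal block, itself provable by the same frame-insertion trick) is sound.
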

\begin{proof}
From the proof of Lemma \ref{lem:pods-frame} 
\[
\pairing{\omega_j}{[\D,b]}_\B
=q^{-2+j}\big(q^2t^{1*}_{-2+j,-1}\partial_f(b)+q^{-2}t^{1*}_{-2+j,1}\partial_e(b)\big),
\]
or after computing adjoints of matrix elements
\[
\pairing{\omega_j}{[\D,b]}_\B
=(-1)^{1-j}\big(qt^{1}_{2-j,1}\partial_f(b)+q^{-1}t^{1}_{2-j,-1}\partial_e(b)\big).
\]
Using Equation \eqref{eq:bi-twisted},
the relations $\partial_e(t^1_{m,1})=\partial_f(t^1_{m,-1})=0$, and the formulae for adjoints, we get
\[
\partial_e(\pairing{\omega_j}{[\D,b]}_\B)
=(-1)^{1-j}t^1_{2-j,1}\partial_e\partial_f(b)+(-1)^{1-j}\kappa^1_0t^1_{2-j,0}\partial_e(b)+(-1)^{1-j}t^1_{2-j,-1}\partial_e^2(b)
\]
and
\[
\partial_f(\pairing{\omega_j}{[\D,b]}_\B)
=(-1)^{1-j}t^1_{2-j,-1}\partial_f\partial_e(b)+(-1)^{1-j}\kappa^1_1t^1_{2-j,0}\partial_f(b)+(-1)^{1-j}t^1_{2-j,1}\partial_f^2(b).
\]
Thus
\begin{align*}
[\D,\pairing{\omega_j}{[\D,b]}_\B]&=\omega_j^*\partial_e\partial_f(b)+(-1)^{1-j}\kappa^1_1t^1_{2-j,0}[\D,b]
+\begin{pmatrix} \hspace{-30mm}0 & \hspace{-30mm}q^{-1/2}(-1)^{1-j}t^1_{2-j,-1}\partial_e^2(b)\\ q^{1/2}(-1)^{1-j}t^1_{2-j,1}\partial_f^2(b)&\hspace{-10mm}0\end{pmatrix}.
\end{align*}

Next observe that
\[
(-1)^{1-j}\omega_j=\begin{pmatrix} \hspace{-30mm}0 & \hspace{-20mm}q^{-1/2}(-1)^{1-j}q^{-2+j}t^1_{-2+j,1}\\ q^{1/2}(-1)^{1-j}q^{-2+j}t^1_{-2+j,-1} & \hspace{-20mm}0\end{pmatrix}=\begin{pmatrix} 0 & q^{1/2}t^{1*}_{2-j,-1}\\ q^{-1/2}t^{1*}_{2-j,1} & 0\end{pmatrix}
\]
and so
\[
\sum_j(-1)^{1-j}\omega_jt^1_{2-j,0}=0.
\]
As $\partial_e\partial_f(b)=\partial_f\partial_e(b)$ for $b\in\B$, we have  $\nablar^{G}([\D,b])$ is
\begin{align*}
\sum_j\omega_j\ox&[\D,\pairing{\omega_j}{[\D,b]}_\B]
=\sum_j\omega_j\ox \omega_j^*\partial_e\partial_f(b)
+\omega_j\ox \begin{pmatrix} \hspace{-30mm}0 & \hspace{-30mm}q^{-1/2}(-1)^{1-j}t^1_{2-j,-1}\partial_e^2(b)\\ q^{1/2}(-1)^{1-j}t^1_{2-j,1}\partial_f^2(b)&0\end{pmatrix}\\
&=G\partial_e\partial_f(b)
+\sum_{j,k}\begin{pmatrix} 0 & (t^1_{2-j,-1})^*\\ 0 & 0\end{pmatrix}\ox\begin{pmatrix} 0 & t^1_{2-j,-1}(t^2_{k,-2})^*\\0&0\end{pmatrix}t^2_{k,-2}\partial_e^2(b)\nonumber\\
&\qquad\qquad+\sum_{j,k}\begin{pmatrix} 0&0\\(t^1_{2-j,1})^*&0\end{pmatrix}\ox \begin{pmatrix} 0 & 0\\ t^1_{2-j,1}(t^2_{k,2})^*&0\end{pmatrix}t^2_{k,2}\partial_f^2(b).
\end{align*}
the last equality
following from the orthogonality relations, with $k=-2,-1,0,1,2$. 
\end{proof}
The two families
\[
\sum_j\begin{pmatrix} 0 & (t^1_{2-j,-1})^*\\ 0 & 0\end{pmatrix}\ox\begin{pmatrix} 0 & t^1_{2-j,-1}(t^2_{k,-2})^*\\0&0\end{pmatrix},
\qquad\sum_j\begin{pmatrix} 0&0\\(t^1_{2-j,1})^*&0\end{pmatrix}\ox \begin{pmatrix} 0 & 0\\ t^1_{2-j,1}(t^2_{k,2})^*&0\end{pmatrix}
\] 
are mutually orthogonal and are contained in $\ker(m:T^2_\D\to\Omega^2_\D(\B))$. Both families of two-tensors are orthogonal to the line element $G$ as well, and we will now use them to construct a frame for $JT^{2}_{\D}$.

We may identify $\End_\B(\mathcal{S})$ with a subset of $M_2(\A)$, where $\A\subset C(SU_{q}(2))$ is the coordinate algebra of $SU_q(2)$. This is done by identifying $\Omega^1_\D(\B)$ with off-diagonal matrices, with the $E_{12}$ component of degree 1 with respect to the circle action defining $\B$, and the $E_{21}$ component of degree -1. 
We can also identify $\Omega^1_\D\ox_\B\Omega^1_\D$
with sums of tensor products of off-diagonal matrices in $M_2(\A)\ox_\B M_2(\A)$,
but now the degrees can be $-2,0,2$. 
\begin{lemma}
\label{lem:kernel}
The $\B$-bimodules $X,Y\subset T^2_\D$ of degree $-2,2$ elements respectively have
 frames
\begin{align*}
Y_k&=q\sum_j\begin{pmatrix} 0 & (t^1_{2-j,-1})^*\\ 0 & 0\end{pmatrix}\ox\begin{pmatrix} 0 & t^1_{2-j,-1}(t^2_{k,-2})^*\\0&0\end{pmatrix}\\
X_k&=q^{-1}\sum_j\begin{pmatrix} 0&0\\(t^1_{2-j,1})^*&0\end{pmatrix}\ox \begin{pmatrix} 0 & 0\\ t^1_{2-j,1}(t^2_{k,2})^*&0\end{pmatrix}.
\end{align*}
For any two-tensor $\rho\ox\eta=(\begin{smallmatrix}0&\rho_{+}\\\rho_{-}&0\end{smallmatrix})\ox(\begin{smallmatrix}0&\eta_{+}\\\eta_{-}&0\end{smallmatrix})\in T^2_\D$ we have
\begin{align}
\pairing{Y_k}{\rho\ox\eta}_\B&=qt^2_{k,-2}\pairing{E_{12}\ox E_{12}}{\rho\ox\eta}_\B=q^{-1}t^2_{k,-2}\rho_{+}\eta_{+}
\quad\mbox{and}\quad\nonumber\\
\pairing{X_k}{\rho\ox\eta}_\B&=q^{-1}t^2_{k,2}\pairing{E_{21}\ox E_{21}}{\rho\ox\eta}_\B=qt^2_{k,2}\rho_{-}\eta_{-}
\label{eq:ex-why}
\end{align}
where $E_{ij}$ are standard matrix units, and on the right we take the inner product on $M_2(\A)\ox_\B M_2(\A)$ to be defined by Equation \eqref{eq:inner-prod}. Moreover $\pairing{Y_k}{X_l}_\B=0$ for each $k,l$. Hence the junk bimodule $JT^2_\D$ is the $\B$-span of $G$ and 
the $X_k,Y_k$.

\end{lemma}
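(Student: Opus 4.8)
The plan is to dispatch the four claims in order, using throughout the explicit one-form inner product \eqref{eq:inner-prod}, the induced inner product on the balanced tensor product $\pairing{\omega_1\ox\omega_2}{\eta_1\ox\eta_2}_\B=\pairing{\omega_2}{\pairing{\omega_1}{\eta_1}_\B\eta_2}_\B$, and the orthogonality relations \eqref{eq:q-orthog}. The organising principle is the circle grading of $T^2_\D$: the algebra $\B$ sits in degree $0$, the factor $\Omega^1_\D(\B)$ splits into its $E_{12}$ (degree $+1$) and $E_{21}$ (degree $-1$) parts, and so $T^2_\D$ decomposes into mutually orthogonal sectors of degrees $+2,0,-2$ preserved by all $\B$-bilinear operations. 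In particular $Y$ (degree $+2$, the $E_{12}\ox E_{12}$ sector) and $X$ (degree $-2$) are complemented sub-bimodules, and any $\B$-valued inner product between elements of different degree vanishes.

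I would first establish the inner product formula \eqref{eq:ex-why}. Writing a generic degree-$(+2)$ tensor as $\rho\ox\eta=\rho_+E_{12}\ox\eta_+E_{12}$ and noting that \eqref{eq:inner-prod} reads $\pairing{\rho}{\eta}_\B=q\rho_-^*\eta_-+q^{-1}\rho_+^*\eta_+$ on off-diagonal one-forms, the tensor inner product gives $\pairing{Y_k}{\rho\ox\eta}_\B=q\cdot q^{-2}\,t^2_{k,-2}\big(\sum_j(t^1_{2-j,-1})^*t^1_{2-j,-1}\big)\rho_+\eta_+$. The inner sum equals $1$ by \eqref{eq:q-orthog} (sum over the first index of $t^1$, with $2-j$ running through $-1,0,1$), yielding $q^{-1}t^2_{k,-2}\rho_+\eta_+$; comparing with the same computation for $E_{12}\ox E_{12}$ identifies the middle expression. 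The statement for $X_k$ is the mirror image under $E_{12}\leftrightarrow E_{21}$, $\partial_e\leftrightarrow\partial_f$ and the index changes $-1\mapsto1$, $-2\mapsto2$.

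The frame property then follows by a second use of \eqref{eq:q-orthog}. Substituting the formula just proved into $\sum_kY_k\pairing{Y_k}{\rho\ox\eta}_\B$, the label $k$ is collapsed by $\sum_k(t^2_{k,-2})^*t^2_{k,-2}=1$; moving the degree-$0$ element $t^1_{2-j,-1}\rho_+$ across the balanced tensor and collapsing $j$ by $\sum_j(t^1_{2-j,-1})^*t^1_{2-j,-1}=1$ returns $\rho_+E_{12}\ox\eta_+E_{12}$, and linearity extends this to all of $Y$; likewise for $X$. The orthogonality $\pairing{Y_k}{X_l}_\B=0$ is immediate from the grading (or directly: the partial inner product of an $E_{12}$- with an $E_{21}$-one-form vanishes in \eqref{eq:inner-prod}). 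Finally, rewriting the two families in \eqref{eq:junk-decomp} as $q^{-1}\sum_kY_k\,t^2_{k,-2}\partial_e^2(b)$ and $q\sum_kX_k\,t^2_{k,2}\partial_f^2(b)$ presents $\nablar^G(\dee b)$ as a sum of a degree-$0$ term $G\,\partial_e\partial_f(b)$ and these degree-$(\pm2)$ terms; by Theorem \ref{thm:existence} we conclude that $JT^2_\D=\B\cdot\nablar^G(\dee\B)\cdot\B$ is contained in the $\B$-span of $G,X_k,Y_k$.

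The main obstacle is the reverse inclusion. Since $JT^2_\D$ is a $\B$-bimodule it is graded, so each homogeneous component of $\nablar^G(\dee b)$ lies in $JT^2_\D$ separately, and the three sectors can be treated independently. For the degree-$0$ sector I would use that $\partial_e\partial_f$ acts on the spanning set $t^l_{i0}$ by the scalar $(\kappa^l_0)^2$, which is nonzero exactly for $l\geq1$, so its image is the augmentation ideal and the degree-$0$ junk is precisely $G$ times that ideal --- this is the point at which the phrase ``$\B$-span of $G$'' must be read with care. For the degree-$(\pm2)$ sectors the task is to show that varying $b$ and using the bimodule structure recovers each individual $Y_k$ and $X_k$ rather than only a proper sub-bimodule; here I expect to pair the generators $\sum_kY_k\,t^2_{k,-2}\partial_e^2(b)$ against the frame $(Y_k)$ and collapse the $t^2$-labels via \eqref{eq:q-orthog}, while keeping track of the weights on which $\partial_e^2$ and $\partial_f^2$ vanish.
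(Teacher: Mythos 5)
Your treatment of the frames, the inner product formula \eqref{eq:ex-why}, and the orthogonality $\pairing{Y_k}{X_l}_\B=0$ is correct and is essentially the paper's argument: decompose $T^2_\D$ by the circle grading into degrees $-2,0,2$, compute the partial inner products with \eqref{eq:inner-prod}, move the degree-zero element $t^1_{2-j,-1}\rho_+$ across the balanced tensor product, and collapse the $j$- and $k$-sums with the orthogonality relations \eqref{eq:q-orthog}. The paper writes out only the $X_k$/degree-$(-2)$ case and declares the rest straightforward, so on these points there is nothing to add.

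Where you diverge is in the final claim, which you rightly notice the paper does not argue in detail: the reverse inclusion $\mathrm{span}_\B\{G,X_k,Y_k\}\subset JT^2_\D$. Your worry about the degree-zero sector is, however, a false alarm, and the statement you make there is not correct as written. By Theorem \ref{thm:existence}, $JT^2_\D=\B\cdot\nablar^G(\dee\B)\cdot\B$ is a two-sided $\B$-module, so its degree-zero part is $G$ times the \emph{two-sided ideal} generated by ${\rm im}(\partial_e\partial_f)={\rm span}\{t^l_{i0}:l\geq1\}$, not $G$ times that linear span. That ideal is all of $\B$, since $\sum_p(t^1_{p0})^*t^1_{p0}=1$ by \eqref{eq:q-orthog}; hence the degree-zero junk is exactly $G\B$ and the phrase ``$\B$-span of $G$'' needs no caveat. (Note also that ${\rm span}\{t^l_{i0}:l\geq1\}$ is the kernel of the Haar state, not an ideal, so calling it the augmentation ideal compounds the confusion.) For the degree-$(\pm2)$ sectors your sketch does go through, and is worth completing: ${\rm im}(\partial_e^2)={\rm span}\{t^l_{i2}:l\geq2\}$ is the \emph{entire} degree-$(+2)$ part of $\A$ because $\kappa^l_1\kappa^l_2\neq0$ for every $l\geq2$, and then for arbitrary $b_k\in\B$ one has $\sum_kY_kb_k=\sum_kY_kt^2_{k,-2}e$ with $e=\sum_k(t^2_{k,-2})^*b_k$ of degree $2$, which exhibits every element of $Y$ as lying in $\B\cdot\nablar^G(\dee\B)\cdot\B$; similarly for $X$. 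With those two corrections your argument closes the gap the paper leaves implicit.
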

\begin{proof}
The vectors $X_k,Y_k$ are in $T^2_\D$ by Equation \eqref{eq:junk-decomp}, and the inner product calculations \eqref{eq:ex-why} are straightforward, as is $\pairing{Y_k}{X_l}=0$. 
The Clebsch-Gordan relations tell us that $T^2_\D$
breaks up as homogenous components of degrees $-2,0,2$. Let
$\rho\ox\eta=\rho_{-}E_{21}\ox\eta_{-}E_{21}$ be an element of degree -2. Then since $ t^1_{2-j,1}\rho_{-}\in\B$ we have
\begin{align*}
\sum_{k=-2}^2X_k&\pairing{X_k}{\rho\ox\eta}_\B\\
&=q^{-2}\sum_{k=-2}^2\sum_{j=1}^3\begin{pmatrix} 0&0\\(t^1_{2-j,1})^*&0\end{pmatrix}\ox \begin{pmatrix} 0 & 0\\ t^1_{2-j,1}(t^2_{k,2})^*&0\end{pmatrix}t^2_{k,2}\pairing{E_{21}\ox E_{21}}{\rho\ox\eta}_\A\\
&=\sum_{k=-2}^2\sum_{j=1}^3\begin{pmatrix} 0&0\\(t^1_{2-j,1})^*&0\end{pmatrix}\ox \begin{pmatrix} 0 & 0\\ t^1_{2-j,1}(t^2_{k,2})^*&0\end{pmatrix}t^2_{k,2}\rho_{-}\eta_{-}\\
&=\sum_{j=1}^3\begin{pmatrix} 0&0\\(t^1_{2-j,1})^*&0\end{pmatrix}\ox \begin{pmatrix} 0 & 0\\ t^1_{2-j,1}\rho_{-}\eta_{-}&0\end{pmatrix}\\
&=\sum_{j=1}^3\begin{pmatrix} 0&0\\(t^1_{2-j,1})^*t^1_{2-j,1}\rho_{-}&0\end{pmatrix}\ox \begin{pmatrix} 0 & 0\\ \eta_{-}&0\end{pmatrix}
=\begin{pmatrix} 0&0\\ \rho_{-}&0\end{pmatrix}\ox \begin{pmatrix} 0 & 0\\ \eta_{-}&0\end{pmatrix}.
\end{align*}
A similar calculation proves the result for $Y_k$.
\end{proof}

The next Proposition identifies the orthogonal complement 
of $JT^2_\D$.

\begin{prop}
\label{prop:all-two-forms}
The two-tensors $T^2_\D$ 
can be decomposed as an orthogonal direct sum
\[
T^2_\D=X\oplus Y\oplus{\rm span}(G)\oplus{\rm span}(C)
\]
where the two-tensor $C$ satisfies that $\pairing{C}{C}_\B=\alpha$ is a scalar. The differential $\d$ is defined using the projection $1-\Psi=\frac{1}{\alpha}\ketbra{C}{C}$, and is given by
\[
\d(a[\D,b])=\frac{q}{2}C\big(q^{-1}\partial_e(a)\partial_f(b)-q\partial_f(a)\partial_e(b)\big),\qquad a,b\in\B.
\]
In particular, $(\Omega^1_\D(\B),\dag,\pairing{\cdot}{\cdot},\Psi)$ is an Hermitian differential structure.
\end{prop}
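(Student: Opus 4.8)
The plan is to reduce to the degree-zero part of $T^2_\D$ using the circle grading, and there to produce an explicit orthonormal frame against which $G$ and $C$ are a simple change of basis.

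First I would split $T^2_\D$ into its homogeneous components for the $\mathbb T$-action. Writing $\Omega^1_\D=\Omega^+\oplus\Omega^-$ for the decomposition into the $E_{12}$-part (degree $+1$) and the $E_{21}$-part (degree $-1$), one has $T^2_\D=\Omega^+\ox_\B\Omega^+\,\oplus\,(\Omega^+\ox_\B\Omega^-\oplus\Omega^-\ox_\B\Omega^+)\,\oplus\,\Omega^-\ox_\B\Omega^-$ of degrees $+2,0,-2$. Since $\pairing{E_{12}u}{E_{21}w}_\B=\Phi\big((E_{12}u)^*E_{21}w\big)=0$ (the matrix product vanishes), an $E_{12}$-form and an $E_{21}$-form are orthogonal, so these three components are mutually orthogonal; the degree $+2$ and $-2$ pieces are exactly the submodules $Y$ and $X$ of Lemma \ref{lem:kernel}, which lie in $\ker m\subset JT^2_\D$. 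This isolates the problem to the degree-zero part $T^0:=\Omega^+\ox_\B\Omega^-\oplus\Omega^-\ox_\B\Omega^+$.

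Next I would exhibit a frame for $T^0$. The two summands of $G$ in Corollary \ref{cor:Wat-index}, namely $P:=\sum_j E_{12}(t^1_{2-j,-1})^*\ox E_{21}t^1_{2-j,-1}$ and $Q:=\sum_j E_{21}(t^1_{2-j,1})^*\ox E_{12}t^1_{2-j,1}$, generate $\Omega^+\ox_\B\Omega^-$ and $\Omega^-\ox_\B\Omega^+$ respectively; using the orthogonality relations \eqref{eq:q-orthog} exactly as in Lemma \ref{lem:pods-frame} one checks $P\,\pairing{P}{\xi}_\B=\xi$ for $\xi\in\Omega^+\ox_\B\Omega^-$ (and similarly for $Q$), so $\{P,Q\}$ is a frame for $T^0$ with $\pairing{P}{P}_\B=\pairing{Q}{Q}_\B=1$ and $\pairing{P}{Q}_\B=0$. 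Then $G=qP+q^{-1}Q$, and I would define $C:=q^{-1}P-qQ$, which satisfies $\pairing{G}{C}_\B=0$ and $\pairing{C}{C}_\B=q^2+q^{-2}=:\alpha$, a scalar, with $C^\dag=C$. This produces the orthogonal decomposition $T^2_\D=X\oplus Y\oplus\mathrm{span}(G)\oplus\mathrm{span}(C)$; since $JT^2_\D=X\oplus Y\oplus\mathrm{span}(G)$ by Lemma \ref{lem:kernel}, the orthogonal complement of the junk is precisely $\mathrm{span}(C)$.

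Finally I would set $1-\Psi:=\tfrac1\alpha\ketbra{C}{C}$. Because $\alpha$ is a central scalar this is an orthogonal projection, so $\Psi=\Psi^2=\Psi^*$ automatically, and $\Psi\circ\dag=\dag\circ\Psi$ follows from $C^\dag=C$. Since $\mathrm{Im}\,\Psi=JT^2_\D$, both inclusions $JT^2_\D\subset\mathrm{Im}\,\Psi\subset m^{-1}(J^2_\D(\B))$ hold (the latter because $\pi_\D(\omega)=0$ forces $m\circ\pi_\D(\omega)=0$, whence $JT^2_\D\subset m^{-1}(J^2_\D(\B))$ always). For the explicit differential I would use \eqref{eq: second-order-diff}: since $\pi_\D^{-1}(a[\D,b])=a\,\delta b$ and $\delta(a\,\delta b)=\delta a\,\delta b$, one has $\d(a[\D,b])=(1-\Psi)([\D,a]\ox[\D,b])=\tfrac1\alpha C\,\pairing{C}{[\D,a]\ox[\D,b]}_\B$; expanding $[\D,a]\ox[\D,b]$ into its four matrix-unit components and pairing the degree-zero part against $C=q^{-1}P-qQ$ gives $\pairing{C}{[\D,a]\ox[\D,b]}_\B=q^{-1}\partial_e(a)\partial_f(b)-q\partial_f(a)\partial_e(b)$, yielding the stated antisymmetric formula (up to the scalar normalisation of $C$ fixed in the paper, which converts the prefactor $\tfrac1\alpha$ into $\tfrac q2$). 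The step I expect to be the main obstacle is establishing that $\{P,Q\}$ is genuinely a frame for $T^0$, i.e.\ that $T^0$ is free of rank two with these generators: this is where the $SU_q(2)$ corepresentation theory and the balancing relations for $\ox_\B$ must be used carefully, since moving $\B$-valued coefficients across the tensor product is exactly what makes the reconstruction identity $P\,\pairing{P}{\xi}_\B=\xi$ work. Once the frame and the scalars $\pairing{P}{P}_\B=\pairing{Q}{Q}_\B=1$ are in hand, the orthogonal decomposition, the value of $\alpha$, the projection axioms and the formula for $\d$ are all direct computations with the explicit frame.
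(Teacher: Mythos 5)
Your proposal is correct, but it reaches $C$ by a genuinely different route from the paper. The paper follows Wagner and constructs the complement of the junk from the twisted Chern character: it computes $\pi_\D(\Ch_2(P_+))$ explicitly, subtracts its $G$-component to get $C$, then verifies $\pairing{C}{C}_\B$ is a scalar and that $C\perp X_k,Y_k$. You instead use the $\mathbb{T}$-grading to reduce to the degree-zero part of $T^2_\D$ and exhibit the orthonormal pair $P,Q$ there, so that $G=qP+q^{-1}Q$ and $C$ is forced up to scale as the orthogonal combination $q^{-1}P-qQ$; I have checked that the reconstruction identity $P\pairing{P}{\xi}_\B=\xi$ does hold (the coefficient $t^1_{2-j,-1}u\in\B$ can be moved across $\ox_\B$ and the orthogonality relations \eqref{eq:q-orthog} close the sum), that $\pairing{P}{P}_\B=\pairing{Q}{Q}_\B=1$, and that your $C$ is the scalar multiple $\tfrac{q^2+q^{-2}}{2q^{-1}}$ of the paper's $C$ from \eqref{eq:volume-form}, so the projection $\tfrac1\alpha\ketbra{C}{C}$ and the resulting formula for $\d$ agree with the paper's (your $\alpha=q^2+q^{-2}$ versus the paper's $\alpha=4q^{-2}/(q^2+q^{-2})$ is absorbed by the rescaling, exactly as you say). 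Your approach is more elementary and makes the rank-$(2{+}10{+}1{+}1)$ structure of $T^2_\D$ transparent, at the cost of losing the paper's conceptual identification of $C$ with the (twisted) fundamental class of $S^2_q$. Two small points of care: the parenthetical inclusion $\ker m\subset JT^2_\D$ is not a general fact and does not follow from the definition of junk --- what you actually need is $X\oplus Y\subset JT^2_\D$ and $JT^2_\D=X\oplus Y\oplus{\rm span}(G)$, both of which are supplied by Lemma \ref{lem:kernel} via the Grassmann computation \eqref{eq:junk-decomp} and Theorem \ref{thm:existence}, so you should lean on that citation rather than on $\ker m$; and the property $\Psi\circ\dag=\dag\circ\Psi$ needs $C^\dag=C$ together with the compatibility of $\dag$ with the two inner products, not $C^\dag=C$ alone, though this is at the same level of detail as the paper's own treatment.
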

\begin{proof}
To find a non-trivial non-junk two-form, we follow Wagner \cite{W} and construct a non-trivial two-form from the twisted (by the modular group action) Chern character $\Ch_{2}(P_{+})$ of $P_+$.
According to \cite[Section 4.3]{S},
\begin{align*}
&\Ch_{2}(P_{+}) = -2 \sum_{k_{0}, k_{1}, k_{2}=0}^{1} q^{-2k_{0}}\\
& \left( t^{1/2}_{1/2-k_{0},1/2}t^{1/2\ast}_{1/2-k_{1},1/2}- \tfrac{1}{2} \delta_{k_{0}, k_{1}} \right) 
\ox t^{1/2}_{1/2-k_{1},1/2}t^{1/2\ast}_{1/2-k_{2},1/2} \ox t^{1/2}_{1/2-k_{2},1/2}t^{1/2\ast}_{1/2-k_{0},1/2}.
\end{align*}
Using the definitions we can compute
\begin{equation}
\partial_b( t^{1/2}_{1/2-k,1/2}t^{1/2\ast}_{1/2-h,1/2})=\left\{\begin{array}{ll} -q^{1/2} t^{1/2}_{1/2-k,1/2}t^{1/2\ast}_{1/2-h,-1/2} & b=e\\
 q^{-1/2}t^{1/2}_{1/2-k,-1/2}t^{1/2\ast}_{1/2-h,1/2} & b=f\end{array}\right..
\label{eq:half-derivs}
\end{equation}
A computation using the orthogonality relations \eqref{eq:q-orthog} and Equation \eqref{eq:dee-comm} shows that
\begin{align}
&\pi_\D(\Ch_2(P_+))=\sum_{k_0,k_2=0}^1q^{-2k_0}\begin{pmatrix} \hspace{-30mm}0 &\hspace{-30mm} t^{1/2}_{1/2-k_{0},1/2}t^{1/2\ast}_{1/2-k_{2},-1/2}\\t^{1/2}_{1/2-k_{0},-1/2}t^{1/2\ast}_{1/2-k_{2},1/2}&0\end{pmatrix}\ox
\begin{pmatrix} \hspace{-30mm}0 & \hspace{-30mm}-t^{1/2}_{1/2-k_{2},1/2}t^{1/2\ast}_{1/2-k_{0},-1/2}\\t^{1/2}_{1/2-k_{2},-1/2}t^{1/2\ast}_{1/2-k_{0},1/2}&0\end{pmatrix}.
\label{eq:chern}
\end{align}
Since, for example, $t^{1/2}_{1/2-k_{2},-1/2}t^{1/2\ast}_{1/2-k_{0},1/2}t^{1*}_{2-j,-1}\in\B$, we can use the orthogonality \eqref{eq:q-orthog} and adjoint \eqref{eq:q-adjoints} relations to see that
\begin{equation}
\pi_\D(\Ch_2(P_+))=\begin{pmatrix} 0 & t^{1*}_{2-j,-1}\\0&0\end{pmatrix}\ox\begin{pmatrix} 0 & 0\\ t^{1}_{2-j,-1}&0\end{pmatrix}
-q^{-2}\begin{pmatrix} 0 & 0\\ t^{1*}_{2-j,1}&0\end{pmatrix}\ox\begin{pmatrix} 0 & t^{1}_{2-j,1}\\0&0\end{pmatrix}.
\label{eq:chern-simple}
\end{equation}

To obtain a non-trivial two-form orthogonal to $Z$, we define
\begin{align*}
C&:=\pi_\D(\Ch_2(P_+))-Z\pairing{Z}{\pi_\D(\Ch_2(P_+))}_\B
=\pi_\D(\Ch_2(P_+))-\frac{1}{q^2+q^{-2}}\omega_l\ox\omega_l^\dag\pairing{\rho_{(1)}^\dag}{\rho_{(2)}}_\B,
\end{align*}
where we have written $\pi_\D(\Ch_2(P_+))=\sum\rho_{(1)}\ox\rho_{(2)}$ in Sweedler notation. Since we have
\[
\pairing{C}{C}_\B=\pairing{\rho_{(2)}}{\pairing{\rho_{(1)}}{\rho_{(1)}}_\B\rho_{(2)}}_\B-\frac{1}{q^2+q^{-2}}\pairing{\rho_{(2)}}{\rho_{(1)}^\dag}_\B\pairing{\rho_{(1)}^\dag}{\rho_{(2)}}_\B
\]
we need to compute some inner products. 
Using \eqref{eq:chern-simple} the first is given by
\begin{align*}
\pairing{\rho_{(1)}^\dag}{\rho_{(2)}}_\B
&=\Tr\left(\begin{pmatrix}q&0\\0&q^{-1}\end{pmatrix}\rho_{(1)}\rho_{(2)}\right)
=q^{-1}(q^2-q^{-2}),
\end{align*}
which yields
\begin{align}
C&=\pi_\D(\Ch_2(P_+))+q^{-1}\frac{q^{-2}-q^2}{q^{-2}+q^2}\omega_l\ox\omega_l^\dag\nonumber\\
&=\frac{2q^{-1}}{q^{-2}+q^2}\Big(q^{-1}\begin{pmatrix} 0 & t^{1*}_{2-j,-1}\\0&0\end{pmatrix}\ox\begin{pmatrix} 0 & 0\\ t^{1}_{2-j,-1}&0\end{pmatrix}-q\begin{pmatrix} 0 & 0\\ t^{1*}_{2-j,1}&0\end{pmatrix}\ox\begin{pmatrix} 0 & t^{1}_{2-j,1}\\0&0\end{pmatrix}\Big).\label{eq:volume-form}
\end{align}
The remaining inner products are computed similarly, and
\begin{equation}
\alpha=\pairing{C}{C}_\B=q^{-2}(q^2+q^{-2})-\frac{1}{q^2+q^{-2}}q^{-2}(q^2-q^{-2})^2=\frac{4q^{-2}}{q^2+q^{-2}}
\label{eq:alpha}
\end{equation}
is a scalar. Hence
\[
\frac{1}{\alpha}\ketbra{C}{C}
\]
is a rank one projection, and of course $C$ is orthogonal to $Z$. Note that $\alpha\to 2$ as $q\to 1$.

To complete the proof it suffices to show that $C$ is orthogonal to the kernel of the multiplication map. With $X_k,Y_k$ as in Lemma \ref{lem:kernel} we have
\[
\pairing{Y_k}{C}_\A=\pairing{X_k}{C}_\A=0.
\]

The formula for $\d$ follows from the definition.
\end{proof}
Since the inner product extends to the whole Clifford algebra $\CDB$, we can apply the orthogonality relations and perform the sums to see that inner products with $C$ are the same, in the sense of Lemma \ref{lem:kernel}, as inner products with 
\begin{equation}
\tilde{C}=\frac{2q^{-1}}{q^2+q^{-2}}\big(q^{-1}E_{12}\ox E_{21}-qE_{21}\ox E_{12}\big).
\label{eq:cee}
\end{equation}
Likewise inner products with $G$ are the same as inner products with
\begin{equation}
\tilde{G}=qE_{12}\ox E_{21}+q^{-1}E_{21}\ox E_{12}
\label{eq:gee}
\end{equation}
Neither $\tilde{C}$ nor $\tilde{G}$ is in $T^2_\D$, though the shorthand is useful for computations of inner products with $C$ and $G$.

Combining Theorem \ref{thm:existence} and Proposition \ref{prop:all-two-forms} proves

\begin{thm}
The right Grassmann connection $\nablar^{G}$ of the frame from Lemma \ref{lem:pods-frame} is Hermitian and torsion-free.
\end{thm}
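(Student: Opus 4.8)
The plan is to deduce the statement directly from Theorem \ref{thm:existence}, which guarantees that the Grassmann connection of an exact frame is automatically Hermitian and torsion-free, provided one works inside an Hermitian differential structure. Thus the entire proof reduces to checking two hypotheses for the Podle\'s data: that the frame of Lemma \ref{lem:pods-frame} is exact, and that $(\Omega^1_\D(\B),\dag,\Psi,\pairing{\cdot}{\cdot})$ is an Hermitian differential structure. Both are essentially already in hand from the preceding results, so the final argument is simply their assembly.

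First I would note that the frame is exact. Each generator reads $\omega_j=q^{-2+j}(\kappa^1_1)^{-1}[\D,t^1_{-2+j,0}]$, and because the prefactor $q^{-2+j}(\kappa^1_1)^{-1}$ is a central scalar it may be absorbed inside the commutator, giving $\omega_j=[\D,b_j]=\dee(b_j)$ with $b_j:=q^{-2+j}(\kappa^1_1)^{-1}t^1_{-2+j,0}\in\B$. Hence $v=(\omega_j)$ is an exact frame in the sense of the definition preceding Theorem \ref{thm:existence}.

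Next I would verify the conditions of Definition \ref{ass:fgp-metric-junk}. The module $\Omega^1_\D(\B)$ is finitely generated projective with the right inner product \eqref{eq:inner-prod}, and Proposition \ref{prop:all-two-forms} supplies the idempotent through $1-\Psi=\frac{1}{\alpha}\ketbra{C}{C}$. Since $\alpha=\pairing{C}{C}_\B$ is a scalar, this is a genuine self-adjoint rank-one projection, so $\Psi=\Psi^2=\Psi^*$ and $\Psi\circ\dag=\dag\circ\Psi$. The sandwiching $JT^2_\D(\B)\subset{\rm Im}(\Psi)\subset m^{-1}(J^2_\D(\B))$ is precisely the orthogonal decomposition $T^2_\D=X\oplus Y\oplus{\rm span}(G)\oplus{\rm span}(C)$ of Proposition \ref{prop:all-two-forms} combined with Lemma \ref{lem:kernel}: the junk $JT^2_\D$ is the $\B$-span of $G$ and of the families $X_k,Y_k$, all orthogonal to $C$, so ${\rm Im}(\Psi)=X\oplus Y\oplus{\rm span}(G)=JT^2_\D$ while ${\rm Im}(1-\Psi)={\rm span}(C)=\Lambda^2_\D(\B)$. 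This confirms the Hermitian differential structure, and Theorem \ref{thm:existence} then yields that $\nablar^G$ is Hermitian and torsion-free.

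The genuinely substantial step is not the final invocation but the construction of the non-junk complement $C$ on which $\Psi$ rests, and I expect this to be the main obstacle. Because $\Omega^1_\D(\B)$ has trivial center, there is no formal reason for the junk bimodule to be complemented; exhibiting $C$ explicitly, here via the twisted Chern character $\Ch_2(P_+)$ orthogonalised against $G$, and checking that $\alpha=\pairing{C}{C}_\B$ is a scalar, is exactly what makes $1-\Psi$ a bona fide projection. This is the work discharged in Proposition \ref{prop:all-two-forms}; once it is granted, the remaining verification above is routine.
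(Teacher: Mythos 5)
Your proposal is correct and follows exactly the paper's route: the paper proves this theorem in one line by combining Theorem \ref{thm:existence} with Proposition \ref{prop:all-two-forms}, which is precisely the assembly you carry out (exactness of the frame plus the Hermitian differential structure furnished by the complement $C$). You have merely made explicit the verifications the paper leaves implicit, and your assessment that the real work lives in Proposition \ref{prop:all-two-forms} matches the paper's structure.
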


To prove uniqueness, we need to prove concordance and $\dag$-concordance, as well as have a suitable bimodule connection. We will leave most of the uniqueness proof to the Appendix, but the braiding which yields a $\dag$-bimodule connection is presented next.

\subsection{Braiding and bimodule connection}

Next we show that the left and right  Grassmann connections are a $\dag$-bimodule connection.

\begin{defn}
\label{defn:ciggy-butt-brain}
Define $\sigma:T^2_\D\to T^2_\D$ by
\begin{align*}
\sigma(Y_k)=q^{-2}Y_k,
\quad \sigma(X_k)&=q^{2}X_k,\\
\sigma(\begin{pmatrix} 0&(t^1_{2-j,-1})^*\\0&0\end{pmatrix}\ox\begin{pmatrix} 0&0\\t^1_{2-j,-1}&0\end{pmatrix})&=q^{-2}\begin{pmatrix} 0&0\\(t^1_{2-j,1})^*&0\end{pmatrix}\ox\begin{pmatrix} 0&t^1_{2-j,1}\\0&0\end{pmatrix},\\
\sigma(\begin{pmatrix} 0&0\\(t^1_{2-j,1})^*&0\end{pmatrix}\ox\begin{pmatrix} 0&t^1_{2-j,1}\\0&0\end{pmatrix})&=q^{2}\begin{pmatrix} 0&(t^1_{2-j,-1})^*\\0&0\end{pmatrix}\ox\begin{pmatrix} 0&0\\t^1_{2-j,-1}&0\end{pmatrix},
\end{align*}
and extend as a bimodule map. Observe that $\sigma^2\neq 1$ on $X\oplus Y$.
\end{defn}
\begin{rmk}
In \cite[Lemma 3.6]{KLS} a braiding on tensor powers of holomorphic line bundles/modules on the Podle\'{s} sphere is introduced. It would be of interest to see if restricting their braiding to these particular modules coincides with the braiding $\sigma$ of Definition \ref{defn:ciggy-butt-brain}.
\end{rmk}

Using Equation \eqref{eq:junk-decomp} we determine that
\begin{align}
\nablar^{G}&([\D,b]a)=G\partial_e\partial_f(b)a+q^{-1}Y_kt^2_{2-k,-2}\partial_e^2(b)a+qX_kt^2_{2-k,2}\partial_f^2(b)a+[\D,b]\ox[\D,a]\nonumber\\
&=G\partial_e\partial_f(b)a+q^{-1}Y_kt^2_{2-k,-2}\partial_e^2(b)a+qX_kt^2_{2-k,2}\partial_f^2(b)a\\
&\qquad+q^{-2}Y_kt^2_{2-k,-2}\partial_e(b)\partial_e(a)+q^2X_kt^2_{2-k,2}\partial_f(b)\partial_f(a)\nonumber\\
&\quad+\begin{pmatrix} 0&(t^1_{2-j,-1})^*\\0&0\end{pmatrix}\ox\begin{pmatrix} 0&0\\t^1_{2-j,-1}&0\end{pmatrix}\partial_e(b)\partial_f(a)
+\begin{pmatrix} 0&0\\(t^1_{2-j,1})^*&0\end{pmatrix}\ox\begin{pmatrix} 0&t^1_{2-j,1}\\0&0\end{pmatrix}\partial_f(b)\partial_e(a)
\label{eq:right}
\end{align}
using the frame $\{X_k,Y_k,Z\}$ described in the last section. 
\begin{prop}
\label{prop:left-conn}
The conjugate left connection $\nablal^{G}:=-\dag\circ\nablar^{G}\circ\dag$ is given by
\begin{align}
\nablal^{G}(&[\D,b]a)=[\D,{}_\B\pairing{[\D,b]a}{\omega_j^*}]\ox\omega_j^*\nonumber\\
&=G\partial_f\partial_e(b)a+q^3X_kt^2_{2-k,2}\partial_f^2(b)a+q^{-3}Y_kt^2_{2-k,-2}\partial_e^2(b)a+q^4X_kt^2_{2-k,2}\partial_f(b)\partial_f(a)\nonumber\\
&
+q^{-4}Y_kt^2_{2-k,-2}\partial_e(b)\partial_e(a)
+q^2\begin{pmatrix} 0&(t^1_{2-j,-1})^*\\0&0\end{pmatrix}\ox\begin{pmatrix} 0&0\\t^1_{2-j,-1}&0\end{pmatrix}\partial_f(b)\partial_e(a)\nonumber\\
&+q^{-2}\begin{pmatrix} 0&0\\(t^1_{2-j,1})^*&0\end{pmatrix}\ox\begin{pmatrix} 0&t^1_{2-j,1}\\0&0\end{pmatrix}\partial_e(b)\partial_f(a).
\label{eq:left}
\end{align}
\end{prop}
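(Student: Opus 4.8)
The plan is to read off $\nablal^{G}$ from the definition of a conjugate connection, $\nablal^{G}=-\dag\circ\nablar^{G}\circ\dag$, substituting the explicit formula \eqref{eq:right} for the right Grassmann connection. The first displayed equality in the statement is a general fact rather than something special to the Podle\'s sphere: by the identity for conjugate Grassmann connections recalled in Section~\ref{sec:ass1} (see also \cite{MRLC}), the conjugate of the right Grassmann connection of a frame $(\omega_j)$ is the left Grassmann connection of the dual left frame $(\omega_j^{\dag})=(\omega_j^{*})$, namely $\nablal^{G}(x)=[\D,{}_\B\pairing{x}{\omega_j^{*}}]\ox\omega_j^{*}$. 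It therefore remains to identify this with the explicit two-tensor on the right-hand side of \eqref{eq:left}.

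Three ingredients drive the computation. First, the action of $\dag$ on the generating two-tensors: a short calculation with \eqref{eq:q-adjoints} and the orthogonality relations \eqref{eq:q-orthog} gives $G^{\dag}=G$, while $\dag$ interchanges the families $X_k$ and $Y_k$ and interchanges the two mixed two-tensors in the last line of \eqref{eq:right}, each up to an explicit power of $q$ (with a relabelling of the index $k$); after re-expansion with \eqref{eq:q-orthog} these $\dag$-images again lie in the span of the frame $\{X_k,Y_k,G,\dots\}$. Second, the interaction of $\dag$ with the coefficient functions, governed by the derivative-adjoint relations $(\partial_e b)^{*}=-q\,\partial_f(b^{*})$ and $(\partial_f b)^{*}=-q^{-1}\partial_e(b^{*})$, which follow from \eqref{eq:q-adjoints} and the definitions of $\partial_e,\partial_f$; these are what turn a conjugated coefficient such as $\partial_e^{2}(b^{*})$ into $\partial_f^{2}(b)$ and so account for the swap $\partial_e\otto\partial_f$ between \eqref{eq:right} and \eqref{eq:left}. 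Third, the identity $\partial_e\partial_f(b)=\partial_f\partial_e(b)$ on $\B$, which makes the $G$-term self-conjugate.

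Assembling these, I would first write $([\D,b]a)^{\dag}=a^{*}[\D,b]^{*}$ in the exact-form shape needed to apply \eqref{eq:right}, using $a^{*}[\D,b^{*}]=[\D,a^{*}b^{*}]-[\D,a^{*}]b^{*}$, apply \eqref{eq:junk-decomp}--\eqref{eq:right} termwise, and then apply $-\dag$ to the output. The derivative-adjoint relations return every $b^{*},a^{*}$ to $b,a$ with the appropriate $\partial_e\otto\partial_f$ swap, while the $\dag$-images of $X_k$, $Y_k$ and of the mixed tensors, re-expressed through \eqref{eq:q-orthog}, reproduce the two-tensor frame displayed in \eqref{eq:left}.

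The main obstacle is the bookkeeping of signs and $q$-powers: one must combine the order reversal and conjugation from $\dag$, the factors $-q^{\pm1}$ supplied by the derivative-adjoint relations, and the $q$-power relabellings produced when the $\dag$-images of $X_k,Y_k$ are pushed back into the frame, and verify that the net exponents match \eqref{eq:left}---for instance the passage from the coefficients $q^{-1}$ and $q$ on the $Y$- and $X$-terms of \eqref{eq:right} to $q^{-3}$ and $q^{3}$ in \eqref{eq:left}. A convenient consistency check is the $\dag$-invariance of the $G$-coefficient, which holds precisely because $G^{\dag}=G$ and $\partial_e\partial_f=\partial_f\partial_e$ on $\B$.
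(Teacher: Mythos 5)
Your overall route---computing $\nablal^{G}=-\dag\circ\nablar^{G}\circ\dag$ by conjugating the explicit formula \eqref{eq:right} termwise---is a legitimate alternative to the paper's proof, which instead evaluates the left Grassmann connection $[\D,{}_\B\pairing{\,\cdot\,}{\omega_j^\dag}]\ox\omega_j^\dag$ directly on exact forms $[\D,b]$ and then extends to $[\D,b]a$ via the left Leibniz rule, expanding $[\D,b]\ox[\D,a]$ in the frame for $T^2_\D$. Your reduction $([\D,b]a)^\dag=-[\D,a^*b^*]+[\D,a^*]b^*$ and the derivative-adjoint relations $\partial_e(b)^*=-q\,\partial_f(b^*)$, $\partial_f(b)^*=-q^{-1}\partial_e(b^*)$ are correct and are exactly what is needed to run this computation.

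However, one of your stated ingredients is false, and it fails precisely on the terms where \eqref{eq:right} and \eqref{eq:left} differ most delicately. Writing $M_1=\sum_j\big(\begin{smallmatrix} 0&(t^1_{2-j,-1})^*\\0&0\end{smallmatrix}\big)\ox\big(\begin{smallmatrix} 0&0\\t^1_{2-j,-1}&0\end{smallmatrix}\big)$ and $M_2$ for the other mixed two-tensor, a direct check with $(\omega\ox\rho)^\dag=\rho^*\ox\omega^*$ gives $M_1^\dag=M_1$ and $M_2^\dag=M_2$: the two mixed tensors are \emph{not} interchanged by $\dag$; each is fixed. The apparent swap of coefficient types between \eqref{eq:right} and \eqref{eq:left} (from $M_1\partial_e(b)\partial_f(a)$ to $q^2M_1\partial_f(b)\partial_e(a)$, etc.) therefore cannot come from $\dag$ permuting the tensors. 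It arises because $G=qM_1+q^{-1}M_2$, so the piece $\dag\,\nablar^G([\D,a^*b^*])$ contributes $G\big(q\partial_f(b)\partial_e(a)+q^{-1}\partial_e(b)\partial_f(a)\big)$ through the cross-terms of the twisted Leibniz expansion of $\partial_e\partial_f(a^*b^*)$, while the directly conjugated mixed terms $-\dag\big(M_i\cdots\big)$ exactly cancel the ``unswapped'' contributions; for instance the total $M_1$-coefficient is
\[
q^2\partial_f(b)\partial_e(a)+\partial_e(b)\partial_f(a)-\partial_e(b)\partial_f(a)=q^2\partial_f(b)\partial_e(a),
\]
matching \eqref{eq:left}. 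If you instead feed in ``$M_1^\dag\propto M_2$'' as proposed, this cancellation is destroyed and the mixed-term coefficients come out wrong. The plan is salvageable, but you must first recompute the $\dag$-action on the generating two-tensors (and similarly pin down the $\B$-valued, not merely scalar, coefficients in $Y_k^\dag=q^2\sum_lX_lt^2_{l,2}t^2_{k,-2}$) before the bookkeeping of signs and $q$-powers can be trusted.
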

\begin{proof}
We start with computing $\nablal^{G}$ on exact forms. First, the definition yields
\[
{}_\B\pairing{[\D,b]}{\omega_j^\dag}=(-1)^{1-j}\big(\partial_e(b)(t^1_{2-j,1})^*+\partial_f(b)(t^1_{2-j,-1})^*\big).
\]
Next using the twisted derivation rule \eqref{eq:bi-twisted} and the orthogonality relations \eqref{eq:q-orthog}  in the same way as in Section \ref{sec:junk-diff} we find
\begin{align*}
&[\D,{}_\B\pairing{[\D,b]}{\omega_j^\dag}]\ox\omega_j^\dag\\
&=\begin{pmatrix} 0 & q^{-3/2}\partial_e^2(b)\\q^{-1/2}\partial_f\partial_e(b)&0\end{pmatrix}\ox\begin{pmatrix}0 & q^{-1/2}\\ 0 & 0\end{pmatrix}
+\begin{pmatrix} 0 & q^{1/2}\partial_e\partial_f(b)\\q^{3/2}\partial_f^2(b)&0\end{pmatrix}\ox\begin{pmatrix}0 & 0\\ q^{1/2} & 0\end{pmatrix}\\
&=G\partial_f\partial_e(b)+q^3X_kt^2_{2-k,2}\partial_f^2(b)+q^{-3}Y_kt^2_{2-k,-2}\partial_e^2(b),
\end{align*}
where we also used $\partial_e\partial_f(b)=\partial_f\partial_e(b)$ for $b\in\B$. Now use the Leibniz rule to find
\begin{align*}
\nablal^{G}([\D,b]a)&=\nablal^{G}([\D,ba])-\nablal^{G}(b[\D,a])\\
&=\nablal^{G}([\D,ba])-b\nablal^{G}([\D,a])-[\D,b]\ox[\D,a]\\
&=G\big(\partial_f\partial_e(ba)-b\partial_f\partial_e(a)\big)+q^3X_kt^2_{2-k,2}\big(\partial_f^2(ba)-b\partial_f^2(a)\big)\\
&+q^{-3}Y_kt^2_{2-k,-2}\big(\partial_e^2(ba)-b\partial_e^2(a)\big)
-[\D,b]\ox[\D,a].
\end{align*}
Using the twisted derivation rule \eqref{eq:bi-twisted} yields the three relations
\begin{align*}
\partial_f\partial_e(ba)-b\partial_f\partial_e(a)&=
\partial_f\partial_e(b)a+q\partial_f(b)\partial_e(a)
+q^{-1}\partial_e(b)\partial_f(a)\\
\partial_f^2(ba)-b\partial_f^2(a)
&=\partial_f^2(b)a+(q+q^{-1})\partial_f(b)\partial_f(a)\\
\partial_e^2(ba)-b\partial_e^2(a)
&=\partial_e^2(b)a+(q+q^{-1})\partial_e(b)\partial_e(a).
\end{align*}
Together with expanding the $[\D,b]\ox[\D,a]$ term
\begin{align*}
&[\D,b]\ox[\D,a]=q^{-2}Y_kt^2_{2-k,-2}\partial_e(b)\partial_e(a)
+q^{2}X_kt^2_{2-k,2}\partial_f(b)\partial_f(a)\\
&+\begin{pmatrix} 0&(t^1_{2-j,-1})^*\\0&0\end{pmatrix}\ox\begin{pmatrix} 0&0\\t^1_{2-j,-1}&0\end{pmatrix}\partial_e(b)\partial_f(a)
+\begin{pmatrix} 0&0\\(t^1_{2-j,1})^*&0\end{pmatrix}\ox\begin{pmatrix} 0&t^1_{2-j,1}\\0&0\end{pmatrix}\partial_f(b)\partial_e(a),
\end{align*}
we find that
\begin{align*}
\nablal^{G}&([\D,b]a)
=G\partial_f\partial_e(b)a+q^3X_kt^2_{2-k,2}\partial_f^2(b)a+q^{-3}Y_kt^2_{2-k,-2}\partial_e^2(b)a\\
&+G(q\partial_f(b)\partial_e(a)+q^{-1}\partial_e(b)\partial_f(a))+q^3X_kt^2_{2-k,2}(q+q^{-1})\partial_f(b)\partial_f(a)\\
&+q^{-3}Y_kt^2_{2-k,-2}(q+q^{-1})\partial_e(b)\partial_e(a)
-q^{-2}Y_kt^2_{2-k,-2}\partial_e(b)\partial_e(a)
-q^{2}X_kt^2_{2-k,2}\partial_f(b)\partial_f(a)\\
&-\begin{pmatrix} 0&(t^1_{2-j,-1})^*\\0&0\end{pmatrix}\ox\begin{pmatrix} 0&0\\t^1_{2-j,-1}&0\end{pmatrix}\partial_e(b)\partial_f(a)
-\begin{pmatrix} 0&0\\(t^1_{2-j,1})^*&0\end{pmatrix}\ox\begin{pmatrix} 0&t^1_{2-j,1}\\0&0\end{pmatrix}\partial_f(b)\partial_e(a)\\
&=G\partial_f\partial_e(b)a+q^3X_kt^2_{2-k,2}\partial_f^2(b)a+q^{-3}Y_kt^2_{2-k,-2}\partial_e^2(b)a+q^4X_kt^2_{2-k,2}\partial_f(b)\partial_f(a)\\
&+q^{-4}Y_kt^2_{2-k,-2}\partial_e(b)\partial_e(a)
+q^2\begin{pmatrix} 0&(t^1_{2-j,-1})^*\\0&0\end{pmatrix}\ox\begin{pmatrix} 0&0\\t^1_{2-j,-1}&0\end{pmatrix}\partial_f(b)\partial_e(a)\\
&
+q^{-2}\begin{pmatrix} 0&0\\(t^1_{2-j,1})^*&0\end{pmatrix}\ox\begin{pmatrix} 0&t^1_{2-j,1}\\0&0\end{pmatrix}\partial_e(b)\partial_f(a).
\end{align*}
This completes the proof.
\end{proof}
Comparing Equations \eqref{eq:right} and \eqref{eq:left} we obtain
\begin{prop}
\label{prop:call-this-a-prop?}
The conjugate pair of connections $\nablar^{G}$ and $\nablal^{G}$ satisfy $\sigma\circ \nablar^{G}=\nablal^{G}$.
\end{prop}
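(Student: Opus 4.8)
The plan is to prove $\sigma\circ\nablar^{G}=\nablal^{G}$ by a direct term-by-term comparison of the explicit expansions \eqref{eq:right} and \eqref{eq:left}, both of which are already written in the \emph{same} collection of elementary two-tensors. Concretely, every summand of $\nablar^{G}([\D,b]a)$ is a $\B$-multiple of one of four tensors: the line element $G$, the degree $+2$ and degree $-2$ tensors $Y_kt^2_{2-k,-2}$ and $X_kt^2_{2-k,2}$, and the two degree $0$ generators
\[
M_1=\begin{pmatrix} 0&(t^1_{2-j,-1})^*\\0&0\end{pmatrix}\ox\begin{pmatrix} 0&0\\t^1_{2-j,-1}&0\end{pmatrix},\qquad
M_2=\begin{pmatrix} 0&0\\(t^1_{2-j,1})^*&0\end{pmatrix}\ox\begin{pmatrix} 0&t^1_{2-j,1}\\0&0\end{pmatrix}
\]
(with summation over $j$ understood). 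Since $\sigma$ is a bimodule map it passes through the scalar coefficients in $\B$, so I would only need to evaluate $\sigma$ on these four tensors and match the resulting coefficients against \eqref{eq:left}.

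First I would dispose of the degree $0$ part. From Corollary \ref{cor:Wat-index} the line element is $G=qM_1+q^{-1}M_2$, while Definition \ref{defn:ciggy-butt-brain} gives $\sigma(M_1)=q^{-2}M_2$ and $\sigma(M_2)=q^{2}M_1$; hence
\[
\sigma(G)=q\sigma(M_1)+q^{-1}\sigma(M_2)=q^{-1}M_2+qM_1=G.
\]
Combined with $\partial_e\partial_f(b)=\partial_f\partial_e(b)$ for $b\in\B$, this turns the term $G\,\partial_e\partial_f(b)a$ of \eqref{eq:right} into the term $G\,\partial_f\partial_e(b)a$ of \eqref{eq:left}. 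For the two mixed summands, $\sigma(M_1)=q^{-2}M_2$ carries $M_1\,\partial_e(b)\partial_f(a)$ to $q^{-2}M_2\,\partial_e(b)\partial_f(a)$ and $\sigma(M_2)=q^{2}M_1$ carries $M_2\,\partial_f(b)\partial_e(a)$ to $q^{2}M_1\,\partial_f(b)\partial_e(a)$, which are precisely the last two summands of \eqref{eq:left}.

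Next I would handle the degree $\pm2$ terms using $\sigma(Y_k)=q^{-2}Y_k$ and $\sigma(X_k)=q^{2}X_k$. Applying $\sigma$ simply multiplies the relevant prefactors in \eqref{eq:right} by $q^{\mp2}$: the coefficients $q^{-1},q^{-2}$ on the $Y_k$-terms become $q^{-3},q^{-4}$ and the coefficients $q,q^{2}$ on the $X_k$-terms become $q^{3},q^{4}$, matching \eqref{eq:left} exactly. Collecting the four cases then reproduces \eqref{eq:left} summand by summand, which is the claim.

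I expect no genuine obstacle here, since the computation is pure bookkeeping once \eqref{eq:right} and \eqref{eq:left} are in hand; the only substantive point is recognising $G$ as the combination $qM_1+q^{-1}M_2$ and checking $\sigma(G)=G$, which is exactly the statement that $\sigma$ fixes the metric part while swapping the two off-diagonal degree $0$ pieces. Well-definedness of $\sigma$ as a bimodule map, together with the fact noted in Definition \ref{defn:ciggy-butt-brain} that $\sigma^{2}\neq1$ on $X\oplus Y$, is taken as given.
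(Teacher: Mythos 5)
Your proposal is correct and is essentially identical to the paper's argument: the paper proves the proposition simply by comparing the explicit expansions \eqref{eq:right} and \eqref{eq:left} term by term, which is exactly the bookkeeping you describe (including the observation $\sigma(G)=G$, recorded in Remark \ref{rmk:sigma}). All seven coefficient matches check out, so there is nothing to add.
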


\begin{rmk}
\label{rmk:sigma}
It is worth noting that $\sigma(G)=G$, but 
\begin{equation}
\sigma(C)=-C+2q^{-1}\frac{q^{-2}-q^2}{q^{-2}+q^2}G.
\label{eq:sigma-cee}
\end{equation}

Since we only require that $(1-\Psi)\sigma\nablar^{G}=-(1-\Psi)\nablar^{G}$ for compatibility with the torsion-free condition, to see that \eqref{eq:sigma-cee} is compatible with the torsion-free condition, we need only check that $\frac{1}{\alpha}\ketbra{C}{C}\sigma(C)=-C$. This in turn follows from the definition of the inner product, and the orthogonality of $C$ and $G$. Moreover on ${\rm span}(C,G)$ we have $\sigma^2={\rm Id}$.
\end{rmk}
In Theorem \ref{thm:unique-pods} of the Appendix, we prove that in fact the pair $(\nablar^{G},\sigma)$ is the unique Hermitian torsion-free $\sigma$-$\dag$-bimodule connection.

\section{Curvature of the Podle\'s sphere}

We now have all the requisite structure to compute the curvature of the unique Hermitian torsion-free connection $\nablar^{G}$ on the
Podle\'s sphere. 

\subsection{The Riemann tensor}
We first compute the full Riemann tensor following Definition \ref{def:Riemann} in Section \ref{subsec:curvweitz}. Since we are dealing with a Grassmann connection,  we will use the explicit formulae of Lemma \ref{lem:GrCurve}.
\begin{thm}
The Riemann tensor of the Podle\'{s} sphere is
\begin{align*}
R=\frac{[2]_q}{2}q\,\omega_i\ox C\ox \begin{pmatrix} q^{-2}&0\\0&-q^2\end{pmatrix}\omega_i^\dag=\frac{[2]_q}{2}q\,\omega_i\ox C\ox \omega_i^\dag\begin{pmatrix} -q^{2}&0\\0&q^{-2}\end{pmatrix}.
\end{align*}
\end{thm}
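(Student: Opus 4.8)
The plan is to feed the exact frame $(\omega_j)$ of Lemma \ref{lem:pods-frame} into the explicit Grassmann-curvature formula of Lemma \ref{lem:GrCurve}. Since $\Omega^1_\D(\B)$ is a $\dag$-bimodule and $\nablar^G$ is the Grassmann connection of this frame, that lemma gives
\[
R^{\nablar^G}=\omega_k\ox(1-\Psi)\Big(\sum_j[\D,\pairing{\omega_k}{\omega_j}_\B]\ox[\D,\pairing{\omega_j}{\omega_p}_\B]\Big)\ox\omega_p^\dag,
\]
with the outer indices $k,p$ summed. The whole computation thus reduces to evaluating the bracketed two-tensor $\Xi_{kp}$ and then applying the rank-one projection $1-\Psi=\tfrac1\alpha\ketbra{C}{C}$ of Proposition \ref{prop:all-two-forms}.

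First I would compute the $\B$-valued inner products $\pairing{\omega_k}{\omega_j}_\B$. Using the expression for $\pairing{\omega_k}{\rho}_\B$ from the proof of Lemma \ref{lem:pods-frame} together with the frame formula \eqref{eq:omega}, these reduce to $q$-weighted sums of products $(t^1_{\cdot,\mp1})^{*}t^1_{\cdot,\mp1}$, which the adjoint relations \eqref{eq:q-adjoints} place in $\B$. Next I would apply $[\D,\cdot]$ through \eqref{eq:dee-comm}, expanding each derivative with the twisted Leibniz rule \eqref{eq:bi-twisted} exactly as in the computation of $\nablar^G$ in Section \ref{sec:junk-diff}. This produces, for each pair $(k,p)$, an explicit two-tensor $\Xi_{kp}$ built from matrix elements $t^1$ and $t^2$, naturally splitting into homogeneous components of degrees $\pm2$ and $0$ with respect to the circle action.

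Then I would project with $1-\Psi$. Because $C$ lies in the degree-$0$ component of $T^2_\D$, the degree $\pm2$ (junk $X,Y$) parts of $\Xi_{kp}$ are annihilated and the projection reduces to computing the single coefficient $\pairing{C}{\Xi_{kp}}_\B$. Here the shorthand $\tilde C$ of \eqref{eq:cee} (and $\tilde G$ of \eqref{eq:gee}) makes these inner products tractable: the antisymmetric $q^{-1}$ versus $-q$ weighting in $\tilde C$ is precisely what converts the two surviving off-diagonal contributions into the diagonal matrix factor $(\begin{smallmatrix}q^{-2}&0\\0&-q^2\end{smallmatrix})$ and supplies the overall sign. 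Carrying out the sum over $j$ via the orthogonality relations \eqref{eq:q-orthog} collapses the dependence on $k,p$ to the diagonal, the remaining sum reassembles $\sum_i\omega_i\ox C\ox(\cdots)\omega_i^\dag$, and the scalar prefactor simplifies to $\tfrac{[2]_q}{2}q$ after substituting $\alpha$ from \eqref{eq:alpha} and the $\kappa$-values \eqref{eq:q-kappa}. Finally, the equality of the two stated forms of $R$ is the elementary observation that, since $\omega_i^\dag$ is off-diagonal, $(\begin{smallmatrix}q^{-2}&0\\0&-q^2\end{smallmatrix})\omega_i^\dag=\omega_i^\dag(\begin{smallmatrix}-q^2&0\\0&q^{-2}\end{smallmatrix})$.

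The main obstacle is the projection step: one must track which homogeneous components of $\Xi_{kp}$ pair nontrivially with $C$ and verify that, after the $j$-sum, the surviving coefficient is genuinely the claimed \emph{scalar} diagonal matrix rather than a more general element of $\B$. This is exactly where the rigidity of the exact frame and the orthogonality relations must be exploited to see the products of matrix elements collapse to numbers.
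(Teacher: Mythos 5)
Your proposal is correct and follows essentially the same route as the paper: it invokes the $\dag$-bimodule form of Lemma \ref{lem:GrCurve} for the exact frame, computes $[\D,\pairing{\omega_j}{\omega_k}_\B]$ via the twisted Leibniz rule, collapses the $j$-sum with the orthogonality relations, and then applies $1-\Psi=\tfrac1\alpha\ketbra{C}{C}$, pairing with $C$ to produce the diagonal matrix factor and the prefactor $\tfrac{[2]_q}{2}q$. The paper carries this out by first isolating the identity $[\D,\pairing{\omega_j}{\omega_k}_\B]=(-1)^{1-j}\kappa^1_1t^1_{2-j,0}\omega_k+\kappa^1_1q^{-2+k}\omega_j^\dag t^1_{-2+k,0}$, which reduces $\tilde R$ to $[2]_q\,\omega_i\ox\omega_i^\dag\ox\omega_k\ox\omega_k^\dag$ before projecting, but this is the same computation you describe.
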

\begin{proof}
We start by computing inner products of frame elements and their
differentials. First
\begin{align}
&\pairing{\omega_j}{\omega_k}_\B\nonumber\\
&=q^{-4+j+k}\Tr\left(\begin{pmatrix} q & 0\\ 0 & q^{-1}\end{pmatrix}\begin{pmatrix} 0 & q^{1/2}t^{1*}_{-2+j,-1}\\ q^{-1/2}t^{1*}_{-2+j,1} & 0\end{pmatrix}\begin{pmatrix} 0 & q^{-1/2}t^1_{-2+k,1}\\ q^{1/2}t^1_{-2+k,-1} & 0\end{pmatrix}\right)\nonumber\\
&=q^{-4+j+k}(q^2t^{1*}_{-2+j,-1}t^1_{-2+k,-1}+q^{-2}t^{1*}_{-2+j,1}t^1_{-2+k,1})\nonumber\\
&=(-1)^{1-j}q^{-1+k}t^{1}_{2-j,1}t^1_{-2+k,-1}+(-1)^{1-j}q^{-3+k}t^{1}_{2-j,-1}t^1_{-2+k,1}.
\label{eq:pods-frame-ips}
\end{align}
Then we apply the twisted Leibniz rule and $\partial_e(t^1_{x,1})=\partial_f(t^1_{x,-1})=0$ to find
\begin{align*}
&[\D,\pairing{\omega_j}{\omega_k}_\B]
=(-1)^{1-j}q^{-1+k}[\D,t^{1}_{2-j,1}t^1_{-2+k,-1}]+(-1)^{1-j}q^{-3+k}[\D,t^{1}_{2-j,-1}t^1_{-2+k,1}]\\
&\qquad\qquad=(-1)^{1-j}q^{-1+k}\begin{pmatrix} 0 & q^{-1/2}\partial_e(t^{1}_{2-j,1}t^1_{-2+k,-1})\\
q^{1/2}\partial_f(t^{1}_{2-j,1}t^1_{-2+k,-1}) & 0\end{pmatrix}\\
&\qquad\qquad\qquad\qquad+(-1)^{1-j}q^{-3+k}\begin{pmatrix} 0 & q^{-1/2}\partial_e(t^{1}_{2-j,-1}t^1_{-2+k,1})\\
q^{1/2}\partial_f(t^{1}_{2-j,-1}t^1_{-2+k,1})&0\end{pmatrix}\\
&=(-1)^{1-j}\begin{pmatrix} \hspace{-20mm}0 & \hspace{-20mm}q^{-5/2+k}\kappa^1_0t^{1}_{2-j,1}t^1_{-2+k,0}\\
q^{-3/2+k}\kappa^1_1t^{1}_{2-j,0}t^1_{-2+k,-1} & 0\end{pmatrix}\\
&\qquad\qquad+(-1)^{1-j}\begin{pmatrix} \hspace{-20mm}0 & \hspace{-20mm}q^{-5/2+k}\kappa^1_0t^{1}_{2-j,0}t^1_{-2+k,1}\\
q^{-3/2+k}\kappa^1_1t^{1}_{2-j,-1}t^1_{-2+k,0}&0\end{pmatrix}\\
&=(-1)^{1-j}\kappa^1_1q^{-2+k}\begin{pmatrix} \hspace{-10mm}0 & \hspace{-20mm}q^{-1/2}(t^{1}_{2-j,1}t^1_{-2+k,0}+t^{1}_{2-j,0}t^1_{-2+k,1})\\
\hspace{-0mm}q^{1/2}(t^{1}_{2-j,0}t^1_{-2+k,-1}+t^{1}_{2-j,-1}t^1_{-2+k,0}) & \hspace{-0mm}0\end{pmatrix}\\
&=(-1)^{1-j}\kappa^1_1t^1_{2-j,0}\omega_k+\kappa^1_1q^{-2+k}\omega_j^\dag t^1_{-2+k,0}.
\end{align*}
Thus we find that 
\begin{align*}
&[\D,\pairing{\omega_i}{\omega_j}_\B]\ox[\D,\pairing{\omega_j}{\omega_k}_\B]\\
&=(\kappa^1_1)^2\Big((-1)^{1-i}t^1_{2-i,0}\omega_j+q^{-2+j}\omega_i^\dag t^1_{-2+j,0}\Big)\ox\Big((-1)^{1-j}t^1_{2-j,0}\omega_k+q^{-2+k}\omega_j^\dag t^1_{-2+k,0}\Big)\\
&=(\kappa^1_1)^2\Big((-1)^{i+j}t^1_{2-i,0}\omega_j\ox t^1_{2-j,0}\omega_k+(-1)^{i}t^1_{2-i,0}\omega_j\ox q^{-2+k}\omega_j^\dag t^1_{-2+k,0}\\
&\qquad\qquad\qquad+(-1)^{j}q^{-2+j}\omega_i^\dag t^1_{-2+j,0}\ox t^1_{2-j,0}\omega_k
+q^{-4+j+k}\omega_i^\dag t^1_{-2+j,0}\ox \omega_j^\dag t^1_{-2+k,0}\Big).
\end{align*}
Summing over $j$, using the $\B$-centrality of $G=\sum_j\omega_j\ox\omega_j^\dag$ and Equation \eqref{eq:omega-alt} gives
\begin{align*}
&\sum_j[\D,\pairing{\omega_i}{\omega_j}_\B]\ox[\D,\pairing{\omega_j}{\omega_k}_\B]\\
&=(\kappa^1_1)^2\Big(\sum_j(-1)^{i+j}t^1_{2-i,0}\omega_jt^1_{2-j,0}\ox \omega_k+(-1)^{i}q^{-2+k}t^1_{2-i,0}t^1_{-2+k,0}G\\
&\qquad+\omega_i^\dag \ox\omega_k
+q^{-2+k}\omega_i^\dag\ox \sum_j(-1)^j(\omega_jt^1_{2-j,0})^\dag t^1_{-2+k,0}\Big)\\
&=(\kappa^1_1)^2\Big((-1)^{i}q^{-2+k}t^1_{2-i,0}t^1_{-2+k,0}G+\omega_i^\dag\ox\omega_k\Big),
\end{align*}
where the explicit form of $\omega_j$ and the orthogonality relations Equation \eqref{eq:q-orthog} give the vanishing of the two terms. 
From Equation \eqref{eq:pods-frame-ips} we also have
\[
\pairing{\omega_i}{\omega_k}_\B=\delta_{ik}-(-1)^{i+k}t^1_{2-i,0}t^{1*}_{2-k,0}
\]
so
\[
\sum_j[\D,\pairing{\omega_i}{\omega_j}_\B]\ox[\D,\pairing{\omega_j}{\omega_k}_\B]=(\kappa^1_1)^2(\delta_{ik}\sum_{l}\omega_l\ox\omega_l^\dag-\pairing{\omega_i}{\omega_k}_\B\sum_{l}\omega_l\ox\omega_l^\dag+\omega_i^\dag\ox\omega_k),
\]
and writing
\begin{align*}
\tilde{R}:&=\omega_i\ox\sum_j[\D,\pairing{\omega_i}{\omega_j}_\B]\ox[\D,\pairing{\omega_j}{\omega_k}_\B]\ox\omega_k^\dag\\
&=(\kappa^1_1)^2(\delta_{ik}\omega_i\ox\sum_{l}\omega_l\ox\omega_l^\dag-\omega_k\ox\sum_{l}\omega_l\ox\omega_l^\dag+\omega_i\ox\omega_i^\dag\ox\omega_k)\ox\omega_k^\dag\\
&=(\kappa^1_1)^2(\omega_k\ox\sum_{l}\omega_l\ox\omega_l^\dag-\omega_k\ox\sum_{l}\omega_l\ox\omega_l^\dag+\omega_i\ox\omega_i^\dag\ox\omega_k)\ox\omega_k^\dag\\
&=(\kappa^1_1)^2\omega_i\ox\omega_i^\dag\ox\omega_k\ox\omega_k^\dag=[2]_{q}\omega_i\ox\omega_i^\dag\ox\omega_k\ox\omega_k^\dag,
\end{align*}
we have $R=(1\ox (1-\Psi)\ox 1 )(\tilde{R})$. 
to obtain the curvature tensor 
\begin{align*}
R=\frac{[2]_q}{\alpha}\omega_i\ox C\pairing{C}{\omega_i^\dag\ox\omega_k}_\B\ox\omega_k^\dag
&=\frac{[2]_q}{\alpha}\,\omega_i\ox C\ox \big(\pairing{\omega_i^\dag}{C_{(1)}}_\B C_{(2)}\big)^\dag\\
&=\frac{[2]_q}{\alpha}\,\omega_i\ox C\ox C_{(2)}^\dag \pairing{C_{(1)}}{\omega_i^\dag}_\B\\
&=\frac{[2]_q}{2}q\,\omega_i\ox C\ox \begin{pmatrix} q^{-2}&0\\0&-q^2\end{pmatrix}\omega_i^\dag.\qedhere
\end{align*}
\end{proof}

\subsection{The Ricci and scalar curvature}
We now have the pieces in place to compute the Ricci and scalar curvature of $S^{2}_{q}$.
\begin{prop}
The Ricci curvature is given by 
\[
{\rm Ric}={}_\B\pairing{R}{G}=\frac{[2]_q}{q^2+q^{-2}}\,\omega_i\ox\begin{pmatrix}q^{-4}&0\\0&q^4\end{pmatrix}\omega_i^\dag=\frac{[2]_q}{q^2+q^{-2}}\,\omega_i\ox\omega_i^\dag\begin{pmatrix} q^{4} & 0\\0&q^{-4}\end{pmatrix}.
\]
\end{prop}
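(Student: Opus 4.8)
The plan is to feed the Riemann tensor
\[
R=\frac{[2]_q}{2}q\,\omega_i\ox C\ox \begin{pmatrix} q^{-2}&0\\0&-q^2\end{pmatrix}\omega_i^\dag
\]
computed above into the definition ${\rm Ric}={}_\B\pairing{R}{G}$, which contracts the middle $\Lambda^2_\D(\B)$-slot of $R$ against the quantum metric $G$ through the left $\B$-valued inner product and lands in $T^2_\D(\B)$. Because $R$ has the shape $\omega_i\ox C\ox(\,\cdot\,)\omega_i^\dag$, with the two outer one-form legs carrying the summed index $i$, I would pull $\omega_i$ and $\omega_i^\dag$ out of the contraction, so that the entire problem collapses to pairing the curvature two-form $C$ against $G$ and reading off the resulting $2\times 2$ factor.

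The crucial point is which pairing to use. The ordinary symmetric inner product of $C$ and $G$ as two-tensors vanishes, since $C$ was constructed orthogonal to $Z$, hence to $G$; so ${}_\B\pairing{R}{G}$ must be read as the leg-wise metric contraction, that is, exactly the bilinear pairing of $C$ against a frame that already produced the matrix $\begin{pmatrix} q^{-2}&0\\0&-q^2\end{pmatrix}$ in the Riemann computation. To carry it out I would replace $C$ and $G$ by the shorthands $\tilde C$ of \eqref{eq:cee} and $\tilde G$ of \eqref{eq:gee}, which by Lemma \ref{lem:kernel} give the same inner products against the elementary tensors $E_{ij}\ox E_{kl}$; the matrix-unit bookkeeping is then immediate, and the attendant frame sums over the $t^1_{2-j,0}$ collapse by the orthogonality relations \eqref{eq:q-orthog}. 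The contraction returns once more a diagonal matrix proportional to $\begin{pmatrix} q^{-2}&0\\0&-q^2\end{pmatrix}$.

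Composing this diagonal factor with the one already present in $R$ gives
\[
\begin{pmatrix} q^{-2}&0\\0&-q^2\end{pmatrix}^2=\begin{pmatrix} q^{-4}&0\\0&q^4\end{pmatrix},
\]
while collecting the scalar coefficients converts the prefactor $\tfrac{[2]_q}{2}q$ into $\tfrac{[2]_q}{q^2+q^{-2}}$, which is the first displayed expression. The second form then follows by commuting the diagonal matrix through $\omega_i^\dag$: since $\omega_i^\dag$ is off-diagonal, moving a diagonal matrix across it interchanges its two entries, sending $\begin{pmatrix} q^{-4}&0\\0&q^4\end{pmatrix}$ on the left to $\begin{pmatrix} q^4&0\\0&q^{-4}\end{pmatrix}$ on the right.

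The main obstacle is fixing the contraction convention. A careless reading of ${}_\B\pairing{R}{G}$ as a genuine two-tensor inner product would give zero by the orthogonality $C\perp G$; the content is to recognise that the Ricci contraction is the bilinear metric pairing of the curvature two-form against $G$ --- the same one used to pass from $\tilde R$ to $R$ above --- and then to track carefully both the non-commutative ordering of the $2\times 2$ matrix factors and the $\dag$-reversal, so that the two equivalent displayed forms emerge consistently.
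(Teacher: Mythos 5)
Your overall computational strategy --- replace $C$ and $G$ by the shorthands $\tilde C$ and $\tilde G$, do matrix-unit bookkeeping, and use the orthogonality relations --- is the same as the paper's, and your final answer and the concluding commutation step are right. But there is a genuine gap in how you set up the contraction. The pairing ${}_\B\pairing{R}{G}$ does \emph{not} contract ``the middle $\Lambda^2_\D(\B)$-slot of $R$ against $G$'' with $\omega_i$ and $\omega_i^\dag$ pulled outside: writing $R=\frac{[2]_q}{2}q\,\omega_i\ox C_{(1)}\ox C_{(2)}\ox M\omega_i^\dag$ with $M=\bigl(\begin{smallmatrix}q^{-2}&0\\0&-q^2\end{smallmatrix}\bigr)$, the left inner product on $T^2_\D$ pairs the \emph{last two} legs, namely $C_{(2)}\ox M\omega_i^\dag$, against the two legs of $G$, and the surviving element of $T^2_\D$ is $\omega_i\ox C_{(1)}\cdot{}_\B\pairing{C_{(2)}\ox M\omega_i^\dag}{G}$. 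So the final $\omega_i^\dag$ is consumed by the contraction and only reappears because the map $\rho\mapsto C_{(1)}\,{}_\B\pairing{C_{(2)}\ox\rho}{G}$ happens to reproduce it; ``pairing the curvature two-form $C$ against $G$'' is zero, as you yourself note, and your attempted repair (``the bilinear pairing of $C$ against a frame that already produced the matrix $M$'') does not pin down the correct operation.

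Second, the quantitative heart of the proof is asserted rather than derived. You claim that ``the contraction returns once more a diagonal matrix proportional to $M$'' and then square $M$. This is true --- the map $\rho\mapsto C_{(1)}\,{}_\B\pairing{C_{(2)}\ox\rho}{G}$ acts on one-forms as left multiplication by $\frac{2q^{-1}}{q^2+q^{-2}}M$, which combined with the $M$ already in $R$ gives $M^2=\bigl(\begin{smallmatrix}q^{-4}&0\\0&q^4\end{smallmatrix}\bigr)$ --- but you compute neither the matrix nor the constant $\frac{2q^{-1}}{q^2+q^{-2}}$, and it is precisely this constant that turns the prefactor $\frac{[2]_q}{2}q$ into $\frac{[2]_q}{q^2+q^{-2}}$. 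The paper's proof consists exactly of this computation: expand $\tilde C\ox\tilde G$ into four matrix-unit terms, discard the two that vanish because $E_{12}\perp E_{21}$ for the left inner product, and evaluate ${}_\B\pairing{M\omega_i^\dag}{E_{12}}$, ${}_\B\pairing{M\omega_i^\dag}{E_{21}}$, ${}_\B\pairing{E_{12}}{E_{12}}=q$ and ${}_\B\pairing{E_{21}}{E_{21}}=q^{-1}$ explicitly. Without that step your argument establishes only the shape of the answer, not the answer.
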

\begin{proof}
We start by expanding the inner product ${\rm Ric}={}_\B\pairing{R}{G}$
defining the Ricci curvature to obtain
\begin{align*}
{\rm Ric}&=\frac{[2]_q}{2}q\,\omega_i\ox C_{(1)}{}_\B\pairing{C_{(2)}\ox\begin{pmatrix}q^{-2}&0\\0&-q^2\end{pmatrix}\omega_i^\dag}{qE_{12}\ox E_{21}+q^{-1}E_{21}\ox E_{12}}\\
&=\frac{[2]_q}{q^2+q^{-2}}\,\omega_i\Big(q^{-1}E_{12}\, {}_\B\pairing{E_{21}\,{}_\B\pairing{\begin{pmatrix}q^{-2}&0\\0&-q^2\end{pmatrix}\omega_i^\dag}{qE_{21}}}{E_{12}}\\
&+q^{-1}E_{12}\,{}_\B\pairing{E_{21}\,{}_\B\pairing{\begin{pmatrix}q^{-2}&0\\0&-q^2\end{pmatrix}\omega_i^\dag}{q^{-1}E_{12}}}{E_{21}}\\
&-qE_{21}\,{}_\B\pairing{E_{12}\,{}_\B\pairing{\begin{pmatrix}q^{-2}&0\\0&-q^2\end{pmatrix}\omega_i^\dag}{qE_{21}}}{E_{12}}\\
&-qE_{21}\,{}_\B\pairing{E_{12}\,{}_\B\pairing{\begin{pmatrix}q^{-2}&0\\0&-q^2\end{pmatrix}\omega_i^\dag}{q^{-1}E_{12}}}{E_{21}}\Big).
\end{align*}
where we have abbreviated the computation by taking inner products using
\[
\tilde{G}=qE_{12}\ox E_{21}+q^{-1}E_{21}\ox E_{12}.
\]
First, two of the four terms in our expression for ${\rm Ric}$ are zero as $E_{12}$ and $E_{21}$ are orthogonal for the left inner product as well.
Next
\begin{align*}
&{}_\B\pairing{\begin{pmatrix}q^{-2}&0\\0&-q^2\end{pmatrix}\omega_i^\dag}{E_{21}}=(-1)^iq^{3/2}t^1_{2-i,-1}\\
&{}_\B\pairing{\begin{pmatrix}q^{-2}&0\\0&-q^2\end{pmatrix}\omega_i^\dag}{E_{12}}=(-1)^{i+1}q^{-3/2}t^1_{2-i,1}
\end{align*}
and so
\begin{align*}
{\rm Ric}&\negthinspace=\negthinspace\frac{[2]_q}{q^2+q^{-2}}\,\omega_i\ox\negthinspace\Big((-1)^{1+i}q^{-7/2}t^1_{2-i,1}E_{12}\,{}_\B\pairing{E_{21}}{E_{21}}\negthinspace+\negthinspace(-1)^{i+1}q^{7/2}t^1_{2-i,-1}E_{21}\,{}_\B\pairing{E_{12}}{E_{12}}\Big).
\end{align*}
Since
\[
{}_\B\pairing{E_{12}}{E_{12}}=q,\qquad {}_\B\pairing{E_{21}}{E_{21}}=q^{-1}
\]
we find that
\begin{align*}
{\rm Ric}&=\frac{[2]_q}{q^2+q^{-2}}\,\omega_i\ox\Big(q^{-9/2}(-1)^{1+i}t^1_{2-i,1}E_{12}+q^{9/2}(-1)^{1+i}t^1_{2-i,-1}E_{21}\Big)\\
&=\frac{[2]_q}{q^2+q^{-2}}\,\omega_i\ox\begin{pmatrix} q^{-4} & 0\\0&q^4\end{pmatrix}\omega_i^\dag=\frac{[2]_q}{q^2+q^{-2}}\,\omega_i\ox\omega_i^\dag\begin{pmatrix} q^{4} & 0\\0&q^{-4}\end{pmatrix}.\qedhere
\end{align*}
\end{proof}
Observe that if $q=1$ then the Ricci curvature is proportional to the line element $G=\sum_{i}\omega_i\ox\omega_i^\dag$. When $q\neq 1$, this relation breaks down and the metric on the Podle\'{s} sphere is not Einstein in the classical sense.

\begin{prop}
The scalar curvature $r=\pairing{G}{{\rm Ric}}_\B$ is constant, given by
\[
r=\frac{[2]_q}{q^2+q^{-2}}(q^{-6}+q^6)=[2]_q(1+(q^{-2}-q^2)^2),
\]
and as $q\to 1$ the scalar curvature converges to 2.
\end{prop}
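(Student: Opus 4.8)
The plan is to evaluate the pairing $r=\pairing{G}{{\rm Ric}}_\B$ directly, exploiting the identity \eqref{eq:Riemann}, namely $\pairing{G}{\omega\ox\rho}_\B=\pairing{\omega^\dag}{\rho}_\B$, which reduces a pairing against the line element $G$ to a pairing of one-forms and so avoids the full nested inner product on $T^2_\D$. First I would insert the first expression for the Ricci tensor, pull the scalar prefactor $\tfrac{[2]_q}{q^2+q^{-2}}$ outside, and apply \eqref{eq:Riemann} with $\omega=\omega_i$ and $\rho=\begin{pmatrix}q^{-4}&0\\0&q^4\end{pmatrix}\omega_i^\dag$. Each summand collapses to $\pairing{\omega_i^\dag}{\begin{pmatrix}q^{-4}&0\\0&q^4\end{pmatrix}\omega_i^\dag}_\B=\Phi\big(\omega_i\begin{pmatrix}q^{-4}&0\\0&q^4\end{pmatrix}\omega_i^\dag\big)$, where I have used $(\omega_i^\dag)^*=\omega_i$ for one-forms and $\pairing{\rho}{\eta}_\B=\Phi(\rho^*\eta)$.

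Next I would substitute the explicit frame \eqref{eq:omega} together with its adjoint \eqref{eq:omega-adjoint}, carry out the $2\times2$ matrix product, and apply $\Phi=\Tr(\mathrm{diag}(q,q^{-1})\,\cdot\,)$. The off-diagonal structure of $\omega_i$ makes $\omega_i\begin{pmatrix}q^{-4}&0\\0&q^4\end{pmatrix}\omega_i^\dag$ diagonal, so its weighted trace is a sum of two terms of the shape $(-1)^{1-i}q^{i+3}\,t^1_{-2+i,1}t^1_{2-i,-1}$ and $(-1)^{1-i}q^{i-7}\,t^1_{-2+i,-1}t^1_{2-i,1}$.

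The crucial step is the sum over the frame index $i=1,2,3$. Rewriting $t^1_{-2+i,\pm1}$ as an adjoint via \eqref{eq:q-adjoints} turns each product into $(t^1_{2-i,\mp1})^*t^1_{2-i,\mp1}$, and after collecting the signs and powers of $q$ the two families carry overall weights $q^{6}$ and $q^{-6}$ respectively. Since $2-i$ runs over $\{-1,0,1\}$, the orthogonality relations \eqref{eq:q-orthog} collapse each inner sum to $1$. This is precisely the step that exhibits $r$ as a genuine scalar rather than a general element of $\B$, so it is where I expect the real work to sit; the only difficulty is bookkeeping the signs $(-1)^{1-i}$ and the half-integer powers of $q$ correctly. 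The upshot is that the bracketed sum equals $q^6+q^{-6}$, whence $r=\tfrac{[2]_q}{q^2+q^{-2}}(q^6+q^{-6})$.

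It then remains to verify the purely algebraic identity $\tfrac{q^6+q^{-6}}{q^2+q^{-2}}=q^4-1+q^{-4}=1+(q^{-2}-q^2)^2$, which is immediate from the factorisation $q^6+q^{-6}=(q^2+q^{-2})(q^4-1+q^{-4})$ together with $(q^{-2}-q^2)^2=q^4-2+q^{-4}$. This gives the stated closed form $r=[2]_q(1+(q^{-2}-q^2)^2)$. Finally, letting $q\to1$ sends $[2]_q=q+q^{-1}\to2$ and $(q^{-2}-q^2)^2\to0$, so $r\to2$, recovering the classical scalar curvature of the round sphere.
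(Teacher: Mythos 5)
Your computation is correct and lands exactly on the paper's intermediate expression $\tfrac{[2]_q}{q^2+q^{-2}}\big(q^6(t^1_{2-i,-1})^*t^1_{2-i,-1}+q^{-6}(t^1_{2-i,1})^*t^1_{2-i,1}\big)$ before orthogonality collapses it to a scalar; the sign and $q$-power bookkeeping in your two families checks out, as does the final factorisation $q^6+q^{-6}=(q^2+q^{-2})(q^4-1+q^{-4})$. The one genuine difference from the paper is how you evaluate the pairing against $G$: the paper unwinds the nested inner product on $T^2_\D$ using the shorthand $\tilde{G}=qE_{12}\ox E_{21}+q^{-1}E_{21}\ox E_{12}$, computing expressions of the form $\pairing{E_{21}}{\pairing{E_{12}}{\omega_i}_\B\cdots}_\B$, whereas you invoke the general identity \eqref{eq:Riemann}, $\pairing{G}{\omega\ox\rho}_\B=\pairing{\omega^\dag}{\rho}_\B$, to collapse everything to a single one-form inner product $\Phi\big(\omega_i\,\mathrm{diag}(q^{-4},q^4)\,\omega_i^\dag\big)$ in one step. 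Your route is shorter and arguably cleaner; the only point worth making explicit is that \eqref{eq:Riemann} is stated for $\omega,\rho\in\Omega^1_\D(\B)$, so you should note that $\mathrm{diag}(q^{-4},q^4)\,\omega_i^\dag$ is again a one-form (it is, since the constant diagonal matrix merely rescales the two off-diagonal components, each of which lies in a direct summand of $\Omega^1_\D(\B)$). With that observation in place the argument is complete and equivalent to the paper's.
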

\begin{proof}
Again, this is a computation, with
\begin{align*}
r&=\frac{[2]_q}{q^2+q^{-2}}\Big(q\pairing{E_{21}}{\pairing{E_{12}}{\omega_i}_\B\begin{pmatrix} q^{-4} & 0\\0&q^4\end{pmatrix}\omega_i^\dag}_\B
+\pairing{E_{12}}{\pairing{E_{21}}{\omega_i}_\B\begin{pmatrix} q^{-4} & 0\\0&q^4\end{pmatrix}\omega_i^\dag}_\B\Big)\\
&=\frac{[2]_q}{q^2+q^{-2}}q\pairing{E_{21}}{q^{-1/2}(-1)^{1+i}(t^1_{2-i,-1})^*\begin{pmatrix} q^{-4} & 0\\0&q^4\end{pmatrix}\omega_i^\dag}_\B\\
&\qquad\qquad+\frac{[2]_q}{q^2+q^{-2}}q^{-1}\pairing{E_{12}}{q^{1/2}(-1)^{1+i}(t^1_{2-i,1})^*\begin{pmatrix} q^{-4} & 0\\0&q^4\end{pmatrix}\omega_i^\dag}_\B\\
&=\frac{[2]_q}{q^2+q^{-2}}\Big(q^6(t^1_{2-i,-1})^*t^1_{2-i,-1}+q^{-6}(t^1_{2-i,1})^*t^1_{2-i,1}\Big)\\
&=\frac{[2]_q}{q^2+q^{-2}}\big(q^6+q^{-6}\big).
\end{align*}
The last expression follows from $(q^{-6}+q^{6})(q^{-2}+q^{2})^{-1}=(q^{-4}+q^{4})-1=(q^{-2}-q^2)^2+1$.
\end{proof}

{\bf Question} Is there an inner product on the one-forms for which the scalar curvature is precisely $[2]_q$?

\section{Weitzenbock formula}
\label{sec:weitzy}
\subsection{Dirac spectral triple}
\label{subsec:Dirac}
We will establish that $(\B,L^2(S^+\oplus S^-,h),\D)$ is a Dirac spectral triple by verifying conditions 1-4 of Definition \ref{def: Dirac-spectral-triple}. The first of these conditions is straightforward.
\begin{lemma} For $\rho\otimes \eta\in JT^{2}_{\D}$ we have the equality
\[m\circ\Psi(\rho\otimes \eta)=e^{-\beta}m(G)\pairing{\rho^{\dag}}{\eta}_{\B}.\]
Moreover we have $\sigma(G)=G$ and $m\circ\sigma\circ\Psi=m\circ\Psi$.
\end{lemma}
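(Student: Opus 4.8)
The three assertions are all verifications against the explicit data assembled in Corollary~\ref{cor:Wat-index}, Lemma~\ref{lem:kernel} and Definition~\ref{defn:ciggy-butt-brain}, and the plan is to reduce each one to a check on the $\B$-module generators $G, X_k, Y_k$ of $JT^2_\D$. The single structural point I would record first is that, by Proposition~\ref{prop:all-two-forms}, the complementary projection $1-\Psi=\frac{1}{\alpha}\ketbra{C}{C}$ has range ${\rm span}(C)$, and since $T^2_\D=X\oplus Y\oplus{\rm span}(G)\oplus{\rm span}(C)$ is an orthogonal decomposition with $JT^2_\D=X\oplus Y\oplus{\rm span}_\B(G)$, we obtain ${\rm Im}(\Psi)=JT^2_\D$. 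In particular $\Psi$ restricts to the identity on $JT^2_\D$, so that $m\circ\Psi(\rho\ox\eta)=m(\rho\ox\eta)=\rho\eta$ for $\rho\ox\eta\in JT^2_\D$; this reduces the first claim to the identity $m=-e^{-\beta}m(G)\,g$ on $JT^2_\D$, where $g$ is the contraction of \eqref{eq:Riemann}.

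For the first identity I would check the two right $\B$-linear maps $m$ and $-e^{-\beta}m(G)\,g$ on the generators. On $X_k$ and $Y_k$ both sides vanish: these lie in $\ker m$ by Lemma~\ref{lem:kernel}, while $X_k$ is a sum of $E_{21}\ox E_{21}$ tensors and $Y_k$ a sum of $E_{12}\ox E_{12}$ tensors, so each summand $\rho\ox\eta$ has $\pairing{\rho^\dag}{\eta}_\B=\Phi(\rho\eta)=0$ because $E_{21}E_{21}=E_{12}E_{12}=0$, giving $g(X_k)=g(Y_k)=0$. On $G$ the identity collapses to $g(G)=-e^\beta$, which is exactly \eqref{eq:ee-beta}, together with $m(G)=\left(\begin{smallmatrix}q&0\\0&q^{-1}\end{smallmatrix}\right)$ from Corollary~\ref{cor:Wat-index}; right $\B$-linearity of both maps then propagates the equality from $G$ to all of ${\rm span}_\B(G)$, and hence to $JT^2_\D$.

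For $\sigma(G)=G$ I would substitute the explicit form of $G$ from Corollary~\ref{cor:Wat-index}, writing $G=\sum_j\big(q\,a_j+q^{-1}b_j\big)$ with $a_j$ the degree-zero tensor of type $E_{12}\ox E_{21}$ and $b_j$ that of type $E_{21}\ox E_{12}$ appearing there. Definition~\ref{defn:ciggy-butt-brain} gives $\sigma(a_j)=q^{-2}b_j$ and $\sigma(b_j)=q^{2}a_j$, so that $\sigma(G)=\sum_j\big(q\cdot q^{-2}b_j+q^{-1}\cdot q^{2}a_j\big)=\sum_j\big(q\,a_j+q^{-1}b_j\big)=G$.

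Finally, for $m\circ\sigma\circ\Psi=m\circ\Psi$ I would use ${\rm Im}(\Psi)=JT^2_\D$ once more to reduce to proving $m\circ\sigma=m$ on $JT^2_\D$, again generator by generator: on $G$ this is immediate from $\sigma(G)=G$; on $X_k$ and $Y_k$ the map $\sigma$ only rescales (by $q^{2}$ and $q^{-2}$ respectively), and since $m(X_k)=m(Y_k)=0$ both sides vanish. I do not expect a genuine obstacle here, as the content is bookkeeping with the matrix types; the only points requiring care are correctly identifying ${\rm Im}(\Psi)$ with $JT^2_\D$, tracking the normalisation $e^{-\beta}=(q^2+q^{-2})^{-1}$ in the first identity, and noting that because $m$, $g$, $\sigma$ and $\Psi$ are all $\B$-bimodule maps the generator-wise checks indeed extend to all of $JT^2_\D$.
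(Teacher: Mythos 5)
Your proof is correct and rests on the same ingredients as the paper's: the decomposition $\Psi T^2_\D=X\oplus Y\oplus{\rm span}(G)$ with $X\oplus Y\subset\ker m$, the normalisation $P_Z=e^{-\beta}\ketbra{G}{G}$, and the action of $\sigma$ on the generators. The one substantive difference is scope: the paper computes $m\circ\Psi(\rho\ox\eta)=e^{-\beta}m(G)\pairing{G}{\rho\ox\eta}_\B=e^{-\beta}m(G)\pairing{\rho^\dag}{\eta}_\B$ directly from the rank-one form of $P_Z$ and Equation \eqref{eq:Riemann}, which yields the identity for arbitrary $\rho\ox\eta\in T^2_\D$ (as condition 1 of Definition \ref{def: Dirac-spectral-triple} in fact requires), whereas your reduction via $\Psi|_{JT^2_\D}={\rm id}$ followed by the generator-wise check of $m=-e^{-\beta}m(G)\,g$ only delivers it on $JT^2_\D$ --- exactly what the stated lemma asks for, but slightly less than what is used later. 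On the junk module the two arguments coincide, and your verifications of $g(X_k)=g(Y_k)=0$, $g(G)=-e^\beta$, $\sigma(G)=G$ and $\sigma(X\oplus Y)=X\oplus Y\subset\ker m$ are all correct.
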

\begin{proof} We have that $\Psi T^{2}_{\D}=X\oplus Y\oplus \textnormal{span}(G)$, and $X\oplus Y\subset \ker m$ whereas the projection onto $\textnormal{span}(G)$ is given by $e^{-\beta}|G\rangle\langle G|$. Therefore
\begin{align*}
m\circ\Psi(\rho\ox \eta)=m\left(e^{-\beta}G\pairing{G}{\rho\ox\eta}_{\B}\right)=e^{-\beta}m(G)\pairing{G}{\rho\ox\eta}_{B}=e^{-\beta}m(G)\pairing{\rho^{\dag}}{\eta},
\end{align*}
as claimed. The equality $\sigma(G)=G$ was noted in Remark \ref{rmk:sigma}. By definition of $\sigma$, we have $\sigma(X\oplus Y)=X\oplus Y$. Therefore $m\circ\sigma\circ\Psi=m\circ\Psi$.
\end{proof}

We give the spinor bundles $S^\pm$ the left inner products 
\[
{}_\B\pairing{s_+}{t_+}=\Phi(\ketbra{s_+}{t_+})=qs_+t_+^*,\quad
{}_\B\pairing{s_-}{t_-}=\Phi(\ketbra{s_-}{t_-})=q^{-1}s_-t_-^*,
\]
and observe that they restrict to $\B$-valued inner products on $\mathcal{S}^\pm$.
Now let $\nablal^{\mathcal{S}^{\pm}}$ be the left Grassmann connections of the bundles $S^\pm$ for the left frames 
$s_{\pm1/2,-}:=q^{1/2}t^{1/2}_{\pm1/2,-1/2}$, $s_{\pm1/2,+}:=q^{-1/2}t^{1/2}_{\pm1/2,1/2}$, and $\nabla^{\mathcal{S}}:=\nabla^{\mathcal{S}^{+}}\oplus\nabla^{\mathcal{S}^{-}}$. Moreover the action of the Clifford algebra $\CDB$ on $L^{2}(S,h)$ restricts to an action 
\[
c:\CDB\ox_\B(\mathcal{S}^+\oplus \mathcal{S}^-)\to \mathcal{S}^+\oplus \mathcal{S}^-,
\] 
of the Clifford algebra on spinors, which establishes condition 2 of Definition \ref{def: Dirac-spectral-triple}. We now verify condition 3 of Definition \ref{def: Dirac-spectral-triple}.
\begin{lemma} The Dirac operator $\D$ satisfies $\D=c\circ\nabla^{\mathcal{S}}: \mathcal{S}\to L^{2}(S,h)$.
\end{lemma}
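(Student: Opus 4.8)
The plan is to use that $\D$ and $c\circ\nabla^{\mathcal{S}}$ are first order operators sharing the same symbol, so that their difference is a left $\B$-module map, and then to verify the identity only on the four frame vectors $s_{\pm1/2,\pm}$.

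First I would show that $T:=\D-c\circ\nabla^{\mathcal{S}}$ is left $\B$-linear. For $b\in\B$ and $s\in\mathcal{S}$ the Dirac operator obeys $\D(bs)=[\D,b]s+b\,\D(s)=c([\D,b]\ox s)+b\,\D(s)$, whereas the left Leibniz rule for the Grassmann connection, together with left $\B$-linearity of the Clifford action $c$, gives $c\circ\nabla^{\mathcal{S}}(bs)=c([\D,b]\ox s)+b\,c\circ\nabla^{\mathcal{S}}(s)$. Subtracting, the symbol terms cancel and $T(bs)=b\,T(s)$. Using the left frame reconstruction $s=\sum {}_\B\pairing{s}{s_{m,\pm}}\,s_{m,\pm}$ together with the left linearity of $T$, it then suffices to prove $T(s_{m,\pm})=0$ on the frame elements.

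For $\mathcal{S}^+$ I would compute ${}_\B\pairing{s_{m,+}}{s_{k,+}}=t^{1/2}_{m,1/2}(t^{1/2}_{k,1/2})^*$ and apply the Clifford action \eqref{eq:dee-comm} on the $+$-component, obtaining
\[
c\circ\nabla^{\mathcal{S}^+}(s_{m,+})=\sum_k\partial_f\big(t^{1/2}_{m,1/2}(t^{1/2}_{k,1/2})^*\big)\,t^{1/2}_{k,1/2}.
\]
The decisive, and essentially only, point is that the twisted Leibniz rule \eqref{eq:bi-twisted} has a vanishing cross term here: since $\kappa^{1/2}_{-1/2}=\sqrt{[1]_q^2-[-1]_q^2}=0$ we have $\partial_f\big((t^{1/2}_{k,1/2})^*\big)=0$, so only $\partial_f(t^{1/2}_{m,1/2})\,\partial_k\big((t^{1/2}_{k,1/2})^*\big)=q^{-1/2}t^{1/2}_{m,-1/2}(t^{1/2}_{k,1/2})^*$ remains. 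Summing over $k$ and invoking the orthogonality relation \eqref{eq:q-orthog} in the form $\sum_k(t^{1/2}_{k,1/2})^*t^{1/2}_{k,1/2}=1$ collapses the expression to $q^{-1/2}t^{1/2}_{m,-1/2}$, which is exactly $\D(s_{m,+})$.

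The $\mathcal{S}^-$ case is the mirror image, with ${}_\B\pairing{s_{m,-}}{s_{k,-}}=t^{1/2}_{m,-1/2}(t^{1/2}_{k,-1/2})^*$ and the Clifford action now feeding the $+$-component through $\partial_e$; again a boundary coefficient vanishes, so that $\partial_e$ annihilates $(t^{1/2}_{k,-1/2})^*$, and orthogonality yields $q^{1/2}t^{1/2}_{m,1/2}=\D(s_{m,-})$. Hence $T$ vanishes on the frame and therefore on all of $\mathcal{S}$, establishing $\D=c\circ\nabla^{\mathcal{S}}$. I expect the only genuine obstacle to be bookkeeping: keeping the $q^{\pm1/2}$ factors coming from the Clifford representation and the action of the twist $\partial_k$ consistent throughout, the conceptual content being entirely the vanishing of the cross term together with the orthogonality relations.
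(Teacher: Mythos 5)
Your proof is correct, and it arrives at exactly the same final verification as the paper: both arguments reduce the identity to checking $\D(s_{m,\pm})=c\circ\nablal^{\mathcal{S}^\pm}(s_{m,\pm})$ on the four frame elements. The only structural difference is in how the reduction is organised. The paper expands $s=\sum_i{}_\B\pairing{s}{s_{i,+}}s_{i,+}$, applies the Leibniz rule for $\D$, and then invokes the Hermitian property of the Grassmann connection (via \cite[Proposition 2.24]{MRLC}) to convert the terms $[\D,{}_\B\pairing{s}{s_{i,+}}]s_{i,+}$ into $c\circ\nablal^{\mathcal{S}^+}(s)$ plus a remainder supported on the frame; you instead observe that $T=\D-c\circ\nablal^{\mathcal{S}}$ has vanishing symbol and is therefore left $\B$-linear, which gives the same remainder $\sum_i{}_\B\pairing{s}{s_{i,\pm}}T(s_{i,\pm})$ without appealing to Hermiticity. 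Your route is marginally more self-contained, and you also carry out the frame-element computation explicitly (correctly tracking the vanishing of $\partial_f(t^{1/2}_{i,-1/2})$ and $\partial_e(t^{1/2}_{i,1/2})$, the $q^{\pm1/2}$ factors from \eqref{eq:dee-comm}, and the orthogonality relations \eqref{eq:q-orthog}), whereas the paper leaves that verification to the reader with a pointer to \eqref{eq:derivs}. There is no gap in your argument.
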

\begin{proof}
As Grassmann connections are always Hermitian, we use
\cite[Proposition 2.24]{MRLC} in the computation 
\begin{align*}
\D&\begin{pmatrix}s\\0\end{pmatrix}
=\sum_i[\D,{}_\B\pairing{s}{s_{i,+}}]\begin{pmatrix}s_{i,+}\\0\end{pmatrix}+{}_\B\pairing{s}{s_{i,+}}\D\begin{pmatrix}s_{i,+}\\0\end{pmatrix}\\
&=\sum_ic\Big({}_\B\pairing{\nablal^{\mathcal{S}^+}(s)}{s_{i,+}}\begin{pmatrix}s_{i,+}\\0\end{pmatrix}\negthinspace\Big)\negthinspace-c\Big({}_\B\pairing{s}{\nablal^{\mathcal{S}^+}(s_{i,+})}\begin{pmatrix}s_{i,+}\\0\end{pmatrix}\negthinspace\Big)\negthinspace+{}_\B\pairing{s}{s_{i,+}}\D\begin{pmatrix}s_{i,+}\\0\end{pmatrix}\\
&=\sum_ic\circ\nablal^{\mathcal{S}^+}\begin{pmatrix}s\\0\end{pmatrix}
-c\Big({}_\B\pairing{s}{s_{i,+}}\begin{pmatrix}\nablal^{\mathcal{S}^+}s_{i,+}\\0\end{pmatrix}\Big)+{}_\B\pairing{s}{s_{i,+}}\D\begin{pmatrix}s_{i,+}\\0\end{pmatrix}\\
&=c\circ\nablal^{\mathcal{S}^+}\begin{pmatrix}s\\0\end{pmatrix}
\end{align*}
and one can check the final equality by checking that $c\circ\nablal^{\mathcal{S}}$ and $\D$ agree on frame elements using the formulae \eqref{eq:derivs}.
The same argument holds for $S^-$ as well, and we obtain
\[
\D=c\circ(\nablal^{\mathcal{S}^+}+\nablal^{\mathcal{S}^-})=c\circ \nablal^{\mathcal{S}}.\qedhere
\]
\end{proof}
Next we must check compatibility of the connections, 
meaning that for a one form $\omega$ and spinor $s$ we have
\[
\nablal^{\mathcal{S}}(c(\omega\ox s))=c\circ m\circ(\sigma\ox1)\big(\nablar^{G}(\omega)\ox s +\omega\ox \nablal^{\mathcal{S}}(s)\big)
\]
where $\nablar^G$ is the right Levi-Civita connection on one forms. Here $\sigma$ is the generalised braiding from Definition \ref{defn:ciggy-butt-brain}.

We will compute connections and their squares on the module $\mathcal{S}$, prove the compatibility with the connection on $\Omega^1_\D$ and then prove the Weitzenbock formula. In all of these tasks we use the formulae
\begin{equation}
\partial_b( t^{1/2}_{1/2-k,1/2}t^{1/2\ast}_{1/2-h,1/2})=\left\{\begin{array}{ll} -q^{1/2} t^{1/2}_{1/2-k,1/2}t^{1/2\ast}_{1/2-h,-1/2} & b=e\\
 q^{-1/2}t^{1/2}_{1/2-k,-1/2}t^{1/2\ast}_{1/2-h,1/2} & b=f\end{array}\right.,
\label{eq:half-derivs-again}
\end{equation}
\begin{equation}
\partial_b( t^{1/2}_{1/2-k,-1/2}t^{1/2\ast}_{1/2-h,-1/2})=\left\{\begin{array}{ll} q^{1/2} t^{1/2}_{1/2-k,1/2}t^{1/2\ast}_{1/2-h,-1/2} & b=e\\
 -q^{-1/2}t^{1/2}_{1/2-k,-1/2}t^{1/2\ast}_{1/2-h,1/2} & b=f\end{array}\right.
\label{eq:neg-half-derivs}
\end{equation}
and
\begin{equation}
\partial_e(t^{1/2}_{i,1/2})=0,\quad \partial_e(t^{1/2}_{i,-1/2})=t^{1/2}_{i,1/2},\quad \partial_f(t^{1/2}_{i,1/2})=t^{1/2}_{i,-1/2},\quad \partial_f(t^{1/2}_{i,-1/2})=0.
\label{eq:derivs}
\end{equation}
\[
\partial_e\partial_f(t^{1/2}_{i1/2})=t^{1/2}_{i1/2}
,\qquad
\partial_f\partial_e(t^{1/2}_{i,-1/2})=t^{1/2}_{i,-1/2}
\]
\[
\partial_e\partial_f(t^{1/2}_{i1/2}(t^{1/2}_{l1/2})^*)
=q^{-1}t^{1/2}_{i1/2}(t^{1/2}_{l1/2})^*-qt^{1/2}_{i1/2}(t^{1/2}_{l,-1/2})^*.
\]
Using these formulae we can obtain the Grassmann connections on $\mathcal{S}^\pm$.
\begin{lemma}
The left Grassmann connections $\nablal^{\mathcal{S}^-}$ and $\nablal^{\mathcal{S}^+}$ are given by
\begin{align*}
&\nablal^{\mathcal{S}^+}\begin{pmatrix}b_is_{i,+}\\0\end{pmatrix}
=[\D,b_i]\ox \begin{pmatrix}s_{i,+}\\0\end{pmatrix}+b_i[\D,t_{i,1/2}^{1/2}(t_{l,1/2}^{1/2})^*]\ox \begin{pmatrix}s_{l,+}\\0\end{pmatrix}\\
&=\begin{pmatrix} 0 & q^{-1/2}\partial_e(b_i)\\q^{1/2}\partial_f(b_i)&0\end{pmatrix}\ox\begin{pmatrix}s_{i,+}\\0\end{pmatrix}+b_i\begin{pmatrix} 0 & -t_{i,1/2}^{1/2}(t_{l,-1/2}^{1/2})^*\\t_{i,-1/2}^{1/2}(t_{l,1/2}^{1/2})^*&0\end{pmatrix}\ox \begin{pmatrix}s_{l,+}\\0\end{pmatrix}
\end{align*}
and
\begin{align*}
&\nablal^{\mathcal{S}^-}\begin{pmatrix}0\\c_ls_{l,-}\end{pmatrix}
=[\D,c_l]\ox \begin{pmatrix}0\\s_{l,-}\end{pmatrix}+c_l[\D,t_{l,-1/2}^{1/2}(t_{p,-1/2}^{1/2})^*]\ox \begin{pmatrix}0\\s_{p,-}\end{pmatrix}\\
&=\begin{pmatrix} 0 & q^{-1/2}\partial_e(c_l)\\ q^{1/2}\partial_f(c_l)&0\end{pmatrix}\ox\begin{pmatrix}0\\s_{l,-}\end{pmatrix}+c_l\begin{pmatrix} 0 & t_{l,1/2}^{1/2}(t_{p,-1/2}^{1/2})^*\\ -t_{l,-1/2}^{1/2}(t_{p,1/2}^{1/2})^*&0\end{pmatrix}\ox \begin{pmatrix}0\\s_{p,-}\end{pmatrix}
\end{align*}
\end{lemma}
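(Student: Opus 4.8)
The plan is to compute each left Grassmann connection directly from its defining formula
$\nablal^{v}(x)=[\D,{}_\B\pairing{x}{x_{j}^{\dag}}]\otimes x_{j}^{\dag}$, specialised to the frames $s_{\pm1/2,\pm}$ for $\mathcal{S}^{\pm}$. First I would record the left inner products. Since ${}_\B\pairing{s_+}{t_+}=qs_+t_+^*$ and ${}_\B\pairing{s_-}{t_-}=q^{-1}s_-t_-^*$, and the frame elements carry compensating factors $q^{\mp1/2}$, the inner products $\sum_i{}_\B\pairing{b_is_{i,+}}{s_{i,+}}$ and the analogous sum for $\mathcal{S}^-$ collapse nicely via the orthogonality relations \eqref{eq:q-orthog}. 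The key observation is that the frame is chosen precisely so that the Grassmann connection splits as the sum of an \emph{exterior-derivative term} $[\D,b_i]\ox x_i$ coming from the Leibniz rule and a purely \emph{connection-coefficient term} $b_i[\D,\text{(projection entry)}]\ox x_l$ coming from differentiating the projection $P_\pm$.

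Concretely, I would apply the Leibniz rule for the left connection,
\[
\nablal^{\mathcal{S}^+}\begin{pmatrix}b_is_{i,+}\\0\end{pmatrix}
=[\D,b_i]\ox\begin{pmatrix}s_{i,+}\\0\end{pmatrix}
+b_i\,\nablal^{\mathcal{S}^+}\begin{pmatrix}s_{i,+}\\0\end{pmatrix},
\]
so it suffices to evaluate $\nablal^{\mathcal{S}^+}$ on the frame elements themselves. Feeding $x=s_{i,+}$ into the Grassmann formula and using ${}_\B\pairing{s_{i,+}}{s_{l,+}}=t^{1/2}_{i,1/2}(t^{1/2}_{l,1/2})^*$ (after absorbing the $q^{\mp1/2}$ factors), the connection coefficient becomes $[\D,t^{1/2}_{i,1/2}(t^{1/2}_{l,1/2})^*]\ox s_{l,+}$. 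Evaluating the commutator via \eqref{eq:dee-comm} and the half-spin derivative rules \eqref{eq:half-derivs-again}--\eqref{eq:derivs} yields the off-diagonal matrix $\left(\begin{smallmatrix}0&-t^{1/2}_{i,1/2}(t^{1/2}_{l,-1/2})^*\\ t^{1/2}_{i,-1/2}(t^{1/2}_{l,1/2})^*&0\end{smallmatrix}\right)$, which is exactly the stated form. The computation for $\mathcal{S}^-$ is entirely parallel, using the $\mathcal{S}^-$ frame and inner product, with \eqref{eq:neg-half-derivs} producing the sign-flipped entries displayed for $\nablal^{\mathcal{S}^-}$.

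I do not expect a serious obstacle here; the result is essentially a bookkeeping exercise once the frames and inner products are fixed, and every ingredient (the Grassmann formula, the commutator \eqref{eq:dee-comm}, and the derivative tables \eqref{eq:half-derivs-again}--\eqref{eq:derivs}) is already available. The one point requiring care is the correct placement of the $q^{\pm1/2}$ normalisation factors: because the frame elements $s_{\pm1/2,\pm}$ carry these factors while the inner products ${}_\B\pairing{\cdot}{\cdot}$ carry $q^{\pm1}$, I must verify that the normalisations cancel so that $\sum_l t^{1/2}_{i,1/2}(t^{1/2}_{l,1/2})^*\ox s_{l,+}$ reproduces $s_{i,+}$ under the orthogonality relation, confirming $(s_{i,+})$ is genuinely a frame and that the Leibniz split is consistent. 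After checking this, reading off the matrix entries from \eqref{eq:half-derivs-again} and \eqref{eq:neg-half-derivs} completes the proof.
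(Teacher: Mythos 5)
Your proposal is correct and is exactly the computation the paper intends: the lemma is stated without proof immediately after the derivative tables \eqref{eq:half-derivs-again}--\eqref{eq:derivs}, and the intended argument is precisely your unwinding of the left Grassmann connection for the frames $s_{\pm1/2,\pm}$, checking that the $q^{\pm1/2}$ normalisations cancel against the inner products, splitting off the $[\D,b_i]$ term by the Leibniz rule, and reading the matrix entries from \eqref{eq:half-derivs-again} and \eqref{eq:neg-half-derivs} via \eqref{eq:dee-comm}. (Only note that since the $(s_{i,\pm})$ are left frames of a module that is not a $\dag$-bimodule, the relevant formula is $\nablal(x)=[\D,{}_\B\pairing{x}{x_j}]\ox x_j$ rather than the $\dag$-frame version you quote, but your actual computation already uses the correct one.)
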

Using the formulae for the connections we can check condition 4 of Definition \ref{def: Dirac-spectral-triple}.
\begin{thm}
For $[\D,b]a\in \Omega^1_\D$ we have the compatibility
\begin{align}
&\D\Big([\D,b]a\begin{pmatrix}b_is_{i,+}\\c_ls_{l,-}\end{pmatrix}\Big)=c\Big(\nablal^{\mathcal{S}}\Big([\D,b]a\begin{pmatrix}b_is_{i,+}\\c_ls_{l,-}\end{pmatrix}\Big)\Big)\nonumber\\
&=c\Big((m\circ \sigma)\ox1\Big([\D,b]a\ox\nablal^{\mathcal{S}}\begin{pmatrix}b_is_{i,+}\\c_ls_{l,-}\end{pmatrix}\Big)\Big)+c\Big((m\circ \sigma)\ox1\Big(\nablar^G([\D,b]a)\ox\begin{pmatrix}b_is_{i,+}\\c_ls_{l,-}\end{pmatrix}\Big)\Big).
\label{eq:twisted-Leibniz}
\end{align}
\end{thm}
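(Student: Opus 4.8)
The plan is to treat the two asserted equalities separately, as they have very different content. The first equality, $\D(\omega\cdot s)=c(\nablal^{\mathcal{S}}(\omega\cdot s))$ with $\omega=[\D,b]a$ and $s=(b_is_{i,+},c_ls_{l,-})^T$, is immediate and requires no computation. By condition 2 of Definition \ref{def: Dirac-spectral-triple}, already verified, the Clifford product $\omega\cdot s=c(\omega\ox s)$ lies in $\mathcal{S}$, and the preceding Lemma establishes $\D=c\circ\nablal^{\mathcal{S}}$ on all of $\mathcal{S}$; applying this to the spinor $\omega\cdot s$ gives the first equality at once. All the substance is in the second equality, a twisted Leibniz identity.

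Before computing I would record that both sides of the second equality factor through the balanced tensor product $\Omega^1_\D\ox_\B\mathcal{S}$ and are $\C$-bilinear. The map $c$ is defined on $\Omega^1_\D\ox_\B\mathcal{S}$, so the left side descends automatically. For the right side one checks well-definedness directly: using $\nablar^{G}(\omega a)=\nablar^{G}(\omega)a+\omega\ox[\D,a]$ together with $\nablal^{\mathcal{S}}(as)=[\D,a]\ox s+a\nablal^{\mathcal{S}}(s)$, the two Leibniz corrections produced by moving $a\in\B$ across the balancing cancel, so $(m\circ\sigma\ox1)$ applied to $\nablar^{G}(\omega)\ox s+\omega\ox\nablal^{\mathcal{S}}(s)$ is independent of the representative. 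Consequently it suffices to verify the identity on generators $\omega_j\ox s_{i,\pm}$, where the $\omega_j$ are the exact frame one-forms of Lemma \ref{lem:pods-frame} (which right-generate $\Omega^1_\D$) and $s_{i,\pm}$ are the frame spinors (which left-generate $\mathcal{S}$). This reduces the theorem to finitely many checks; equivalently, one may absorb the trailing $a$ into the spinor and then work with exact $\omega=[\D,b]$ and a general spinor expanded in the frame.

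With this reduction the proof becomes a direct, organised computation. On the left-hand side I would expand $\D([\D,b]\cdot s)$ using the twisted derivation rule \eqref{eq:bi-twisted} for $\partial_e,\partial_f$ together with the explicit spinor-derivative formulas \eqref{eq:half-derivs-again}, \eqref{eq:neg-half-derivs} and \eqref{eq:derivs}. In parallel I would expand the right-hand side using the explicit form \eqref{eq:junk-decomp}/\eqref{eq:right} of $\nablar^{G}([\D,b])$, the Grassmann connections $\nablal^{\mathcal{S}^{\pm}}$ from the preceding Lemma, and the braiding $\sigma$ of Definition \ref{defn:ciggy-butt-brain}; one then applies $m\circ\sigma\ox1$ and finally $c$. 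The two expansions are compared componentwise, with the sums over frame indices collapsed by the orthogonality relations \eqref{eq:q-orthog}.

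The main obstacle is the bookkeeping of the powers of $q$, and this is also where the whole statement earns its keep. Distributing $\D$ across the product $\omega\cdot s$ via \eqref{eq:bi-twisted} inserts the twist factors $\partial_k^{\pm1}$, hence powers $q^{\pm j}$, on whichever factor is \emph{not} differentiated; on the other side $\sigma$ rescales the degree $\pm2$ pieces $X_k,Y_k$ by $q^{\mp2}$ and swaps the two mixed degree-zero pieces with prefactors $q^{\pm2}$. The identity holds precisely because, component by component (organised by degree under the circle action, i.e.\ by which off-diagonal matrix unit occurs), the $q$-powers generated by the twisting $\partial_k$ coincide with those built into $\sigma$. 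Thus the real work is to separate the output of each side into its homogeneous degrees $\{-2,0,2\}$ and verify the match in each, and the agreement of these $q$-powers is exactly what certifies that the $\sigma$ of Definition \ref{defn:ciggy-butt-brain} is the correct braiding making $(\B,L^2(S,h),\D)$ a Dirac spectral triple.
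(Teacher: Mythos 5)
Your proposal is correct and follows essentially the same route as the paper: the first equality is just the preceding lemma $\D=c\circ\nablal^{\mathcal{S}}$ applied to the spinor $c(\omega\ox s)$ (legitimate since condition 2 puts $c(\omega\ox s)\in\mathcal{S}$), and the second equality is verified by the same direct expansion of both sides using the explicit formulas for $\nablar^{G}$, $\nablal^{\mathcal{S}^{\pm}}$ and $\sigma$, organised by homogeneous degree, with $\ker(m)$ discarded and the frame sums collapsed by the orthogonality relations. The one caveat is that your claim of a reduction to ``finitely many checks'' on $\omega_j\ox s_{i,\pm}$ overreaches --- a general element of $\Omega^1_\D\ox_\B\mathcal{S}$ is a sum of $\omega_j\ox b\,s_{i,\pm}$ with $b\in\B$ not removable --- but your ``equivalently'' formulation (exact $\omega=[\D,b]$, or $[\D,b]a$, against a general frame-expanded spinor) is the correct reduction and is exactly the generality in which the paper carries out the computation.
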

\begin{proof}

Let $b,a\in\B$ and define a one-form by
\[
[\D,b]a=\begin{pmatrix} 0&q^{-1/2}\partial_e(b)a\\q^{1/2}\partial_f(b)a&0\end{pmatrix}.
\]
We compute the left hand side of Equation \eqref{eq:twisted-Leibniz}, ignoring elements of $\ker(m)\subset T^2_\D(\B)$ as they act by zero on the spinor bundle. We obtain
\begin{align*}
&c\left(\nablal^{\mathcal{S}}\left([\D,b]a\begin{pmatrix}b_is_{i,+}\\c_ls_{l,-}\end{pmatrix}\right)\right)\\
&=\begin{pmatrix} q\partial_e\partial_f(b)a&\hspace{-20pt}0\\0&\hspace{-10pt}q^{-1}\partial_e\partial_f(b)a\end{pmatrix}
\begin{pmatrix}b_is_{i,+}\\c_ls_{l,-}\end{pmatrix}
+\begin{pmatrix}q^{2}\partial_f(b)\partial_e(a)b_is_{i,+}+q^{2}\partial_f(b)a\partial_e(b_i)s_{i,+}\\q^{-2}\partial_e(b)\partial_f(a)c_ls_{l,-}+q^{-2}\partial_e(b)a\partial_f(c_l)s_{l,-}\end{pmatrix},
\end{align*}
the last line following since $\partial_f(t_{l,-1/2})=\partial_e(t_{i,1/2})=0$.
Again omitting elements of $\ker(m)$ (which we recall is preserved by $\sigma$), the second term on the right hand side of Equation \eqref{eq:twisted-Leibniz} is
\begin{align*}
&c\circ (m\circ\sigma)\ox1\left(\nablar^G([\D,b]a)\ox\begin{pmatrix}b_is_{i,+}\\c_ls_{l,-}\end{pmatrix}\right)\\
&=c\circ(m\circ\sigma)\ox1\Big(\Big(G \partial_e\partial_f(b)a\begin{pmatrix}b_is_{i,+}\\c_ls_{l,-}\end{pmatrix}\Big)+
\begin{pmatrix}0& \hspace{-5pt}(t^1_{2-j,-1})^*\\0& \hspace{-20pt}0\end{pmatrix}\ox\begin{pmatrix} 0&\hspace{-10pt}0\\t^1_{2-j,-1}&\hspace{-5pt}0\end{pmatrix}\partial_e(b)\partial_f(a)\Big)\\
&+
c\circ(m\circ\sigma)\ox1\left(\begin{pmatrix} 0&0\\(t^1_{2-j,1})^*&0\end{pmatrix}\ox\begin{pmatrix} 0&t^1_{2-j,1}\\0&0\end{pmatrix}\partial_f(b)\partial_e(a)\right)\begin{pmatrix}b_is_{i,+}\\c_ls_{l,-}\end{pmatrix}.
\end{align*}
Applying the braiding $\sigma$ (recalling that $\sigma$ maps $\ker(m)$ to itself) and then implementing the action of forms on $\mathcal{S}$ gives
\begin{align*}
&c\left((m\circ\sigma)\ox1\left(\nablar^G([\D,b]a)\ox\begin{pmatrix}b_is_{i,+}\\c_ls_{l,-}\end{pmatrix}\right)\right)\\
&=c\Big(G \partial_e\partial_f(b)a\begin{pmatrix}b_is_{i,+}\\c_ls_{l,-}\end{pmatrix}\Big)+c\left(q^{-2}\begin{pmatrix} 0&0\\(t^1_{2-j,1})^*&0\end{pmatrix}\ox\begin{pmatrix} 0&t^1_{2-j,1}\\0&0\end{pmatrix}\partial_e(b)\partial_f(a)\right)\\
&+
c\left(q^2\begin{pmatrix} 0&(t^1_{2-j,-1})^*\\0&0\end{pmatrix}\ox\begin{pmatrix} 0&0\\t^1_{2-j,-1}&0\end{pmatrix}\partial_f(b)\partial_e(a)\right)\begin{pmatrix}b_is_{i,+}\\c_ls_{l,-}\end{pmatrix}\\
&=\begin{pmatrix} q \partial_e\partial_f(b)a+q^{2}\partial_f(b)\partial_e(a)&0\\0&q^{-1} \partial_e\partial_f(b)a+q^{-2}\partial_e(b)\partial_f(a)\end{pmatrix}\begin{pmatrix}b_is_{i,+}\\c_ls_{l,-}\end{pmatrix}.
\end{align*}
The first term on the right hand side of Equation \eqref{eq:twisted-Leibniz} is
\begin{align*}
&[\D,b]a\ox\nablal^{\mathcal{S}}\begin{pmatrix}b_is_{i,+}\\c_ls_{l,-}\end{pmatrix}\\
&=\begin{pmatrix}0&q^{-1/2}\partial_e(b)a\\ q^{1/2}\partial_f(b)a&0\end{pmatrix}
\ox
\begin{pmatrix}\hspace{-50pt}0&\hspace{-50pt}q^{-1/2}\partial_e(c_lt_{l,-1/2})t_{p,-1/2}^*\\
q^{1/2}\partial_f(c_lt_{l,-1/2}t_{p,-1/2}^*)&0\end{pmatrix}\ox\begin{pmatrix}0\\s_{p,-}\end{pmatrix}\\
&+\begin{pmatrix}0&q^{-1/2}\partial_e(b)a\\ q^{1/2}\partial_f(b)a&0\end{pmatrix}
\ox\begin{pmatrix} \hspace{-40pt}0&\hspace{-40pt}q^{-1/2}\partial_e(b_it_{i,1/2}t_{k,1/2}^*)\\
 q^{1/2}\partial_f(b_it_{i,1/2})t_{k,1/2}^*&0\end{pmatrix}\ox\begin{pmatrix}s_{k,+}\\0\end{pmatrix}\\
&= \Bigg(\begin{pmatrix} 0&0\\(t^1_{2-j,1})^*&0\end{pmatrix}\ox\begin{pmatrix} 0&t^1_{2-j,1}\\0&0\end{pmatrix}\partial_f(b)a\partial_e(c_lt_{l,-1/2})t_{p,-1/2}^*\\
&+\begin{pmatrix} 0&(t^1_{2-j,-1})^*\\0&0\end{pmatrix}\ox\begin{pmatrix} 0&0\\t^1_{2-j,-1}&0\end{pmatrix}\partial_e(b)a\partial_f(c_l)t_{l,-1/2}t_{p,-1/2}^*\Bigg)\ox\begin{pmatrix}0\\s_{p,-}\end{pmatrix}\\
&+\Bigg(\begin{pmatrix} 0&0\\(t^1_{2-j,1})^*&0\end{pmatrix}\ox\begin{pmatrix} 0&t^1_{2-j,1}\\0&0\end{pmatrix}\partial_f(b)a\partial_e(b_i)t_{i,1/2}t_{k,1/2}^*\\
&+\begin{pmatrix} 0&(t^1_{2-j,-1})^*\\0&0\end{pmatrix}\ox\begin{pmatrix} 0&0\\t^1_{2-j,-1}&0\end{pmatrix}\partial_e(b)a\partial_f(b_it_{i,1/2})t_{k,1/2}^*\Bigg)\ox\begin{pmatrix}s_{k,+}\\0\end{pmatrix}
\end{align*}
where simplification occurs in two terms due to orthogonality, the derivation rule, and $\partial_f(t_{l,-1/2})=\partial_e(t_{i,1/2})=0$.
Applying $\sigma\ox1$ yields
\begin{align*}
&\sigma\ox1\left([\D,b]a\ox\nablal^{\mathcal{S}}\begin{pmatrix}b_is_{i,+}\\c_ls_{l,-}\end{pmatrix}\right)\\
&= \Bigg(q^2 \begin{pmatrix} 0&(t^1_{2-j,-1})^*\\0&0\end{pmatrix}\ox\begin{pmatrix} 0&0\\t^1_{2-j,-1}&0\end{pmatrix}q^{1/2}\partial_f(b)aq^{-1/2}\partial_e(c_lt_{l,-1/2})t_{p,-1/2}^*\\
&+q^{-2}\begin{pmatrix} 0&0\\(t^1_{2-j,1})^*&0\end{pmatrix}\ox\begin{pmatrix} 0&t^1_{2-j,1}\\0&0\end{pmatrix}q^{-1/2}\partial_e(b)aq^{1/2}\partial_f(c_l)t_{l,-1/2}t_{p,-1/2}^*\Bigg)\ox\begin{pmatrix}0\\s_{p,-}\end{pmatrix}\\
&+\Bigg(q^2 \begin{pmatrix} 0&(t^1_{2-j,-1})^*\\0&0\end{pmatrix}\ox\begin{pmatrix} 0&0\\t^1_{2-j,-1}&0\end{pmatrix}q^{1/2}\partial_f(b)aq^{-1/2}\partial_e(b_i)t_{i,1/2}t_{k,1/2}^*\\
&+q^{-2}\begin{pmatrix} 0&0\\(t^1_{2-j,1})^*&0\end{pmatrix}\ox\begin{pmatrix} 0&t^1_{2-j,1}\\0&0\end{pmatrix}q^{-1/2}\partial_e(b)a\partial_f(b_it_{i,1/2})t_{k,1/2}^*\Bigg)\ox\begin{pmatrix}s_{k,+}\\0\end{pmatrix}
\end{align*}
and then applying the action gives
\[
c\left((m\circ\sigma)\ox1\left([\D,b]a\ox\nablal^{\mathcal{S}}\begin{pmatrix}b_is_{i,+}\\c_ls_{l,-}\end{pmatrix}\right)\right)=\begin{pmatrix}0\\q^{-2}\partial_e(b)a\partial_f(c_l)s_{l,-}\end{pmatrix}
+\begin{pmatrix}q^2\partial_f(b)a\partial_e(b_i)s_{i,+}\\0\end{pmatrix}.
\]
Combining these calculations yields the result.
\end{proof}
\begin{thm} 
The spectral triple $(\B,L^{2}(S,h),\D)$ is a Dirac spectral triple relative to $(\Omega^{1}_{\D}(\B),\dag,\pairing{\cdot}{\cdot},\sigma)$. The connection Laplacian $\Delta^{\mathcal{S}}$ of Equation \eqref{eq:conn-lap} and the Dirac operator $\D$ satisfy the Weitzenbock formula
\[
\D^2=\Delta^{\mathcal{S}}+c\circ\sigma(R^{\nablal^{\mathcal{S}}}).
\]
\end{thm}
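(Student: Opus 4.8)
The plan is to deduce the Weitzenbock formula directly from the abstract result, Theorem \ref{thm:Weitz1}, by assembling the conditions of Definition \ref{def: Dirac-spectral-triple} that have been checked piecemeal throughout this section. Thus the argument is essentially one of bookkeeping: there is no new computation to perform here, since all of the analytic and algebraic content has already been established in the preceding lemmas and theorems.

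First I would confirm that $(\B,L^2(S,h),\D)$ is a Dirac spectral triple relative to $(\Omega^1_\D(\B),\dag,\pairing{\cdot}{\cdot},\sigma)$ by collecting the four conditions. Condition 1, the Clifford relation \eqref{eq:Clifford}, is the first Lemma of Section \ref{sec:weitzy}. Condition 2 holds with $\X=\mathcal{S}=\mathcal{S}^+\oplus\mathcal{S}^-$ and positive functional the Haar state $h$, since $\H=L^2(S,h)$ by construction and the Clifford action restricts to $c:\CDB\ox_\B\mathcal{S}\to\mathcal{S}$. Condition 3, the identity $\D=c\circ\nablal^{\mathcal{S}}$, is the Lemma computing $\D$ against the left frames $s_{\pm1/2,\pm}$ of $\mathcal{S}^{\pm}$. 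Condition 4, the compatibility \eqref{eq:compatible2}, is precisely the content of the compatibility Theorem establishing \eqref{eq:twisted-Leibniz}, in which $(\nablar^G,\sigma)$ is the Hermitian torsion-free $\dag$-bimodule connection produced at the end of Section \ref{sec:junk-diff} and in Proposition \ref{prop:call-this-a-prop?}, and shown to be unique in Theorem \ref{thm:unique-pods} of the Appendix.

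Next I would check the two additional hypotheses required by Theorem \ref{thm:Weitz1}, namely $\Psi(G)=G$ and $m\circ\sigma\circ\Psi=m\circ\Psi$. Both were recorded in the first Lemma of this section: since $\Psi T^2_\D=X\oplus Y\oplus\mathrm{span}(G)$, the line element $G$ lies in the range of $\Psi$, giving $\Psi(G)=G$; and as $\sigma$ preserves each summand $X$, $Y$ and $\mathrm{span}(G)$ while fixing $G$ (Remark \ref{rmk:sigma}), post-composing with $m$ yields $m\circ\sigma\circ\Psi=m\circ\Psi$.

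With every hypothesis in place, I would apply Theorem \ref{thm:Weitz1} with $\X=\mathcal{S}$ and $\nablal^{\X}=\nablal^{\mathcal{S}}$. The connection Laplacian $\Delta^{\mathcal{S}}$ of \eqref{eq:conn-lap} is by definition the operator appearing in that theorem, and the curvature $R^{\nablal^{\mathcal{S}}}$ lies in $\Lambda^2_\D(\B)\ox_\B\mathcal{S}$ by Definition \ref{def:Riemann}, so that $m\circ\sigma\ox 1$ followed by the Clifford action $c$ is well defined. Formula \eqref{eq:Weitzy} then reads
\[
\D^2(x)=\Delta^{\mathcal{S}}(x)+c\circ(m\circ\sigma\ox 1)\big(R^{\nablal^{\mathcal{S}}}(x)\big),
\]
which, under the shorthand $c\circ\sigma:=c\circ(m\circ\sigma\ox 1)$ on $\Lambda^2_\D(\B)\ox_\B\mathcal{S}$, is exactly the asserted identity. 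The only genuine care needed here is matching this shorthand to the composite of \eqref{eq:Weitzy}; the true obstacle of the section was the compatibility condition 4, the most delicate verification, already discharged above, rather than anything in this final assembly.
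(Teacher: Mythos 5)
Your proposal is correct and follows essentially the same route as the paper: the paper's own proof likewise just observes that the four conditions of Definition \ref{def: Dirac-spectral-triple} have been verified in the preceding lemmas, and then invokes Theorem \ref{thm:Weitz1} to obtain \eqref{eq:Weitzy}, with $c\circ\sigma$ understood as shorthand for $c\circ(m\circ\sigma\ox 1)$. Your explicit check of the extra hypotheses $\Psi(G)=G$ and $m\circ\sigma\circ\Psi=m\circ\Psi$ is a welcome (slightly more careful) touch, but it does not change the argument.
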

\begin{proof} We have verified the conditions of Definition \ref{def: Dirac-spectral-triple}, so $(\B,L^{2}(S,h),\D)$ is a Dirac spectral triple over the braided Hermitian differential structure $(\Omega^{1}_{\D}(\B),\dag,\pairing{\cdot}{\cdot},\sigma)$. Therefore the connection Laplacian $\Delta^{\mathcal{S}}:\mathcal{S}\to L^{2}(S,h)$ of Equation \eqref{eq:conn-lap} is a well-defined operator. By Theorem \ref{thm:Weitz1}, the operators $\D$ and $\Delta^{\mathcal{S}}$ are related via the Weitzenbock formula
\[\D^{2}=\Delta^{\mathcal{S}}+c\circ m\circ\sigma\ox 1(R^{\nablal^{\mathcal{S}}}).\qedhere\]
\end{proof}
\subsection{Curvature of the spinor bundle and positivity of the Laplacian}
We now further analyse the Weitzenbock formula for the Podle\'s sphere. Our first goal is to compute the curvature term $c\circ m\circ\sigma\ox 1(R^{\nablal^{\mathcal{S}}})$.
\begin{prop}
The curvature of $\mathcal{S}^+\oplus \mathcal{S}^-$ is given by
\begin{equation}
\label{eq:spinorcurvature}
R^{\nablal^\mathcal{S}}\begin{pmatrix}b_is_{i,+}\\c_ls_{l,-}^{1/2}\end{pmatrix}
=\frac{q}{2}C\ox \begin{pmatrix} -q^{-1}&0\\0&q\end{pmatrix}\begin{pmatrix}b_is_{i,+}\\c_ls_{l,-}\end{pmatrix},
\end{equation}
and its Clifford representation is given by $$c\circ m\circ\sigma\ox 1(R^{\nablal^{\mathcal{S}}})=\frac{1}{q^{-2}+q^2}\begin{pmatrix} q^{2}&0\\0&q^{-2}\end{pmatrix}.$$
\end{prop}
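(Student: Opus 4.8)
The plan is to compute the spinor curvature $R^{\nablal^\mathcal{S}}$ directly from the closed form for the curvature of a Grassmann connection given in the left-module case of Lemma \ref{lem:GrCurve}, and then to evaluate the Clifford representation $c\circ(m\circ\sigma)\ox 1$ on the result. Because $\mathcal{S}=\mathcal{S}^+\oplus\mathcal{S}^-$ is an orthogonal direct sum of left $\B$-modules and the connection $\nablal^\mathcal{S}=\nablal^{\mathcal{S}^+}\oplus\nablal^{\mathcal{S}^-}$ respects the grading, I would treat the two summands separately and recombine at the end. Throughout, the computation is the spinor analogue of the Riemann tensor computation already carried out for the one-forms.

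First I would record the frame inner products. With the left frames $s_{i,+}=q^{-1/2}t^{1/2}_{i,1/2}$, $s_{l,-}=q^{1/2}t^{1/2}_{l,-1/2}$ and the weighted left inner products ${}_\B\pairing{s_+}{t_+}=qs_+t_+^*$, ${}_\B\pairing{s_-}{t_-}=q^{-1}s_-t_-^*$, the $q$-powers cancel and one obtains simply ${}_\B\pairing{s_{p,+}}{s_{j,+}}=t^{1/2}_{p,1/2}(t^{1/2}_{j,1/2})^*$ and ${}_\B\pairing{s_{p,-}}{s_{j,-}}=t^{1/2}_{p,-1/2}(t^{1/2}_{j,-1/2})^*$. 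Next I would compute the commutators $[\D,{}_\B\pairing{s_{p,\pm}}{s_{j,\pm}}]$ using the half-spin derivative rules \eqref{eq:half-derivs-again}--\eqref{eq:neg-half-derivs} together with \eqref{eq:dee-comm}, form the two-tensor $[\D,{}_\B\pairing{s_p}{s_j}]\ox[\D,{}_\B\pairing{s_j}{s_k}]$, and sum over the intermediate index $j$ using the orthogonality relations \eqref{eq:q-orthog}. As in the Riemann tensor calculation I expect the intermediate sum to collapse, leaving a term in $\ker m$ plus a single two-tensor pointing in the $C$-direction. Applying $1-\Psi=\tfrac{1}{\alpha}\ketbra{C}{C}$, whose action on two-tensors is computed through the shorthand $\tilde C$ of \eqref{eq:cee}, then projects onto ${\rm span}(C)$ and yields the diagonal matrix, giving $R^{\nablal^\mathcal{S}}=\tfrac{q}{2}C\ox\begin{pmatrix}-q^{-1}&0\\0&q\end{pmatrix}$.

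For the Clifford representation I would use $\sigma(C)=-C+2q^{-1}\tfrac{q^{-2}-q^2}{q^{-2}+q^2}G$ from Remark \ref{rmk:sigma}, so that $m\circ\sigma(C)=-m(C)+2q^{-1}\tfrac{q^{-2}-q^2}{q^{-2}+q^2}m(G)$. Both products are diagonal: $m(G)=\begin{pmatrix}q&0\\0&q^{-1}\end{pmatrix}$ by Corollary \ref{cor:Wat-index}, while applying $m$ to \eqref{eq:volume-form} and using \eqref{eq:q-orthog} gives $m(C)=\tfrac{2q^{-1}}{q^{-2}+q^2}\begin{pmatrix}q^{-1}&0\\0&-q\end{pmatrix}$. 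Substituting and simplifying produces $m(\sigma(C))=\tfrac{2}{q^{-2}+q^2}\begin{pmatrix}-q^2&0\\0&q^{-4}\end{pmatrix}$. Realising the product $m(\sigma(C))$ as an operator on $\mathcal{S}$ via $c$ and multiplying by the diagonal factor $\tfrac{q}{2}\begin{pmatrix}-q^{-1}&0\\0&q\end{pmatrix}$ coming from the curvature collapses the expression to $\tfrac{1}{q^{-2}+q^2}\begin{pmatrix}q^2&0\\0&q^{-2}\end{pmatrix}$, as claimed.

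The main obstacle is the first step. As in the Riemann tensor proof, the difficulty is to organise the triple sum in the Grassmann curvature formula so that the intermediate-index summation telescopes cleanly: the bookkeeping of the $q$-powers attached to each matrix entry and the repeated use of \eqref{eq:q-orthog} and \eqref{eq:q-adjoints} is delicate, and one must check that every contribution outside ${\rm span}(C)$ --- namely the $X_k,Y_k$ and $G$ components --- either vanishes under $1-\Psi$ or cancels against another term. Once the curvature has been brought to the form $\tfrac{q}{2}C\ox(\text{diagonal})$, the remaining Clifford computation is a short, purely matricial verification.
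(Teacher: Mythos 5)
Your proposal is correct and follows essentially the same route as the paper: the paper computes $(1\ox\nablal^{\mathcal{S}^\pm})\circ\nablal^{\mathcal{S}^\pm}$ and $(\nablar^G\ox1)\circ\nablal^{\mathcal{S}^\pm}$ directly rather than quoting the closed Grassmann formula of Lemma \ref{lem:GrCurve}, but since that lemma is just the packaged form of the same computation, the content is identical, and your frame inner products, your value of $m(\sigma(C))$, and the final matrix all agree with the paper's. The one point to keep in mind when executing the first step is that the surviving degree-zero two-tensor is not purely in the $C$-direction but has a nonzero $G$-component, which is only removed by the projection $1-\Psi=\tfrac{1}{\alpha}\ketbra{C}{C}$ — your proposal already flags this check, so no gap.
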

\begin{proof}
To compute the curvature of $\nablal^{\mathcal{S}}$ we 
first observe that the computation
\begin{align*}
1\ox\nablal^{\mathcal{S}^-}([\D,c_lt_{l,-1/2}t_{m,-1/2}^*]\ox t_{m,-1/2})
&=[\D,c_lt_{l,-1/2}t_{m,-1/2}^*]\ox[\D,t_{m,-1/2}t_{p,-1/2}^*]\ox t_{p,-1/2}\\
&=c_l[\D,t_{l,-1/2}t_{m,-1/2}^*]\ox[\D,t_{m,-1/2}t_{p,-1/2}^*]\ox t_{p,-1/2}
\end{align*}
shows that the $[\D,c_l]$ term vanishes because it is multiplied by $v^*[\D,p]v=v^*p[\D,p]pv=0$ where $v$ is the stabilisation map defined by the frame, and $p=vv^*$. The same applies to the $\nablal^{\mathcal{S}^+}$ part of the computation, and we use this freely below.
So we start by computing (throwing away elements of $\ker(m)$ 
and the $[\D,b_i]$ term at the last step)
\begin{align*}
&1\ox\nablal^{\mathcal{S}^+}\circ\nablal^{\mathcal{S}^+}\begin{pmatrix}b_is_{i,+}\\0\end{pmatrix}
=\begin{pmatrix} 0 & q^{-1/2}\partial_e(b_i)\\q^{1/2}\partial_f(b_i)&0\end{pmatrix}\ox\nablal^{\mathcal{S}^+}\begin{pmatrix}s_{i,+}\\0\end{pmatrix}\\
&+\begin{pmatrix} 0 & -b_it_{i,1/2}^{1/2}(t_{k,-1/2}^{1/2})^*\\b_it_{i,-1/2}^{1/2}(t_{k,1/2}^{1/2})^*&0\end{pmatrix}\ox \nablal^{\mathcal{S}^+}\begin{pmatrix}s_{k,+}\\0\end{pmatrix}\\
&=\begin{pmatrix} 0 & q^{-1/2}\partial_e(b_i)\\q^{1/2}\partial_f(b_i)&0\end{pmatrix}\ox\begin{pmatrix} 0 & -t_{i,1/2}^{1/2}(t_{m,-1/2}^{1/2})^*\\t_{i,-1/2}^{1/2}(t_{m,1/2}^{1/2})^*&0\end{pmatrix}\ox \begin{pmatrix}s_{m,+}\\0\end{pmatrix}\\
&+\begin{pmatrix} 0 & -b_it_{i,1/2}^{1/2}(t_{l,-1/2}^{1/2})^*\\b_it_{i,-1/2}^{1/2}(t_{l,1/2}^{1/2})^*&0\end{pmatrix}\ox\begin{pmatrix} 0 & -t_{l,1/2}^{1/2}(t_{m,-1/2}^{1/2})^*\\t_{l,-1/2}^{1/2}(t_{m,1/2}^{1/2})^*&0\end{pmatrix}\ox \begin{pmatrix}s_{m,+}\\0\end{pmatrix}\\
&=\begin{pmatrix}0&(t^1_{2-j,-1})^*\\0&0\end{pmatrix}\ox\begin{pmatrix}0&0\\t^1_{2-j,-1}&0\end{pmatrix}\ox\begin{pmatrix}-b_is_{i,+}\\0\end{pmatrix}.
\end{align*}
Ignoring the kernel of $m$ again (which is orthogonal to the image of $1-\Psi$), using $\B$-linearity of the tensor product and the orthogonality relations gives
\begin{align*}
\nablar^{G}\ox1\circ\nablal^{\mathcal{S}^+}\begin{pmatrix}b_is_{i,+}\\0\end{pmatrix}&=\nablar^{G}([\D,b_it_{i,1/2}^{1/2}(t_{k,1/2}^{1/2})^*])\ox \begin{pmatrix}s_{k,+}\\0\end{pmatrix}\\
&=G\partial_e\partial_f(b_it_{i,1/2}^{1/2}(t_{k,1/2}^{1/2})^*)\ox \begin{pmatrix}s_{k,+}\\0\end{pmatrix}.
\end{align*}
Recalling that $\pairing{ C}{G}=0$, we see that this term does not contribute to the curvature, and applying $1-\Psi$ yields
\begin{align*}
R^{\nablal^{\mathcal{S}^+}}\begin{pmatrix}b_is_{i,+}\\0\end{pmatrix}
&=\frac{1}{\alpha}C\left\langle C,\begin{pmatrix}0&(t^1_{2-j,-1})^*\\0&0\end{pmatrix}\ox\begin{pmatrix}0&0\\t^1_{2-j,-1}&0\end{pmatrix}\right\rangle\ox\begin{pmatrix}-b_is_{i,+}\\0\end{pmatrix}\\
&=-\frac{1}{2}C\ox\begin{pmatrix}b_is_{i,+}\\0\end{pmatrix}.
\end{align*}
We continue with the negative spinors, computed much as the positive ones, and find
\begin{align*}
&1\ox\nablal^{\mathcal{S}^-}\circ\nablal^{\mathcal{S}^-}\begin{pmatrix}0\\c_ls_{l,-}\end{pmatrix}\\
&=c_l\begin{pmatrix} 0 & t_{l,1/2}^{1/2}(t_{m,-1/2}^{1/2})^*\\-t_{l,-1/2}^{1/2}(t_{m,1/2}^{1/2})^*&0\end{pmatrix}\ox \begin{pmatrix} 0 & t_{m,1/2}^{1/2}(t_{p,-1/2}^{1/2})^*\\-t_{m,-1/2}^{1/2}(t_{p,1/2}^{1/2})^*&0\end{pmatrix}\ox \begin{pmatrix} 0\\s_{p,-}\end{pmatrix}\\
\end{align*}
along with
\begin{align*}
&\nablar^{G}\ox1\circ\nablal^{\mathcal{S}^-}\begin{pmatrix}0\\c_ls_{l,-})\end{pmatrix}
=G\ox\begin{pmatrix}0\\ \partial_f\partial_e(c_l)s_{l,-}+q^{3/2}\partial_f(c_l)s_{l,+}+q^{1}c_ls_{l,-}\end{pmatrix}.
\end{align*}
Applying $1-\Psi$ and combining the $\mathcal{S}^-$ and $\mathcal{S}^+$ computations yields \eqref{eq:spinorcurvature}. Finally, observe that the action of $\sigma(C)$ is given by
\[
\frac{-2q^{-1}}{q^2+q^{-2}}\begin{pmatrix}q^{3}&0\\0&-q^{-3}\end{pmatrix}
\]
and so 
\[
c\circ\sigma\ox1(R^{\nablal^{\mathcal{S}}})=c\Big(\sigma(C)\ox\begin{pmatrix}-1/2&0\\0&q^2/2\end{pmatrix}\Big)=\frac{1}{q^2+q^{-2}}\begin{pmatrix}q^2&0\\0&q^{-2}\end{pmatrix},
\]
which completes the proof.
\end{proof}

Using that 
\begin{align*}
&\D^2\begin{pmatrix} b_is_{i,+}\\c_ls_{l,-}\end{pmatrix}
=\begin{pmatrix}q\partial_e\partial_f(b_i)s_{i,+}+q^{-3/2}\partial_e(b_i)s_{i,-}+b_is_{i,+}\\q^{-1}\partial_f\partial_e(c_l)s_{l,-}+q^{3/2}\partial_f(c_l)s_{l,+}+c_ls_{l,-}\end{pmatrix},
\end{align*}
the Weitzenbock formula now yields the explicit expression
\begin{align*}\Delta^{\mathcal{S}}&\begin{pmatrix} b_is_{i,+}\\c_ls_{l,-}\end{pmatrix}=\frac{1}{q^2+q^{-2}}\begin{pmatrix}q^{-2}&0\\0&q^{2}\end{pmatrix}\begin{pmatrix} b_is_{i,+}\\c_ls_{l,-}\end{pmatrix}+\begin{pmatrix}q\partial_e\partial_f(b_i)s_{i,+}+q^{-3/2}\partial_e(b_i)s_{i,-}\\q^{-1}\partial_f\partial_e(c_l)s_{l,-}+q^{3/2}\partial_f(c_l)s_{l,+}\end{pmatrix},
\end{align*}
for the connection Laplacian. Finally we show that the Laplacian is positive, completing the analogy with the usual Lichnerowicz formula on the sphere.
\begin{prop}
\label{prop:Lap-pos}
The Laplacian $\Delta^{\mathcal{S}}$ is positive. 
\end{prop}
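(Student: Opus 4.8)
The plan is to present $\Delta^{\mathcal{S}}$ as a Bochner-type Laplacian of the form $(\nablal^{\mathcal{S}})^*\nablal^{\mathcal{S}}$ and then appeal to \cite[Corollary 4.9]{MRC}, which states that the connection Laplacian attached to a \emph{Hermitian} connection is a positive operator. The task therefore reduces to checking that the hypotheses of that corollary hold here. The connection $\nablal^{\mathcal{S}}=\nablal^{\mathcal{S}^+}\oplus\nablal^{\mathcal{S}^-}$ is the left Grassmann connection of the frames $s_{\pm1/2,\pm}$, and Grassmann connections are always Hermitian \cite{MRLC}. The Hilbert space $L^2(S,h)$ is built from the faithful positive Haar state $h$, and conditions 1--4 of Definition \ref{def: Dirac-spectral-triple} have already been verified, so all structural inputs of \cite[Corollary 4.9]{MRC} are available.

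The mechanism behind the corollary is the integration-by-parts identity
\[
\langle\Delta^{\mathcal{S}}x,x\rangle_{L^2(S,h)}=\langle\nablal^{\mathcal{S}}x,\nablal^{\mathcal{S}}x\rangle,
\]
in which the right-hand side is the norm-square of $\nablal^{\mathcal{S}}x$ in the Hilbert-space completion of $\Omega^1_\D(\B)\ox_\B\mathcal{S}$. The Hermitian property of $\nablal^{\mathcal{S}}$ is exactly what converts the two covariant-derivative factors in the definition \eqref{eq:conn-lap} of $\Delta^{\mathcal{S}}$ into this self-pairing, with the weight $e^{-\beta}m(G)$ together with the bimodule inner products of $G$ reassembling into the $L^2$-inner product on one-form-valued spinors. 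Since the right-hand side is manifestly non-negative, positivity of $\Delta^{\mathcal{S}}$ follows at once.

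The point requiring care is that the twisting factor $e^{-\beta}m(G)=\tfrac{1}{q^2+q^{-2}}\left(\begin{smallmatrix}q&0\\0&q^{-1}\end{smallmatrix}\right)$ is not the identity for $q\neq1$, so one must be sure it does not destroy positivity. This is precisely what the abstract argument of \cite{MRC} organises: $e^{-\beta}m(G)$ is a positive central endomorphism that is absorbed into the inner product on $\Omega^1_\D(\B)\ox_\B\mathcal{S}$, leaving $\langle\nablal^{\mathcal{S}}x,\nablal^{\mathcal{S}}x\rangle$ a genuine, and hence non-negative, Hilbert-space norm-square. I expect this to be the main obstacle to a fully self-contained argument; invoking \cite[Corollary 4.9]{MRC} bypasses it. As a consistency check one could instead combine the Weitzenbock formula $\D^2=\Delta^{\mathcal{S}}+\tfrac{1}{q^{-2}+q^2}\left(\begin{smallmatrix}q^2&0\\0&q^{-2}\end{smallmatrix}\right)$ with the known spectrum of $\D$ to bound $\Delta^{\mathcal{S}}$ below, but the route through \cite[Corollary 4.9]{MRC} needs no spectral data and is therefore preferable.
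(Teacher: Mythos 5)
Your overall strategy --- reduce to \cite[Corollary 4.9]{MRC} --- is the one the paper uses, but there is a genuine gap: you assert that the hypotheses of that corollary are already in place once the connection is Hermitian and conditions 1--4 of Definition \ref{def: Dirac-spectral-triple} hold. That is not the case. The integration-by-parts identity
\[
\langle\Delta^{\mathcal{S}}x,x\rangle_{L^2(S,h)}=\langle\nablal^{\mathcal{S}}x,\nablal^{\mathcal{S}}x\rangle
\]
only holds \emph{up to a divergence (boundary) term}: the Hermitian property of $\nablal^{\mathcal{S}}$ moves one covariant derivative across the pairing at the cost of the term
$h\big(\pairing{\nablar^{G}({}_{\Omega^1}\pairing{\nablal^{\mathcal{S}} x}{y})}{G}\big)$,
and \cite[Section 4.4]{MRC} makes the vanishing of this term an explicit hypothesis for positivity. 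Classically this is Stokes' theorem; noncommutatively it must be verified by hand, and doing so is the entire content of the paper's proof. Your proposal never addresses it, and instead attributes the difficulty to the weight $e^{-\beta}m(G)$, which is in fact the harmless part of the argument.

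Concretely, what is missing is the computation showing that for every one-form $\omega=[\D,b]a$ one has
\[
h\Big(\Tr\Big(\begin{pmatrix} q&0\\0&q^{-1}\end{pmatrix}\,m\circ\nablar^{G}(\omega)\Big)\Big)=0 .
\]
Using the explicit formula \eqref{eq:right} for $\nablar^{G}$ and the identity $\partial_e\partial_f(b)=\partial_f\partial_e(b)$ on $\B$, one finds
$m\circ\nablar^{G}([\D,b]a)=\mathrm{diag}\big(q\,\partial_f(\partial_e(b)a),\,q^{-1}\partial_e(\partial_f(b)a)\big)$,
so the expression above equals $q^2h(\partial_f(\partial_e(b)a))+q^{-2}h(\partial_e(\partial_f(b)a))$, which vanishes because $h(t^\ell_{ij})=\delta_{\ell,0}$ forces $h\circ\partial_e=h\circ\partial_f=0$ on $\B$. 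Without this verification the appeal to \cite[Corollary 4.9]{MRC} is not justified, and the self-pairing identity you rely on is not established.
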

\begin{proof}
By \cite[Section 4.4]{MRC}, the result follows provided that for all $x,y\in S$, the divergence term $h(\pairing{\nablar^{G}({}_{\Omega^1}\pairing{\nablal^{\mathcal{S}} x}{y})}{G})$ vanishes where $h$ is the Haar state of $SU_q(2)$.

First we compute
\begin{align*}
&\pairing{\nablar^{G}({}_{\Omega^1}\pairing{\nablal^{\mathcal{S}} x}{y})}{G}\\
&=\Tr\Big(\begin{pmatrix} q&0\\0&q^{-1}\end{pmatrix}\Big(m\circ\nablar^{G}(\pairing{\nablal^{\mathcal{S}} x}{y})-m\circ\nablar^{G}(\omega_j)\pairing{\omega_j}{\pairing{\nablal^{\mathcal{S}} x}{y}}_\B\Big)\Big),
\end{align*}
and so it suffices to show that for all one-forms $\omega$ we have
\begin{equation}
h\Big(\Tr\Big(\begin{pmatrix} q&0\\0&q^{-1}\end{pmatrix}\Big(m\circ\nablar^{G}(\omega)\Big)\Big)\Big)=0.
\label{eq:div}
\end{equation}
Using \eqref{eq:right} we find that
\[
m\circ\nablar^{G}([\D,b]a)=\begin{pmatrix} q&0\\0&q^{-1}\end{pmatrix}
\partial_e\partial_f(b)a+\begin{pmatrix} \partial_e(b)\partial_f(a)&0\\0&\partial_f(b)\partial_e(a)\end{pmatrix}.
\]
Since $\partial_e\partial_f(b)=\partial_f\partial_e(b)$ for all $b\in\B$ we have
\[
\partial_e\partial_f(b)a=\partial_e(\partial_f(b)a)-q\partial_f(b)\partial_e(a)
=\partial_f(\partial_e(b)a)-q^{-1}\partial_e(b)\partial_f(a).
\]
Thus
\[
m\circ\nablar^{G}([\D,b]a)=\begin{pmatrix} q\partial_f(\partial_e(b)a)&0\\0&q^{-1}\partial_e(\partial_f(b)a)\end{pmatrix}
\]
and so
\[
h\Big(\Tr\Big(\begin{pmatrix} q&0\\0&q^{-1}\end{pmatrix}\Big(m\circ\nablar^{G}([\D,b]a)\Big)\Big)\Big)
=q^2h(\partial_f(\partial_e(b)a))+q^{-2}h(\partial_e(\partial_f(b)a)).
\]
Finally, $h(t^\ell_{ij})=\delta_{\ell,0}$, and so $h(\partial_e(c))=h(\partial_f(c))=0$ for all $c\in\B$, and hence the divergence
\eqref{eq:div} vanishes, and \cite[Corollary 4.9]{MRC} gives the positivity of $\Delta^{\mathcal{S}}$.
\end{proof}

\begin{rmk}
For this example the coincidence between the action of the spinor curvature and $1/4$
of the scalar curvature (which classically is $r=2$) holds only in the classical limit $q=1$.
\end{rmk}

\appendix

\section{Uniqueness of Levi-Civita connection}

In this appendix we present the uniqueness condition of Hermitian torsion-free $\sigma$-$\dag$-bimodule connections in the context of Grassmann connections associated to an exact frame. The discussion of \cite{MRLC} simplifies substantially in this setting, and in particular becomes more algebraic.

To  prove the uniqueness of the Hermitian torsion-free connection, we need to accomplish two tasks. First we need to prove the differential structure is concordant, which we describe and prove next. Then we need to verify an injectivity condition, stated below in Theorem \ref{thm:uniqueness}.

\begin{defn}
\label{ass:concordance}
Let $(\Omega^{1}_{\D}(\B),\dag,\Psi,\pairing{\cdot}{\cdot})$ be an Hermitian differential structure.
Define the projections $P:=\Psi\ox1$ and $Q:=1\ox\Psi$ on $T^{3}_{\D}(\B)$. The differential structure is concordant if
$T^{3}_{\D}=({\rm Im}(P)\cap{\rm Im}(Q))\oplus ({\rm Im} (1-P) + {\rm Im} (1-Q))$. Let $\Pi$ be the projection onto ${\rm Im}(P)\cap{\rm Im}(Q)$.
\end{defn}
Strictly speaking the Definition is stated for one-forms over a local algebra, and the polynomial algebra $\mathcal{O}(S^2_q)$ is not local. The completion of $\B$ in the norm $\B\ni b\mapsto\Vert b\Vert+\Vert[\D,b]\Vert$ is local, and we can likewise complete the module of one-forms. We will ignore this distinction below for notational simplicity.

Lemma \ref{lem:kernel} and Proposition \ref{prop:all-two-forms} give us a frame for the two-junk module
\[
Z=\frac{1}{\sqrt{q^2+q^{-2}}}G,\quad X_k,\quad Y_k,\quad k=-2,-1,0,1,2.
\]
The three projections on two-forms $P_Z=\ketbra{Z}{Z}$,
$P_X=\sum_k\ketbra{X_k}{X_k}$ and $P_Y=\sum_k\ketbra{Y_k}{Y_k}$ are mutually orthogonal
and the projection onto the junk submodule is $\Psi=P_Z+P_X+P_Y$.

\begin{lemma}
\label{lem:sub-projns}
The projections $P_Z,P_X,P_Y$ satisfy
\[
P_X\ox1\circ1\ox P_Y=P_Y\ox1\circ 1\ox P_X=0,
\]

\[
P_X\ox1\circ 1\ox P_Z\circ P_Z\ox1\circ 1\ox P_Y
=P_Y\ox1\circ 1\ox P_Z\circ P_Z\ox1\circ 1\ox P_X=0
\]
\[
P_X\ox1\circ 1\ox P_X\circ(\Psi\ox1\circ1\ox\Psi-P_X\ox1\circ 1\ox P_X-P_Y\ox1\circ 1\ox P_Y)=0
\]
\[
P_Y\ox1\circ 1\ox P_Y\circ(\Psi\ox1\circ1\ox\Psi-P_X\ox1\circ 1\ox P_X-P_Y\ox1\circ 1\ox P_Y)=0
\]
\[
P_Y\ox1\circ 1\ox P_Y\circ P_Z\ox1=P_X\ox1\circ 1\ox P_X\circ P_Z\ox1=0
\]
\[
P_X\ox1\circ 1\ox P_Z\circ  P_Y\ox1=P_Y\ox1\circ 1\ox P_Z\circ P_X\ox1=0
\]
\[
P_Z\ox1\circ1\ox P_Z\circ P_X\ox1=P_Z\ox1\circ1\ox P_Z\circ P_Y\ox1=0
\]
\[
(P_X\ox1\circ 1\ox P_X)^2=P_X\ox1\circ 1\ox P_X,\qquad
(P_Y\ox1\circ 1\ox P_Y)^2=P_Y\ox1\circ 1\ox P_Y,
\]
\[
(P_Z\ox1\circ 1\ox P_X)^2=\frac{q^2}{q^2+q^{-2}}(P_Z\ox1\circ 1\ox P_X),\quad (P_Z\ox1\circ 1\ox P_Y)^2=\frac{q^{-2}}{q^2+q^{-2}}(P_Z\ox1\circ 1\ox P_Y)
\]
\[
(P_Z\ox1\circ 1\ox P_Z)^2=\frac{1}{q^2+q^{-2}} P_Z\ox1\circ 1\ox P_Z.
\]
If we write $V_{AB}=P_A\ox1\circ1\ox P_B$ then
\[
\Psi\ox1\circ1\ox\Psi=V_{ZZ}+V_{ZX}+V_{XZ}+V_{ZY}+V_{YZ}+V_{XX}+V_{YY}
\]
and
\begin{align*}
(\Psi\ox1\circ1\ox\Psi)^n&=
\frac{1}{(q^2+q^{-2})^n}\big(V_{ZZ}+q^{2n}(V_{ZX}+V_{XZ})+q^{-2n}(V_{ZY}+V_{YZ})\big)
+V_{XX}+V_{YY}.
\end{align*}
Hence
\[
\Pi=\lim_{n\to\infty}(\Psi\ox1\circ1\ox\Psi)^n=P_X\ox1\circ 1\ox P_X+P_Y\ox1\circ 1\ox P_Y
\]
and the differential structure of the Podle\'{s} sphere is concordant.
\end{lemma}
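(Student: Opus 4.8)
The plan is to prove concordance by computing the strong limit $\Pi=\lim_{n\to\infty}(\Psi\ox1\circ1\ox\Psi)^n$ explicitly and recognising it as the projection onto ${\rm Im}(P)\cap{\rm Im}(Q)$. Everything rests on a multiplication table for the nine ``doubled'' operators $V_{AB}=P_A\ox1\circ1\ox P_B$ with $A,B\in\{X,Y,Z\}$; once this table is available, each displayed identity is either a single entry of the table or follows from it by a one-line manipulation, and the final limit is a geometric-series computation.

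First I would record the explicit frames underlying the three projections: $Z$ proportional to $G$, the vectors $X_k$ built from $E_{21}\ox E_{21}$ and the matrix coefficients $t^2_{k,2}$, $t^1_{2-j,1}$, and the $Y_k$ built from $E_{12}\ox E_{12}$ and $t^2_{k,-2}$, $t^1_{2-j,-1}$. Every product $V_{AB}\circ V_{CD}=(P_A\ox1)(1\ox P_B)(P_C\ox1)(1\ox P_D)$ reduces to the central composite $(1\ox P_B)(P_C\ox1)$, that is, to the overlap of $B$-type and $C$-type two-tensors in the middle slot. The vanishing relations are then a consequence of the grading by the circle action, under which $X,Z,Y$ sit in degrees $-2,0,+2$: combined with the orthogonality relations \eqref{eq:q-orthog}, the $X$ and $Y$ sectors are mutually orthogonal in every tensor slot, which forces $V_{XY}=V_{YX}=0$ and makes the longer mixed chains collapse. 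The idempotencies $V_{XX}^2=V_{XX}$, $V_{YY}^2=V_{YY}$ and the scalar relations such as $(V_{ZX})^2=\tfrac{q^2}{q^2+q^{-2}}V_{ZX}$ follow from the same overlap computation, the scalars $\tfrac{q^{\pm2}}{q^2+q^{-2}}$ being exactly the normalised pairings of $Z$ against $X_k$ and $Y_k$.

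With the table in hand I would expand $\Psi\ox1\circ1\ox\Psi=(P_X+P_Y+P_Z)\ox1\circ1\ox(P_X+P_Y+P_Z)$ into its nine terms, discard the two vanishing cross-terms, and obtain the seven-term sum $V_{ZZ}+V_{ZX}+V_{XZ}+V_{ZY}+V_{YZ}+V_{XX}+V_{YY}$. The power $(\Psi\ox1\circ1\ox\Psi)^n$ is then computed by induction on $n$: the relation $V_{XX}(\Psi\ox1\circ1\ox\Psi-V_{XX}-V_{YY})=0$ and its $Y$-analogue show that the two idempotent blocks $V_{XX},V_{YY}$ decouple from, and are reproduced independently of, the five $Z$-coupled terms, while those five terms feed into one another picking up the factors $(q^2+q^{-2})^{-1}$, $q^{\pm2}$ at each step. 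The main bookkeeping obstacle is exactly this induction: one must verify that the $Z$-coupled terms close up into the stated closed form $\tfrac{1}{(q^2+q^{-2})^n}\big(V_{ZZ}+q^{2n}(V_{ZX}+V_{XZ})+q^{-2n}(V_{ZY}+V_{YZ})\big)$, so I would arrange the multiplication identities precisely in the groupings that drive the inductive step.

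Finally, since $q^2+q^{-2}>1$ and $\tfrac{q^{\pm2}}{q^2+q^{-2}}<1$ for $q\neq1$, each coefficient in that first parenthesis tends to $0$, leaving $\Pi=V_{XX}+V_{YY}$. As $V_{XX}$ and $V_{YY}$ are mutually orthogonal idempotents taking values in ${\rm Im}(P)\cap{\rm Im}(Q)$, the limit $\Pi$ is a genuine projection, and by the standard two-projection theorem it is the projection onto ${\rm Im}(P)\cap{\rm Im}(Q)$; its complement ${\rm Im}(1-\Pi)$ equals ${\rm Im}(1-P)+{\rm Im}(1-Q)$ in the finite projective setting, giving the direct-sum decomposition of Definition \ref{ass:concordance} and hence concordance of the Podle\'{s} sphere differential structure.
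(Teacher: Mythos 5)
Your proposal follows the paper's own proof in essentially the same way: establish the multiplication table for the operators $V_{AB}=P_A\ox1\circ1\ox P_B$ (which the paper dismisses as ``simple if tedious verifications'', writing out only the $V_{ZZ}$ case explicitly), deduce the closed form for $(\Psi\ox1\circ1\ox\Psi)^n$, and pass to the limit to identify $\Pi=V_{XX}+V_{YY}$. The only divergence is the final step, where you appeal directly to the alternating-projection theorem to get concordance while the paper cites \cite[Proposition 4.13]{MRLC} for the passage from the norm-convergent limit projection to the direct-sum decomposition; these amount to the same argument.
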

\begin{proof}
The algebraic relations are all simple if tedious verifications.

To compute the limit $\lim_{n\to\infty}(P_Z\ox1\circ 1\ox P_Z)^n$, we first note that $\pairing{\rho^\dag}{\omega_j}_\B\omega_j^\dag=\rho$, which allows us to compute
\[
P_Z\ox1\circ 1\ox P_Z(\rho\ox\eta\ox\tau)=Ze^{-\beta/2}\ox\rho\pairing{\eta^\dag}{\tau}_\B e^{-\beta/2}. 
\]
Repeating we find
\[
(P_Z\ox1\circ 1\ox P_Z)^2(\rho\ox\eta\ox\tau)=Ze^{-3\beta/2}\ox\rho\pairing{\eta^\dag}{\tau}_\B e^{-3\beta/2}, 
\]
and as $e^{-\beta}=(q^2+q^{-2})^{-1}<1$, we readily see that 
$\lim_{n\to\infty}(P_Z\ox1\circ 1\ox P_Z)^n=0$.
The formula for $(\Psi\ox1\circ1\ox\Psi)^n$ is then just a consequence of the algebraic relations. The concordance is a consequence of \cite[Proposition 4.13]{MRLC}.
\end{proof}

We now turn to the injectivity condition.
In this paper we are concerned with the Hermitian torsion-free $\dag$-bimodule Grassmann connection $\nablar^G$ associated to the exact frame $(\omega_j)$. We rephrase the uniqueness condition of \cite[Theorem 5.14]{MRLC} for this particular case.
In order to do this we require the bimodule isomorphisms
\begin{align}
\alphar&:T^{3}_{\D}\to \overrightarrow{\textnormal{Hom}}^{*}_{\B}(\Omega^1_{\D},T^{2}_{\D}), \quad\alphar(\omega\otimes \eta\ox\tau)(\rho):=\omega\ox\eta\pairing{\tau^{\dag}}{\rho}\nonumber\\
\alphal&:T^{3}_{\D}\to \overleftarrow{\textnormal{Hom}}^{*}_{\B}(\Omega^1_{\D},T^{2}_{\D}), \quad\alphal( \eta\otimes \omega\ox\tau)(\rho):=\pairing{\rho^{\dag}}{\eta}\omega\ox\tau,
\label{eq:alphas}
\end{align}
where $\rho,\eta,\tau,\omega\in \Omega^1_{\D}$.

We denote by $\mathcal{Z}(M)$ the centre of a $\B$-bimodule $M$.
\begin{thm}
\label{thm:uniqueness}
Let  $(\Omega^{1}_{\D}(\B),\dag,\Psi,\pairing{\cdot}{\cdot})$ be the concordant Hermitian differential structure of the Podle\'{s} sphere. 
If the braiding $\sigma:T^{2}_{\D}\to T^{2}_{\D}$ is such that the map 
\[
\alphar+\sigma^{-1}\circ\alphal:\mathcal{Z}({\rm Im}(\Pi))\to\overleftrightarrow{{\rm Hom}}(\Omega^1_\D,T^2_\D)
\]
is injective,

then $\nablar^{G}$ is the unique Hermitian torsion-free $\sigma$-$\dag$-bimodule connection on $\Omega^{1}_{\D}(\B)$. 
\end{thm}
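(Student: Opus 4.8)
The plan is to deduce the statement as a specialisation of the general uniqueness result \cite[Theorem 5.14]{MRLC} to the explicitly described differential structure of the Podle\'{s} sphere, so that the work reduces to verifying that the hypotheses of that theorem hold here and translating its conclusion into the present notation. Existence is already settled: the Grassmann connection $\nablar^{G}$ of the exact frame $(\omega_j)$ is Hermitian and torsion-free, and by Proposition \ref{prop:call-this-a-prop?} it satisfies $\sigma\circ\nablar^{G}=\nablal^{G}$, so $(\nablar^{G},\sigma)$ is already a Hermitian torsion-free $\sigma$-$\dag$-bimodule connection. Only uniqueness is at issue.

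The key step is to analyse the difference of two such connections. Given any other Hermitian torsion-free $\sigma$-$\dag$-bimodule connection $\nablar'$, the map $A:=\nablar'-\nablar^{G}\colon\Omega^1_\D\to T^{2}_{\D}$ is right $\B$-linear. Since both connections are torsion-free, the equivalent condition $(1-\Psi)\circ\nablar=-\d$ gives $(1-\Psi)\circ A=0$, so $A$ takes values in the junk $JT^2_\D={\rm Im}(\Psi)$. The homogeneous form of the Hermitian identity together with the $\dag$-bimodule relation $\sigma\circ\nablar=\nablal=-\dag\circ\nablar\circ\dag$ then couple $A$ to its $\dag$-conjugate through $\sigma$. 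Via the bimodule isomorphisms $\alphar,\alphal$ of \eqref{eq:alphas}, these right- and left-module constraints identify the admissible $A$ with elements of the centre $\mathcal{Z}({\rm Im}(\Pi))$, where by Lemma \ref{lem:sub-projns} the projection onto ${\rm Im}(P)\cap{\rm Im}(Q)$ is $\Pi=P_X\ox1\circ1\ox P_X+P_Y\ox1\circ1\ox P_Y$.

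I would then invoke concordance to close the argument. Lemma \ref{lem:sub-projns} establishes exactly the direct-sum splitting of $T^{3}_{\D}$ required by the machinery of \cite[Section 5]{MRLC}, so that the space of admissible differences is identified with $\ker\bigl(\alphar+\sigma^{-1}\circ\alphal\bigr)$ restricted to $\mathcal{Z}({\rm Im}(\Pi))$. Under the stated injectivity hypothesis this kernel is trivial, forcing $A=0$ and hence $\nablar'=\nablar^{G}$. (As noted after Definition \ref{ass:concordance}, one works with the local completion of $\B$ throughout, which affects none of this.)

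The main obstacle is bookkeeping rather than conceptual: one must track carefully how the braiding $\sigma$ enters the $\dag$-bimodule constraint and verify that the combined Hermitian-plus-bimodule conditions assemble precisely into the single map $\alphar+\sigma^{-1}\circ\alphal$ acting on the centre of ${\rm Im}(\Pi)$. This assembly is carried out abstractly in \cite{MRLC}; the content of the present statement is therefore that the concordance of Lemma \ref{lem:sub-projns}, together with the $\dag$-bimodule structure of Proposition \ref{prop:call-this-a-prop?}, place us within the scope of \cite[Theorem 5.14]{MRLC}, so that uniqueness follows the moment injectivity is granted.
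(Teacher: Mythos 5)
Your proposal follows the same route as the paper: both reduce the statement to \cite[Theorem 5.14]{MRLC}, with concordance supplied by Lemma \ref{lem:sub-projns} and the injectivity hypothesis doing the rest. The only hypothesis of that theorem you do not explicitly discharge is $\dag$-concordance, which the paper observes is automatic for the Grassmann connection of an exact frame by \cite[Remark 4.31]{MRLC}; with that remark added, your argument matches the paper's proof.
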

\begin{proof}
Noting that $\dag$-concordance,\cite[Definition 4.30]{MRLC}, is automatic for a Grassmann connection of an exact frame, \cite[Remark 4.31]{MRLC}, the statement follows from \cite[Theorem 5.14]{MRLC}.
\end{proof}

To check the injectivity condition of Theorem \ref{thm:uniqueness}, we first prove a few simple lemmas that help us identify the centre of the module ${\rm Im}(\Pi)$.
\begin{lemma}
The commutant of the Podle\'{s} sphere $\B$ in 
$\A=\mathcal{O}(SU_q(2))$ is the scalar multiples of the identity.
\end{lemma}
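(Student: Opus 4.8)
The plan is to turn commutation with the single element $A$ into a grading condition, which collapses $x$ into a commutative polynomial subalgebra, and then to read off the commutant from commutation with $B$ alone. All of this takes place inside $\A=\mathcal{O}(SU_q(2))$, using only that $\A$ is an integral domain and that $q$ is not a root of unity.

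First I would record that the seven relations of $\mathcal{O}(SU_q(2))$ are homogeneous for the assignment $\deg a=-1$, $\deg d=+1$, $\deg b=\deg c=0$, i.e. $\deg=\#d-\#a$; the only relations needing a glance are $ad=1+qbc$ and $da=1+q^{-1}bc$, which are degree $0$ on both sides. Thus $\A=\bigoplus_{p\in\Z}\A^{[p]}$ is $\Z$-graded, $A\in\A^{[0]}$, and a short computation from the relations gives $Ay=q^{2p}yA$ for $y\in\A^{[p]}$ (for instance $aA=q^2Aa$ and $dA=q^{-2}Ad$). Now take $x\in\A$ in the commutant of $\B$; in particular $xA=Ax$. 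Writing $x=\sum_p x_p$ with $x_p\in\A^{[p]}$ and comparing graded components (note $x_pA\in\A^{[p]}$) yields $(1-q^{2p})x_pA=0$ for each $p$. Since $\A$ is a domain and $q^{2p}\neq1$ for $p\neq0$, this forces $x_p=0$ for $p\neq0$, so $x\in\A^{[0]}$. From the PBW basis $\{b^jc^kd^m\}\cup\{b^jc^ka^m\}$ one sees at once that $\A^{[0]}=\C[b,c]$, the commutative polynomial algebra on the commuting generators $b,c$.

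It then remains to impose $xB=Bx$ for $x=\sum_{i,j\ge0}c_{ij}b^ic^j$, using $B=-q^{-1}ab$. With $ba=q^{-1}ab$, $ca=q^{-1}ac$ and $bc=cb$ one gets $b^ic^j\,a=q^{-i-j}a\,b^ic^j$, hence $[b^ic^j,B]=-q^{-1}(q^{-i-j}-1)\,a\,b^{i+1}c^j$. The monomials $a\,b^{m}c^{j}$ with $m\ge1$ are PBW-independent, so $[x,B]=0$ gives $c_{ij}(q^{-i-j}-1)=0$ for all $i,j$; as $q$ is not a root of unity, $c_{ij}=0$ whenever $i+j>0$, i.e. $x=c_{00}\cdot1\in\C$. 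Conversely scalars are central, so the commutant is exactly $\C\cdot1$. (Note that only commutation with $A$ and $B$ was used.)

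The steps needing care, rather than a genuine obstacle, are verifying that $\deg=\#d-\#a$ is really a grading — in particular the homogeneity of the affine relations $ad,da$ — and observing that commutation with the single element $A$ already lands $x$ in the commutative subalgebra $\C[b,c]$; this is precisely what reduces the final step to a one-line linear-independence argument and avoids any representation-theoretic or Clebsch--Gordan computation. The only external facts invoked are that $\mathcal{O}(SU_q(2))$ is an integral domain with the stated PBW basis and that $q$ is not a root of unity.
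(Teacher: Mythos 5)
Your proof is correct and follows essentially the same route as the paper's: both first use commutation with $A$ (the paper uses $bb^*=q^2A$) to kill all powers of $a$ and $d$ in the PBW expansion, and then use commutation with $B$ (the paper uses $ab=-qB$) to eliminate the remaining monomials $b^ic^j$ with $i+j>0$. Your packaging of the first step as a $\Z$-grading argument with $\deg=\#d-\#a$ is a clean reformulation of the same computation, not a genuinely different method.
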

\begin{proof}
Every element of $\mathcal{O}(SU_q(2))$ is a linear combination of
$a^ib^j(b^*)^k$ and $(a^*)^ib^j(b^*)^k$ for $i,j,k\in\N$. Up to
scalar multiples, the generators of $\B$ are $bb^*, ab, b^*a^*$. For scalars $\lambda_{ijk},\rho_{ijk}$ we have
\begin{align*}
\big[\sum_{ijk}\lambda_{ijk}a^ib^j(b^*)^k+\rho_{ijk}(a^*)^ib^j(b^*)^k,&bb^*\big]
=\sum_{ijk}[\lambda_{ijk}a^i+\rho_{ijk}(a^*)^i,bb^*]b^j(b^*)^k\\
&=bb^*\sum_{ijk}\big(\lambda_{ijk}(q^{2i}-1)a^i+\rho_{ijk}(q^{-2i}-1)(a^*)^i\big)b^j(b^*)^k.
\end{align*}
The linear independence of the monomials $a^ib^j(b^*)^k$ and $(a^*)^ib^j(b^*)^k$ shows that the sum vanishes only when all coefficients $\lambda_{ijk},\rho_{ijk}$ vanish except perhaps $i=0$. Then consider
\begin{align*}
\Big[\sum_{jk}\lambda_{0jk}b^j(b^*)^k,ab\Big]
&=\sum_{jk}\big[\lambda_{0jk}b^j(b^*)^k,ab\big]=ab\sum_{jk}(q^{-j-k}-1)\lambda_{0jk}b^j(b^*)^k,
\end{align*}
and again the commutator vanishes only when all $\lambda_{0jk}=0$ except perhaps $j=k=0$. Hence only scalars in $\A$ commute with all of $\B$. The same conclusion holds for local and $C^*$-completions by approximation.
\end{proof}
Since $\Omega^1_\D(\B)\subset \mathcal{O}(SU_q(2))^{\oplus2}$ does not intersect the scalars we have
\begin{corl}
The centre of $\Omega^1_\D(\B)$ is $\{0\}$.
\end{corl}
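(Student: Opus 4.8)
The plan is to read off the triviality of the centre from two facts already in hand: the preceding commutant lemma and the degree bookkeeping recorded in Section \ref{sec:pods}. An element $\omega\in\Omega^1_\D(\B)$ lies in $\mathcal{Z}(\Omega^1_\D(\B))$ exactly when $b\omega=\omega b$ for every $b\in\B$, the bimodule structure being operator multiplication. Under the identification of $\Omega^1_\D(\B)$ with off-diagonal matrices in $M_2(\A)$, write $\omega=\big(\begin{smallmatrix}0&\omega_+\\\omega_-&0\end{smallmatrix}\big)$ with $\omega_\pm\in\A$. Since $\B$ consists of the degree-$0$ elements and acts diagonally on $S^+\oplus S^-$, its left and right actions are multiplication by the scalar matrix $b\,\Id_2=\mathrm{diag}(b,b)$. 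Hence $b\omega=\omega b$ decouples into the two equations $b\omega_+=\omega_+b$ and $b\omega_-=\omega_-b$ in $\A$, so each entry $\omega_\pm$ commutes with all of $\B$.

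First I would invoke the preceding lemma, which states that the commutant of $\B$ in $\A=\mathcal{O}(SU_q(2))$ is $\C\cdot 1$; this forces each $\omega_\pm$ to be a scalar multiple of the identity. Second, I would bring in the grading: the $E_{12}$-entry $\omega_+$ has degree $+1$ and the $E_{21}$-entry $\omega_-$ has degree $-1$ for the circle action defining $\B$, whereas the scalars $\C\cdot 1$ are homogeneous of degree $0$. A nonzero element cannot sit in two distinct homogeneous degrees, so $\omega_+=\omega_-=0$, giving $\omega=0$ and therefore $\mathcal{Z}(\Omega^1_\D(\B))=\{0\}$. This is exactly the content of the observation that $\Omega^1_\D(\B)\subset\A^{\oplus2}$ does not meet the scalars.

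Since the corollary is essentially immediate once these two ingredients are combined, I do not expect a substantial obstacle. The only point deserving care is the reduction of centrality to the componentwise relations $b\omega_\pm=\omega_\pm b$, i.e.\ confirming that in the chosen identification the $\B$-actions really are by the diagonal scalar matrix $\mathrm{diag}(b,b)$; this is precisely the diagonal, degree-$0$ nature of the representation set up in Section \ref{sec:pods}, after which the commutant lemma and the degree argument finish the proof.
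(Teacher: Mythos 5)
Your argument is correct and is essentially the paper's own: the paper likewise deduces the corollary from the preceding commutant lemma together with the observation that $\Omega^1_\D(\B)\subset\A^{\oplus 2}$ does not meet the scalars, which is exactly your degree-$\pm1$ versus degree-$0$ point. You merely spell out the (correct) reduction of centrality to the componentwise relations $b\omega_\pm=\omega_\pm b$, which the paper leaves implicit.
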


\begin{lemma}
\label{lem:X-Y}
The linear maps $\mathbb{X},\mathbb{Y}:T^{3}_\D\to\Omega^1_\D(\B)$ given on simple tensors $\rho\ox\eta\ox\tau\in T^{3}_\D$ by
\[
\mathbb{X}(\rho\ox\eta\ox\tau)=\sum_{k=-2,-2,0,1,2}(t^2_{k2})^*\pairing{X_k}{\rho\ox\eta}_\B\,\tau
\]
and
\[
\mathbb{Y}(\rho\ox\eta\ox\tau)=\sum_{k=-2,-2,0,1,2}(t^2_{k,-2})^*\pairing{Y_k}{\rho\ox\eta}_\B\,\tau
\]
are bimodule maps, and so map the centre $\mathcal{Z}T^{3}_\D$ to the centre
$\mathcal{Z}\Omega^1_\D(\B)=\{0\}$. 
\end{lemma}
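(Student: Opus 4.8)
The plan is to collapse both $\mathbb{X}$ and $\mathbb{Y}$ to manifestly bilinear expressions and then invoke triviality of the centre. First I would insert the explicit pairings from Lemma \ref{lem:kernel}: writing $\rho_\pm,\eta_\pm$ for the $E_{12}/E_{21}$ entries of the one-forms $\rho,\eta$, we have $\pairing{X_k}{\rho\ox\eta}_\B=qt^2_{k2}\rho_-\eta_-$ and $\pairing{Y_k}{\rho\ox\eta}_\B=q^{-1}t^2_{k,-2}\rho_+\eta_+$. The sums over $k$ then collapse by the orthogonality relations \eqref{eq:q-orthog}: with $l=2$ and $i=j=\pm2$ these give $\sum_k(t^2_{k2})^*t^2_{k2}=\sum_k(t^2_{k,-2})^*t^2_{k,-2}=1$, whence
\[
\mathbb{X}(\rho\ox\eta\ox\tau)=q\,\rho_-\eta_-\,\tau,\qquad \mathbb{Y}(\rho\ox\eta\ox\tau)=q^{-1}\rho_+\eta_+\,\tau .
\]

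In these reduced forms the bimodule property is transparent. Right $\B$-linearity holds because the right action is carried by the final factor $\tau$ and the pairings $\pairing{X_k}{\cdot}_\B,\pairing{Y_k}{\cdot}_\B$ are right $\B$-linear; this simultaneously shows compatibility with the balancing of $\ox_\B$ between the second and third legs, while compatibility inside the first two legs is inherited from the inner product on $T^2_\D$. For left $\B$-linearity I would use that left multiplication by $b\in\B$ acts entrywise, $(b\rho)_\pm=b\rho_\pm$, and that $b$ passes through the collapsed sum $\sum_k(t^2_{k2})^*t^2_{k2}=1$ unchanged, giving $\mathbb{X}(b\rho\ox\eta\ox\tau)=qb\rho_-\eta_-\tau=b\,\mathbb{X}(\rho\ox\eta\ox\tau)$ and the analogous identity for $\mathbb{Y}$. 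Thus $\mathbb{X},\mathbb{Y}$ are $\B$-bimodule maps.

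Finally, a $\B$-bimodule map sends central elements to central elements: for $\zeta\in\mathcal{Z}T^3_\D$ one has $b\,\mathbb{X}(\zeta)=\mathbb{X}(b\zeta)=\mathbb{X}(\zeta b)=\mathbb{X}(\zeta)\,b$ for every $b$, and likewise for $\mathbb{Y}$. The outputs are off-diagonal elements of $M_2(\mathcal{O}(SU_q(2)))$ whose entries $\rho_-\eta_-\tau_\pm$, respectively $\rho_+\eta_+\tau_\pm$, are homogeneous of nonzero degree for the circle action (degrees $-1,-3$ and $+3,+1$). By the commutant computation preceding the Corollary, any element of $\A$ commuting with all of $\B$ is a scalar multiple of the identity, and a nonzero scalar has degree $0$; hence no such off-diagonal element can be central unless it vanishes, exactly as recorded by $\mathcal{Z}\Omega^1_\D(\B)=\{0\}$. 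Therefore $\mathbb{X}(\mathcal{Z}T^3_\D)=\mathbb{Y}(\mathcal{Z}T^3_\D)=\{0\}$. I expect the only real care to lie in the degree bookkeeping — tracking which of $E_{12},E_{21}$ each leg contributes and verifying that $b$ may be commuted through the orthogonality sum before, rather than after, simplification — with everything else a routine application of \eqref{eq:q-orthog}.
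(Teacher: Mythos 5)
Your proposal is correct and follows essentially the same route as the paper: insert the explicit pairings from Lemma \ref{lem:kernel}, collapse the sum over $k$ via the orthogonality relations \eqref{eq:q-orthog} to get $q\rho_-\eta_-\tau$ (resp. $q^{-1}\rho_+\eta_+\tau$), check left and right $\B$-linearity directly, and conclude using the triviality of the centre established just before the lemma. (The paper's displayed computation drops the overall factor of $q$ at the last step, but as in your version this factor is harmless for the bimodule identity.)
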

\begin{proof}
We prove the result for $\mathbb{X}$ as the argument for $\mathbb{Y}$ is the same.
Let $\rho\ox\eta\ox\tau$ be a simple tensor and write $\rho=\big(\begin{smallmatrix}0&\rho_{+}\\ \rho_{-}&0\end{smallmatrix}\big)$ and similarly for $\eta,\tau$. For $b\in\B$ we have $\mathbb{X}(\rho\ox\eta\ox\tau b)=\mathbb{X}(\rho\ox\eta\ox\tau)b$ by definition. Using Lemma \ref{lem:kernel} we have
\begin{align*}
\mathbb{X}(b\rho\ox\eta\ox\tau)&=\sum_{k=-2,-2,0,1,2}(t^2_{k2})^*\pairing{X_k}{b\rho\ox\eta}_\B\,\tau=q\sum_{k=-2,-2,0,1,2}(t^2_{k2})^*t_{k2}^2b\rho_-\eta_-\,\tau\\
&=b\rho_-\eta_-\,\tau=b\mathbb{X}(\rho\ox\eta\ox\tau).
\end{align*}
Hence $\mathbb{X}$ is a bimodule map, and so maps centres to centres.
\end{proof}
\begin{corl}
\label{cor:trivcenter}
For $q\in(0,1)$ we have $\mathcal{Z}{\rm Im}(\Pi)=\mathcal{Z}\Pi(T^3_\D)=\{0\}$.
\end{corl}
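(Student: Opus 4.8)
The plan is to realise $\mathbb{X}$ and $\mathbb{Y}$ as a jointly injective pair of bimodule maps on ${\rm Im}(\Pi)$ that simultaneously annihilate every central element; triviality of $\mathcal{Z}{\rm Im}(\Pi)$ is then immediate. By Lemma \ref{lem:sub-projns} we have $\Pi=V_{XX}+V_{YY}$ with $V_{XX}=P_X\ox1\circ1\ox P_X$ and $V_{YY}=P_Y\ox1\circ1\ox P_Y$ mutually orthogonal bimodule projections, so ${\rm Im}(\Pi)={\rm Im}(V_{XX})\oplus{\rm Im}(V_{YY})$ as $\B$-bimodules, with ${\rm Im}(V_{XX})\subset X\ox_\B\Omega^{1}_{\D}(\B)$ and ${\rm Im}(V_{YY})\subset Y\ox_\B\Omega^{1}_{\D}(\B)$.

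First I would dispose of the centre. Let $\xi\in\mathcal{Z}{\rm Im}(\Pi)$. Because ${\rm Im}(\Pi)\subset T^{3}_{\D}$ is a sub-bimodule, the condition $b\xi=\xi b$ for all $b\in\B$ shows $\xi\in\mathcal{Z}T^{3}_{\D}$. Lemma \ref{lem:X-Y} then gives $\mathbb{X}(\xi)=\mathbb{Y}(\xi)=0$, since $\mathbb{X},\mathbb{Y}$ are bimodule maps into $\Omega^{1}_{\D}(\B)$ and the latter has trivial centre. This is the sole point where the hypothesis $q\in(0,1)$ is used: it enters through the commutant lemma (via $q^{2i}\neq1$ for $i\neq0$), which forces $\mathcal{Z}\Omega^{1}_{\D}(\B)=\{0\}$.

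Second I would establish the injectivity. On ${\rm Im}(V_{YY})$ the first two tensor legs are of $E_{12}$-type, so by Lemma \ref{lem:kernel} the pairing $\pairing{X_k}{\rho\ox\eta}_\B=qt^2_{k,2}\rho_-\eta_-$ vanishes and hence $\mathbb{X}$ is identically zero there; symmetrically $\mathbb{Y}$ vanishes on ${\rm Im}(V_{XX})$. On ${\rm Im}(V_{XX})\subset X\ox_\B\Omega^{1}_{\D}(\B)$ I would use the orthogonality relations \eqref{eq:q-orthog} to compute, for a simple tensor $X_m\ox\tau$, that $\pairing{X_k}{X_m}_\B=t^2_{k,2}(t^2_{m,2})^*$ and therefore $\mathbb{X}(X_m\ox\tau)=\sum_k(t^2_{k,2})^*t^2_{k,2}(t^2_{m,2})^*\tau=(t^2_{m,2})^*\tau$. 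The candidate left inverse $R(\psi):=\sum_k X_k\ox t^2_{k,2}\psi$ then satisfies $R(\mathbb{X}(X_m\ox\tau))=\sum_k X_k\,t^2_{k,2}(t^2_{m,2})^*\ox\tau=\sum_k X_k\pairing{X_k}{X_m}_\B\ox\tau=X_m\ox\tau$, the last step being the frame identity for $(X_k)$ from Lemma \ref{lem:kernel}. Thus $R\circ\mathbb{X}={\rm id}$ on $X\ox_\B\Omega^{1}_{\D}(\B)$, so $\mathbb{X}$ is injective on ${\rm Im}(V_{XX})$, and the mirror computation with $(Y_k)$ and $t^2_{k,-2}$ makes $\mathbb{Y}$ injective on ${\rm Im}(V_{YY})$.

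Putting the pieces together, for $\xi\in\mathcal{Z}{\rm Im}(\Pi)$ with decomposition $\xi=\xi_X+\xi_Y$, $\xi_X=V_{XX}\xi$, $\xi_Y=V_{YY}\xi$, the vanishing $0=\mathbb{X}(\xi)=\mathbb{X}(\xi_X)$ together with injectivity of $\mathbb{X}$ on ${\rm Im}(V_{XX})$ gives $\xi_X=0$, and likewise $\xi_Y=0$, whence $\xi=0$. I expect the main obstacle to be the injectivity step: it requires evaluating $\mathbb{X}$ correctly on the frame tensors $X_m\ox\tau$ sitting inside $T^{3}_{\D}$, collapsing the resulting double orthogonality sums, and confirming that $R$ returns the genuine element of $X\ox_\B\Omega^{1}_{\D}(\B)$ rather than some other frame representative. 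Once this reconstruction is in place, the remainder of the argument is formal.
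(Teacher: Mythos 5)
Your proposal is correct and follows essentially the same route as the paper: both arguments use Lemma \ref{lem:X-Y} to show that $\mathbb{X}$ and $\mathbb{Y}$ annihilate any central element, and then show that $\mathbb{X},\mathbb{Y}$ jointly detect ${\rm Im}(\Pi)=\mathrm{Im}(V_{XX})\oplus\mathrm{Im}(V_{YY})$ via the frames $X_k,Y_k$ and the orthogonality relations. The only difference is packaging: the paper verifies joint injectivity by writing out $\Pi(\rho\ox\eta\ox\tau)$ explicitly and observing it vanishes precisely when $\sum\rho_{-}\eta_{-}\tau_{-}$ and $\sum\rho_{+}\eta_{+}\tau_{+}$ do, whereas you exhibit explicit left inverses on each summand --- the same computation in a slightly different order.
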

\begin{proof}
Let $\rho\ox\eta\ox\tau$ be a simple tensor and again write $\rho=\big(\begin{smallmatrix}0&\rho_{+}\\ \rho_{-}&0\end{smallmatrix}\big)$ and similarly for $\eta,\tau$. Then, using the orthogonality relations, the projection onto ${\rm Im}(\Pi)$ is
\begin{align*}
\Pi(\rho\ox\eta\ox\tau)
&=
\begin{pmatrix} 0&0\\ \rho_{-}&0\end{pmatrix}\ox\begin{pmatrix} 0&0\\ \eta_{-}&0\end{pmatrix}\ox\begin{pmatrix} 0&0\\ \tau_{-}&0\end{pmatrix}
+
\begin{pmatrix} 0&\rho_{+}\\ 0&0\end{pmatrix}\ox\begin{pmatrix} 0&\eta_{+}\\ 0&0\end{pmatrix}\ox\begin{pmatrix} 0&\tau_{+}\\ 0&0\end{pmatrix}\\
&=\sum_{j,k=-1,0,1}\begin{pmatrix} 0&0\\ (t^1_{j1})^*&0\end{pmatrix}\ox\begin{pmatrix} 0&0\\ t^1_{j1}\rho_{-}\eta_{-}\tau_{-}(t^1_{k,-1})^*&0\end{pmatrix}\ox\begin{pmatrix} 0&0\\ t^1_{k,-1}&0\end{pmatrix}\\
&+
\sum_{j,k=-1,0,1}\begin{pmatrix} 0&(t^1_{j,-1})^*\\ 0&0\end{pmatrix}\ox\begin{pmatrix} 0&t^1_{j,-1}\rho_{+}\eta_{+}\tau_{+}(t^1_{k1})^*\\ 0&0\end{pmatrix}\ox\begin{pmatrix} 0&t^1_{k1}\\ 0&0\end{pmatrix}.
\end{align*}
Hence a linear combination $\Pi(\sum_i\rho^i\ox\eta^i\ox\tau^i)$ is zero if and only if
both $\sum_i\rho^i_-\eta^i_-\tau^i_-=0$ and $\sum_i\rho^i_+\eta^i_+\tau^i_+=0$. These observations together with Lemma \ref{lem:X-Y} prove the claim.
\end{proof}
\begin{thm}
\label{thm:unique-pods}
The $\dag$-bimodule connection $(\nablar^G,\sigma)$ is the unique Hermitian torsion-free $\dag$-bimodule connection on $\Omega^{1}_{\D}(\B)$.
\end{thm}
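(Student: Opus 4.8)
The plan is to deduce this directly from the uniqueness criterion of Theorem \ref{thm:uniqueness}, using the fact that nearly every hypothesis has already been verified in the body of the paper. First I would assemble the ingredients: the differential structure of the Podle\'{s} sphere is concordant by Lemma \ref{lem:sub-projns}, the pair $(\nablar^{G},\sigma)$ is a $\dag$-bimodule connection by Proposition \ref{prop:call-this-a-prop?}, and $\nablar^{G}$ is Hermitian and torsion-free by the earlier existence theorem. Since the frame $(\omega_j)$ is exact, $\dag$-concordance is automatic by \cite[Remark 4.31]{MRLC}, so all the standing assumptions of Theorem \ref{thm:uniqueness} are in force.

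With these in place, Theorem \ref{thm:uniqueness} reduces the uniqueness assertion to a single injectivity statement, namely that the map
\[
\alphar+\sigma^{-1}\circ\alphal:\mathcal{Z}({\rm Im}(\Pi))\to\overleftrightarrow{{\rm Hom}}(\Omega^1_\D,T^2_\D)
\]
is injective, where $\alphar,\alphal$ are the bimodule isomorphisms of \eqref{eq:alphas} and $\Pi$ is the concordance projection computed in Lemma \ref{lem:sub-projns}. The key observation is that the domain of this map collapses: by Corollary \ref{cor:trivcenter} we have $\mathcal{Z}({\rm Im}(\Pi))=\{0\}$. Consequently the map $\alphar+\sigma^{-1}\circ\alphal$ has trivial domain and is therefore injective for purely formal reasons, independently of any detailed structure of $\sigma$.

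The substantive content, and hence what I regard as the true obstacle, has already been discharged upstream in proving Corollary \ref{cor:trivcenter}: the center of ${\rm Im}(\Pi)$ is trivial because the commutant of $\B$ in $\mathcal{O}(SU_q(2))$ consists only of scalars, because $\Omega^1_\D(\B)$ has trivial center, and because the bimodule maps $\mathbb{X},\mathbb{Y}$ of Lemma \ref{lem:X-Y} force any central element of $T^3_\D$ lying in ${\rm Im}(\Pi)$ to vanish. Thus the final step is simply to invoke Theorem \ref{thm:uniqueness} with the injectivity condition satisfied trivially, concluding that $(\nablar^{G},\sigma)$ is the unique Hermitian torsion-free $\sigma$-$\dag$-bimodule connection on $\Omega^{1}_{\D}(\B)$. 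I would emphasize in the write-up that the triviality of the center is exactly the feature that places this example outside the scope of \cite{BGJ2}, yet makes the uniqueness argument especially clean here.
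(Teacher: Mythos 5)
Your proposal follows the paper's proof almost verbatim for $q\in(0,1)$: you assemble concordance from Lemma \ref{lem:sub-projns}, the $\dag$-bimodule connection property from Proposition \ref{prop:call-this-a-prop?}, the automatic $\dag$-concordance for the Grassmann connection of an exact frame, and then reduce everything to the injectivity hypothesis of Theorem \ref{thm:uniqueness}, which is vacuously satisfied because the domain $\mathcal{Z}({\rm Im}(\Pi))$ vanishes. That part is correct and is exactly the paper's argument.

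The gap is the classical point $q=1$, which the theorem (as the paper proves it) is meant to cover. Corollary \ref{cor:trivcenter} is deliberately stated only for $q\in(0,1)$: its proof ultimately rests on the lemma that the commutant of $\B$ in $\mathcal{O}(SU_q(2))$ consists of scalars, and that lemma's computation hinges on the non-vanishing of the factors $q^{2i}-1$ and $q^{-j-k}-1$, which fails when $q=1$. Indeed, for $q=1$ the algebra $\mathcal{O}(SU(2))$ is commutative, every element of every bimodule is central, and $\mathcal{Z}({\rm Im}(\Pi))$ is far from trivial, so the ``trivial domain'' argument collapses there. The injectivity of $\alphar+\sigma^{-1}\circ\alphal$ at $q=1$ therefore requires a genuine verification rather than a formal one; the paper supplies it by invoking \cite[Lemma 6.11]{MRLC}. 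To complete your write-up you should either restrict the statement to $q\in(0,1)$ explicitly, or add the separate $q=1$ case.
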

\begin{proof} We need to show that for $q\in(0,1]$ 
the linear map
$$\alphar+\sigma^{-1}\circ\alphal:\mathcal{Z}{\rm Im}(\Pi)\to \overleftrightarrow{{\rm Hom}}(\Omega^1_\D,T^2_\D),$$ is injective. For $q\in (0,1)$ this follows from Corollary \ref{cor:trivcenter}.  For $q=1$, \cite[Lemma 6.11]{MRLC} shows that $\alphar+\sigma^{-1}\circ\alphal$ is injective.
\end{proof}


\begin{thebibliography}{9999}

\bibitem[AK]{AK} K. Aguilar, J. Kaad,  {\em The Podle\'{s} sphere as a spectral metric space}, J. Geom. Phys., {\bf 133} (2018), 260--278. 

\bibitem[BM2]{BMBook} E. Beggs, S. Majid, {\em Quantum Riemannian Geometry}, 1st ed., Springer Berlin, 2020.


\bibitem[BGJ2]{BGJ2} J. Bhowmick, D. Goswami, S. Mukhopadhyay,
{\em Levi-Civita connections for a class of spectral triples}, Lett. Math. Phys. {\bf 110} (No. 2) (2020), 835--884.

\bibitem[BGJ1]{BGJ1} J. Bhowmick, D. Goswami, S. Joardar,
{\em A new look at Levi-Civita connection in noncommutative geometry}, Int. J. Geom. Mth. Mod. Phys. {\bf 18} (No. 7) (2021), 2150105.



    


\bibitem[C]{BRB} A. Connes, {\em Noncommutative Geometry},  Academic Press, 1994.



\bibitem[DHLS]{DHLS} L. Dabrowski, P. Hajac, G. Landi, P. Siniscalco, 
{\em Metrics and pairs of left and right connections on bimodules},
J. Math. Phys. {\bf 37}, no. 9, (1996), 4635--4646.

\bibitem[DS]{DS} L. Dabrowski, A. Sitarz, {\em Dirac operator on the standard 
Podle\'s sphere}, Banach Centre Publ. {\bf 61} (2003), 49--58.

\bibitem[DMM]{DMM} M. Dubois-Violette, J. Madore, T. Masson, {\em On
curvature in noncommutative geometry}, J. Math. Phys. {\bf 37}, no. 8, (1996), 4089--4102.

\bibitem[DM]{DM} M. Dubois-Violette, P. Michor, {\em Connections on central bimodules}, J. Geom. Phys., {\bf 20} (1996), 218--232.




\bibitem[FL02]{FL02} M. Frank, D. Larson, {\em Frames in Hilbert $C^*$-modules and $C^*$-algebras}, J. Operator Th., {\bf 48} (2) (2002), 273--314.


\bibitem[H05]{H05} D. Huybrechts, {\em Complex Geometry, An Introduction}, Springer-Verlag, 2005.

\bibitem[KPW]{KajPinWat} T. Kajiwara, C. Pinzari, Y. Watatani, 
{\em Jones index theory for
    Hilbert $C^*$-bimodules and its equivalence 
    with conjugation theory,} J. Funct. Anal. {\bf
    215} (2004), 1--49.



\bibitem[KS]{KS} A. Klimyk, K. Schm\"{u}dgen, {\em Quantum Groups and Their Representations},
Springer, 1997.

\bibitem[KRS]{KRS} U. Kr\"ahmer, A. Rennie, R. Senior, {\em A residue formula for the fundamental Hochschild 3-cocycle for $SU_q(2)$}, 
J. Lie Theory, {\bf 22} (no. 2) (2012), 557--585.

\bibitem[KW]{KW} U. Kr\"{a}hmer, E. Wagner, {\em A residue formula for the fundamental Hochschild class on the Podle\'s sphere}, 
J. $K$-Theory, {\bf 12} no. 2, (2010), 257--271.

\bibitem[KLS]{KLS}    M. Khalkhali, G. Landi, W. van Suijlekom, {\em Holomorphic structures
on the quantum projective line}, Int. Math. Res. Notes., {\bf 2011} (No. 4), (2011), 851--884.



\bibitem[Lance]{Lance} E.C. Lance, \textit{Hilbert $C^*$-modules, A toolkit
    for operator algebraists}, Cambridge University Press, Cambridge, 1995, x+130.
    

\bibitem[Lan]{Landi} G. Landi, {\em An Introduction to noncommutative spaces and their geometries}, Lecture Notes in Physics: Monographs, m51, Springer-Verlag, Berlin Heidelberg, 1997.

\bibitem[LM]{LM} H. B. Lawson, M. L. Michelsohn, \textit{Spin Geometry},
Princeton Univ. Press, Princeton, NJ, 1989.


\bibitem[LRV]{LRV} S. Lord, A. Rennie, J. V\'arilly, {\em Riemannian manifolds in 
noncommutative geometry}, J. Geom. Phys. {\bf 62} (2012), 1611--1638.


\bibitem[M]{Maj-pods} S. Majid, {\em Noncommutative Riemannian and spin geometry of the standard $q$-sphere}, Commun.  Math. Phys. {\bf 256} no. 2, (2005), 255--285.

\bibitem[MRLC]{MRLC} B. Mesland, A. Rennie, {\em Existence and uniqueness of the Levi-Civita connection on noncommutative differential forms}, arXiv:2403.13735.

\bibitem[MRC]{MRC} B. Mesland, A. Rennie, {\em Curvature and Weitzenbock formula for spectral triples}, arXiv:2404.07957.


\bibitem[MRS]{MRS} B. Mesland, A. Rennie, W. van Suijlekom, {\em Curvature for differentiable Hilbert modules and Kasparov modules}, Adv. Math., {\bf 402}  (2022), 108128.

\bibitem[NT]{NT}
S. Neshveyev, L. Tuset, {\em A local index formula for the quantum
sphere}, Comm. Math. Phys., {\bf 254} (2005), 323--341.



\bibitem[RS]{RS} A. Rennie, R. Senior, {\em The resolvent cocycle in twisted cyclic cohomology and a local index formula for the Podle\'s sphere}, J.
Noncommut. Geom., {\bf 8} (no. 1) (2014), 1--43.

\bibitem[SW]{SW} K. Schm\"udgen, E. Wagner, {\em Dirac operator 
and a twisted cyclic cocycle
on the standard Podle\'s quantum sphere}, 
J. Reine Angew. Math. {\bf 574} (2004), 219--235.


\bibitem[S]{S} R. Senior, {\em Modular Spectral Triples and 
KMS States in Noncommutative Geometry}, PhD thesis, ANU, 2011.

\bibitem[W]{W} E. Wagner  {\em On the noncommutative spin geometry of the standard Podle\'s sphere and index computations},
J. Geom. Phys. {\bf 59}  (2009), 998--1016.


\end{thebibliography}
\end{document}